\theoremstyle{plain}
\newtheorem{theorem}{Theorem}
\newtheorem*{theorem*}{Theorem}
\newtheorem{proposition}[theorem]{Proposition}
\newtheorem{lemma}[theorem]{Lemma}
\newtheorem*{lemma*}{Lemma}
\newtheorem{thm}{Theorem}[section]
\newtheorem*{lem*}{Lemma}
\theoremstyle{definition}
\newtheorem{definition}[thm]{Definition}
\theoremstyle{remark}
\newtheorem{remark}[thm]{Remark}
\newtheorem{example}[thm]{Example}
\crefname{theorem}{Theorem}{Theorems}						
\crefname{theoremintro}{Theorem}{Theorems}				
\newcommand{\Tor}{\mathrm{Tor}}
\definecolor{ggreen}{rgb}{0.2, 0.57, 0.1}
\newcommand{\R}{\mathbb{R}}
\newcommand{\diff}{d}
\newcommand{\de}{\,{\rm d}}											
\newcommand{\p}{\partial}												
\newcommand{\g}{\gamma}
\newcommand{\eps}{\varepsilon}
\newcommand{\la}{\langle}
\newcommand{\ra}{\rangle}
\newcommand{\sr}{sub-{R}ie\-man\-nian }				
\newcommand{\distr}{\mathcal{D}}								
\newcommand{\B}{\beta}
\newcommand{\lift}[1]{\overline{#1}} 							
\newcommand{\mg}{\B_1\nu_1\wedge \omega
										+\B_2\nu_2\wedge \omega}	
\newcommand\restr[2]{{													
  \left.\kern-\nulldelimiterspace 					
  #1 																			
  \right|_{#2} 														
  }}
\title{ magnetic flows on 3D contact Sub-Riemannian\\ manifolds
via the Rumin Complex 
  }
\date{\today}
\subjclass[2010]{53C17, 49J15}
\author{Davide Barilari}
\email{\href{mailto:davide.barilari@unipd.it}{davide.barilari@unipd.it}}
\author{Tania Bossio}
\address{University of Fribourg, Chemin du Musée 23, 1700 Fribourg, Switzerland}
\address{Dipartimento di Matematica ``Tullio Levi-Civita'', Universit\`a degli Studi di Padova}
\email{\href{mailto:tania.bossio@unipd.it}{tania.bossio@unipd.it}}
\author{Valentina Franceschi}
\email{\href{mailto:valentina.franceschi@unipd.it}{valentina.franceschi@unipd.it}}
\begin{document}

\begin{abstract}

 We show that the appropriate  notion of magnetic field on three-dimensional contact sub-Riemannian manifolds is given by a closed Rumin differential two-form. 
We introduce horizontal magnetic flows starting from magnetic potential one-forms, proving that the flow depends only on the Rumin differential of the potential. Notably, in dimension three the Rumin differential acts on one-forms as a second-order differential operator.


We further prove that such magnetic flows can be interpreted as a geodesic flow on a suitably lifted sub-Riemannian structure, which is of Engel type when the magnetic field is non-vanishing.
In the general case, when the magnetic field may vanish, we analyze the geometry of the lifted structure, characterizing its step and abnormal trajectories in terms of the analytical properties of the magnetic field.

Our work is inspired by the classical correspondence, first observed by Montgomery in \cites{Mon90,MonZero}, between Riemannian magnetic flows and sub-Riemannian geometry.

\end{abstract}

\maketitle
\tableofcontents

\section{Introduction}
\label{s:intro}

The correspondence between magnetic flows on Riemannian manifolds and sub-Riemannian geometry is nowadays well established. It was clearly demonstrated by Montgomery in his seminal work \emph{``Hearing the zero locus of a magnetic field''} \cite{MonZero}, where he studies the asymptotics of a two-dimensional quantum particle in a magnetic field and reinterprets its quantum dynamics in terms of a suitable sub-Riemannian Laplacian on a lifted space (see also \cite[Chapter~12]{MonBook} and the earlier works \cites{Mon90,Mon94}). Inspired by these developments, in this paper we focus on the classical dynamics perspective, initiating a systematic study of magnetic flows on sub-Riemannian manifolds. 

The simplest instance of the correspondence just mentioned permits to reinterpret the geodesic flow on the sub-Riemannian Heisenberg group as the lift of the magnetic flow of a charged particle moving in the Euclidean plane under the influence of a constant magnetic field -- a construction we now recall (cf.\ \cite[Section~4.4.2]{thebook}).

\smallskip
The Heisenberg group is nowadays recognized as the prototype model of \sr geometry; i.e., spaces where the distance is computed minimizing the length of curves that are tangent to a suitable non-integrable vector distribution.  In the specific case of the Heisenberg group, which is topologically $\R^{3}$ with coordinates $(x,y,z)$, the vector distribution $\distr$ is given by
 \begin{equation} \label{eq:intro0}
	\distr=\ker \omega,\qquad \omega=dz-\frac{1}{2}(xdy-ydx). 
 \end{equation}
The length of a curve tangent to $\distr$ (also said \emph{horizontal}) is set to be equal to the Euclidean length of its projection onto the $(x,y)$ plane. Length-minimizers among horizontal curves joining two fixed points always exist: these are curves in $\R^{3}$ whose projection onto the $(x,y)$ plane are circles or lines and whose vertical coordinate is recovered by imposing the relation \eqref{eq:intro0}, namely
 \begin{equation} \label{eq:intro00}
	 z(t)-z(0)=\frac{1}{2}\int_{0}^{t} \big( x(\tau)\dot y(\tau)-y(\tau)\dot x(\tau) \big) \, d\tau. 
 \end{equation}

It is possible to reinterpret such geodesic flow on the \sr Heisenberg group as the lift of the motion of a charged particle confined to the \((x,y)\) plane, subject to a constant vector magnetic field \(\vec{B}\) oriented positively along the \(z\)-axis.  
Indeed, according to the classical Lorentz force law,  
$F = q (\mathbf{v} \times \vec{B})$,
such a particle moves with an acceleration in the \((x,y)\)  plane which is perpendicular to the magnetic field direction. The particle then follows a circular trajectory in the plane with curvature proportional to the charge $q$; if the particle is not charged then the trajectory is a straight line in the direction of $\mathbf{v}$.

The relation \eqref{eq:intro00} can be interpreted in magnetic terms: its right-hand side represents the integral of the differential 1-form  
$A = \frac{1}{2} (x dy - y dx),$
treated as a magnetic potential, over the planar curve. The exterior derivative of the magnetic potential,  
$\beta = dA = dx \wedge dy,$
corresponds to the magnetic field, viewed as a differential 2-form. In fact, the magnetic 2-form \(\beta\) is closed and is dual to the vector field \(\vec{B}\) under the standard isomorphism between 2-forms and vector fields in \(\mathbb{R}^{3}\).

\smallskip
More generally a \emph{magnetic field} on a two-dimensional Riemannian surface $(N,g)$ is represented by a differential 2-form $\beta\in \Omega^{2}(N)$ which is closed, i.e., satisfies $d\beta=0$.
Assuming that $\beta=dA$ for some \emph{magnetic potential} $A\in \Omega^{1}(N)$, then the motion of a particle with charge $q\in \R$ on $N$ under the action of the corresponding magnetic field is obtained by minimizing the action associated with the following Lagrangian 
\begin{equation}
\label{eq:lagrintro}
L(p,v)=\frac{1}{2}\|v\|^{2}_{g}-  q\langle A(p), v\rangle,\qquad p\in N, v\in T_{p}N.
\end{equation}
 where the brackets represent the duality between covectors and vectors. The corresponding Hamiltonian, denoting $g^{*}$ the cometric on $T^{*}N$,  is given by
 \begin{equation}
\label{eq:Ham0}
H(p,\xi)=\frac{1}{2}\| \xi+qA(p)\|^{2}_{g^{*}},\qquad p\in N,\, \xi \in T^{*}_{p}N.
\end{equation}
Magnetic geodesics with charge $q$ satisfy the following equation with respect to the Levi-Civita connection $\nabla$
\begin{equation}\label{eq:Omega}
\nabla_{\dot \gamma}\dot \gamma= q\, b(\dot \gamma),
\end{equation}
where $b$ is the skew-symmetric operator defined by the identity $\beta(v,w)=g(v,b(w))$. 
Since $b$ is skew-symmetric and $N$ is 2-dimensional, then $b(\dot \gamma)$ is a scalar multiple of $\dot \gamma^{\perp}$, the unit vector orthogonal to $\dot \gamma$ in $N$. Then the relation \eqref{eq:Omega} can also be rewritten in scalar terms:  taking the inner product in both sides of \eqref{eq:Omega} with $\dot \gamma^{\perp}$ and using that $k_{g}(\gamma)=g(\nabla_{\dot \gamma}\dot \gamma,\dot \gamma^{\perp})$ is the (signed) geodesic curvature of $\gamma$ we have
\begin{equation}\label{eq:Omega2}
k_{g}(\gamma)= q\, b(\dot \gamma),
\end{equation}
where now the right-hand side is treated as a scalar.  Recall that the  (unsigned) geodesic curvature appears as a metric invariant satisfying (here $d_{g}$ is the Riemannian distance on $N$)
 \begin{equation}\label{eq:db16}
d_{g}^{2}(\gamma(t+\eps),\gamma(t))=\eps^{2}-\frac{k^{2}_{g}(\gamma)}{12}\eps^{4}+o(\eps^{4}).
\end{equation}

 Similarly to the construction described above, the magnetic motion on $N$ can be lifted to a motion in the extended space $\overline N=N\times \R$, imposing that the curve is tangent to the distribution
 \begin{equation} \label{eq:intro1}
	\distr=\ker \omega,\qquad \omega=dz-\pi^{*}A.
 \end{equation}
Here $\pi:\overline N \to N$ denotes the canonical projection, and the length of a curve tangent to $\distr$ is set to be equal to the Riemannian length of its projection onto $N$.
The distribution $\distr$ has rank two, and, by definition, it is a contact distribution at points where 
 the differential 
$d\omega|_{\distr}$ 
 is non-degenerate. 
Due to $d\omega=-\pi^{*}\beta$, this is in fact equivalent for the magnetic field $\B$ to be non-vanishing on $N$. 
One can prove (cf.\ for instance \cite{AG99}, see also \cite[Sec.\ 4.4]{thebook}) that, under this assumption, magnetic trajectories on $N$ with charge $q$ coincide with the projection of normal extremals for the \sr problem  which has $q$ as vertical part of the covector.

 Let $\mathcal{Z}\subset N$ be the set of points where the magnetic field $\B$ vanishes. The lifted \sr structure has step higher than 2 at $\mathcal{Z}\times \R\subset \lift N$,
 the lift of the zero locus that is also called \emph{Martinet set}. 
Moreover, any horizontal curve contained in the Martinet set is an abnormal extremal for the \sr structure, see~\cite[Lemma~4.43]{thebook}. 

For a more explicit construction of the lifted \sr structure, let us consider a local orthonormal frame $X_{1},X_{2}$ for the Riemannian metric $g$ on $N$. Then the lifted distribution \eqref{eq:intro1} in $\lift N$ is spanned by the vector fields
\begin{equation}
Y_{1}=X_{1}+ A(X_{1})\partial_z,\qquad Y_{2}=X_{2}+ A(X_{2})\partial_z,
\end{equation}
where now we identify $T\lift N$ with $TN\times \R$ and we denote by $z$ the new variable. By construction, the vector field $[X_{1},X_{2}]$ is tangent to $N$ so let us write $[X_{1},X_{2}]=c_{1}X_{1}+c_{2}X_{2}$ for suitable smooth functions $c_{1},c_{2}\in C^{\infty}(N)$. It is easy to see using  $dA=\beta$ that
 $$[Y_{1},Y_{2}]=c_{1}Y_{1}+c_{2}Y_{2}+\beta(X_{1},X_{2})\partial_{z}.$$
hence $[Y_{1},Y_{2}]$ is linearly independent from $\distr$ at points where the magnetic field $\beta$ does not vanish, and $\distr$ has step 2. More in general, one can prove that the distribution is bracket generating of step $k+2$ if $k\in \mathbb N$ is the smallest integer such that there exists a derivative $\partial^{\alpha}\beta$ which is non-vanishing, with $\alpha$ a multi-index satisfying $|\alpha|=k$ (cf. \cite[Exercise~4.44]{thebook}). We refer to \cite[Chapter~12]{MonBook} for a general discussion of magnetic fields on higher dimensional Riemannian manifolds (see also \cite{LZ11} for the relation with the corresponding sub-Riemannian structure).

\subsection{Magnetic fields on \sr manifolds}
The goal of this paper is to investigate magnetic fields on \sr manifolds $M$. More specifically we are interested into 
\begin{itemize}
\item[(i)] formalizing what is a magnetic field on a \sr manifold $M$, 
\item[(ii)] understanding magnetic geodesics on $M$, 
\item[(iii)] building a suitable lifted sub-Riemannian structure $\lift M$,
\item[(iv)] interpreting geometric properties of the lifted structures in terms of the magnetic field. 
\end{itemize}
In this paper we focus on the lowest dimensional case, namely when $M$ is a three-dimensional contact \sr manifold.

To generalize the approach described above, one might start by considering a closed differential 2-form $\beta\in \Omega^{2}(M)$ on the \sr manifold representing the magnetic field, which again, we assume exact, i.e., $\beta=dA$ for some \emph{magnetic potential} $A\in \Omega^{1}(M)$. 

However, interpreting heuristically \eqref{eq:lagrintro} in the \sr sense, namely when the vectors $v$ belong to the distribution $\distr=\ker\omega$, being $\omega$ a contact form, one immediately observes that adding to $A$ a term which is proportional to $\omega$ one does not affect the Lagrangian $L$, i.e., we can replace $A$ by any other $A'=A+g\omega$, where $g\in C^{\infty}(M)$. 

Similarly, since we are interested in minimizing the action defined by $L$, which we recall is defined by the integral of the Lagrangian \eqref{eq:lagrintro} over horizontal paths, we are free to add to $A$ an exact term, i.e., replace $A$ by any $A'=A+df$, where $f$ is a smooth function on $M$. Actually, due to the invariance of $A$ with respect to the contact form, we can reduce to add the horizontal part of the differential $df$.  See Section~\ref{prel:Rumin} below.

These two properties can be formalized by saying that a magnetic field in the \sr framework is represented by a \emph{Rumin differential 2-form} $\beta\in \Omega^{2}_{H}(M)$ which is closed.

\subsection{The Rumin complex and magnetic geodesic equations}
The Rumin complex is a refinement of the de Rham complex designed to better capture the geometry of contact (or, more in general, sub-Rie\-man\-nian) manifolds. Originally, it was introduced by Rumin in the framework of contact manifolds \cites{RuminContact}, and later extended to Carnot groups \cites{Rum00a}.  

For a three-dimensional contact manifold, with $\distr=\operatorname{span}\{X_1,X_2\}$ and Reeb vector field $X_0$, the Rumin complex is defined by the space of  \emph{horizontal $k$-forms} $\Omega^k_H(M)$ as follows:
\begin{gather}
\Omega^0_H(M):=C^\infty(M),\qquad 
\Omega^1_H(M):=\mathrm{span}\{\nu_1,\nu_2\},\\ 
\Omega^2_H(M):=\mathrm{span}\{\nu_1\wedge\omega,\nu_2\wedge\omega\}, \qquad
\Omega^3_H(M):=\mathrm{span}\{\nu_1\wedge\nu_2 \wedge \omega\},
\end{gather}
where $\nu_1,\nu_2,\omega$ is the dual basis to  $X_1,X_2,X_0$. The standard exterior derivative is replaced with the horizontal differential $\diff_H$, which is adapted to the filtration naturally induced by the contact structure. 
This new differential operator, when acting on 1-forms, has degree two. In our case, for $f\in C^{\infty}(M)$ and $A=A_1\nu_1+A_2\nu_2\in \Omega_H^1$,
it is explicitly given by \begin{equation}
		\diff^0_H f 
		=(X_1f)\nu_1+(X_2f)\nu_2;\qquad 
		\diff^1_HA=\diff\left(A+\diff A(X_1,X_2)\omega\right);
	\end{equation}
while $\diff^2_H=\diff$ coincides with the exterior differential.  A horizontal magnetic field on $M$ hence is written as a 2-form as follows
\begin{equation} \label{eq:betaintro}
\beta=\beta_{1}\nu_{1}\wedge \omega+\beta_{2}\nu_{2}\wedge \omega
\end{equation}
which is closed in the ordinary sense, or equivalently, with respect to $\diff_{H}$. The closure condition, of course, implies that the two coefficients in \eqref{eq:betaintro} are not independent.

Given a horizontal magnetic field  $\B$ on $M$, let $A$ be a horizontal magnetic potential on $M$ for $\beta$, i.e., such that $\beta=\diff_{H}A$. We stress that the choice of $A$ is not unique.   Sub-Riemannian magnetic geodesics are projections of integral curves of the Hamiltonian vector field associated with the magnetic Hamiltonian (here $q$ is a scalar denoting the charge of the particle)
\begin{equation}\label{eq:accam}
H_{A}=\frac{1}{2}\sum_{i=1}^{2}(h_{i}+q\,A_{i})^{2},
\end{equation}
where $A_{i}=\langle A,X_{i} \rangle$, cf.\ formula \eqref{eq:Ham0} for the classical case. The flow associated with the corresponding Hamiltonian vector field $ \vec{H}_{A}$ is called sub-Riemannian magnetic flow.

 Our first result is the characterization of sub-Riemannian magnetic geodesics, showing that they depend on the horizontal magnetic field $\beta$ and not on the particular choice of potential $A$. This should be compared with formula \eqref{eq:Omega}.

\begin{proposition} \label{p:maggeo}
Let $\B=\B_{1}\nu_1\wedge\omega + \B_{2}\nu_2\wedge\omega\in\Omega^2_H(M)$ be a horizontal magnetic field on $M$. Let $\gamma:[0,T]\to M$ be a sub-Riemannian magnetic geodesic on $M$ with charge $q$. Then there exists a smooth function $\alpha$ defined on $\gamma$ such that with respect to the Tanno connection $\nabla$
\begin{align}
\nabla_{\dot\gamma}\dot \gamma &= \alpha \,J\dot \gamma, \label{eq:dbmf1}\\
\nabla_{\dot \gamma} \alpha &= g(\tau(\dot \gamma),\dot \gamma)+q\, b(\dot \gamma), \label{eq:dbmf2}
\end{align}
where we denoted $b(\dot \gamma)=\beta(\dot \gamma,X_{0})$ and $\tau(\dot \gamma)=\Tor(X_{0},\dot \gamma)$.
\end{proposition}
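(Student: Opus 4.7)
My plan is to work in the Hamiltonian formalism on $T^*M$. Let $h_0,h_1,h_2$ denote the Hamiltonian lifts of the dual frame $X_0,X_1,X_2$ and set $u_i:=h_i+qA_i$, so that $H_A=\tfrac12(u_1^2+u_2^2)$. Projecting $\vec H_A$ to $M$ yields the horizontal velocity $\dot\gamma=u_1X_1+u_2X_2$, while along the fibre $\dot u_i=u_j\{u_i,u_j\}$. Since the $A_i$ are pulled back from $M$, we have $\{A_i,A_j\}=0$ and
\[
\{u_1,u_2\}=\{h_1,h_2\}+q\bigl(X_1A_2-X_2A_1\bigr).
\]
The first summand, by the contact condition, contains a non-trivial $h_0$-term plus horizontal structure terms in $h_1,h_2$; the second is $q$ times the horizontal part of $\diff A(X_1,X_2)$.

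Next I would express $\nabla_{\dot\gamma}\dot\gamma$ in the Tanno connection. Using $\nabla X_0=0$, $\nabla J=0$ and $\nabla g=0$, the connection reduces in the frame $X_1,X_2$ to a single 1-form $\theta$ with $\nabla_V X_1=\theta(V)X_2$, $\nabla_V X_2=-\theta(V)X_1$. Substituting $\dot\gamma$ and the fibre equations $\dot u_i=u_j\{u_i,u_j\}$, a direct manipulation yields
\[
\nabla_{\dot\gamma}\dot\gamma=\bigl(\theta(\dot\gamma)-\{u_1,u_2\}\bigr)\,J\dot\gamma,
\]
which proves \eqref{eq:dbmf1} with $\alpha:=\theta(\dot\gamma)-\{u_1,u_2\}$. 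The Tanno structure equations force $\theta(X_i)$ to cancel the horizontal piece of $\{h_1,h_2\}$, so that when $q=0$ one recovers the classical identity $\alpha=h_0$.

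For \eqref{eq:dbmf2} I would differentiate $\alpha$ along $\gamma$. From $\dot h_0=\{h_0,H_A\}=u_i\bigl(\{h_0,h_i\}+qX_0A_i\bigr)$ together with the defining identity $[X_0,X_i]=-\tau(X_i)$ for the Tanno torsion, the purely horizontal contributions reorganize (modulo $q$-terms) into $g(\tau(\dot\gamma),\dot\gamma)$. The remaining $q$-terms, combined with the derivative along $\dot\gamma$ of the correction $-q(X_1A_2-X_2A_1)$ inside $\alpha$, should collapse to $q\,b(\dot\gamma)$. Using the explicit Rumin formula $\diff_H A=\diff(A+\diff A(X_1,X_2)\omega)$ to read off the coefficients $\beta_i$ of $\beta=\beta_1\nu_1\wedge\omega+\beta_2\nu_2\wedge\omega$, one checks that the combination equals $q(u_1\beta_1+u_2\beta_2)=q\,\beta(\dot\gamma,X_0)$.

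The main obstacle is precisely this last matching. The raw derivative of the magnetic terms involves second derivatives of $A_1,A_2$ and structure functions of the frame, and one must verify that all contributions depending on the specific potential $A$ cancel, leaving only the intrinsic coefficients $\beta_i$. This is where the extra term $\diff A(X_1,X_2)\omega$ in the Rumin differential plays its essential role, encoding the gauge invariance $A\mapsto A+g\omega+\diff_H f$ and making the magnetic force intrinsic. A conceptually cleaner shortcut, which I would adopt, is to pass to the twisted symplectic form $\omega_A:=\omega_{\mathrm{can}}+q\pi^*\beta$ on $T^*M$: the magnetic flow of $H_A$ under $\omega_{\mathrm{can}}$ coincides with the flow of $H_0=\tfrac12(h_1^2+h_2^2)$ under $\omega_A$, and only the brackets $\{h_0,h_i\}$ are modified, by the manifest shift $q\beta(X_0,X_i)=-q\beta_i$, so the magnetic contribution to $\dot\alpha$ appears directly as $q\,b(\dot\gamma)$.
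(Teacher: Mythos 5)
Your primary route (Hamiltonian formalism, fibre equations for $u_i=h_i+qA_i$, Tanno connection written through a connection one-form) is essentially the computation in the paper, and the derivation of \eqref{eq:dbmf1} with $\alpha=h_0+q\,\diff A(X_1,X_2)$ is sound. But the heart of the proposition is exactly the step you defer: showing that the raw derivative of the gauge-dependent quantity $\diff A(X_1,X_2)$ along $\gamma$, combined with $\dot h_0$, produces the intrinsic coefficients $\beta_i$ plus the torsion term. You say this ``should collapse'' to $q\,b(\dot\gamma)$ and call it the main obstacle, but you never verify it; the paper does this via the explicit identities $\beta_i=X_i(\diff A(X_1,X_2))-X_0(A_i)-c^1_{i0}A_1-c^2_{i0}A_2$ coming from Lemma~\ref{lem:d_H}, which is precisely the cancellation you need. (Also, the identity you invoke, $[X_0,X_i]=-\tau(X_i)$, is false as stated: $[X_0,X_i]=\nabla_{X_0}X_i-\tau(X_i)$; the skew part $\nabla_{X_0}X_i$ only drops after symmetrizing in the quadratic form $g(\tau(\dot\gamma),\dot\gamma)$.)

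The shortcut you propose to close the gap is itself flawed as stated. The fibrewise momentum shift $\lambda\mapsto\lambda+qA$ by the \emph{horizontal} potential conjugates the flow of $H_A$ for the canonical form to the flow of $H_0$ for the form twisted by $q\,\pi^*\diff A$ (ordinary exterior derivative), not by $q\,\pi^*\beta=q\,\pi^*\diff_H A$; the two twists differ by $q\,\pi^*\diff\bigl(\diff A(X_1,X_2)\,\omega\bigr)$, which is exactly the Rumin correction. With the $\diff A$-twist the bracket $\{h_1,h_2\}$ is also modified, by $q\,\diff A(X_1,X_2)\neq 0$, so your claim that ``only the brackets $\{h_0,h_i\}$ are modified, by $-q\beta_i$'' does not follow, and the statement you rely on --- flow of $H_A$ under $\omega_{\mathrm{can}}$ equals flow of $H_0$ under $\omega_{\mathrm{can}}+q\pi^*\beta$ --- is precisely the gauge-invariance content of the proposition, not a standard fact you may quote. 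The argument can be repaired: shift instead by $A'=A+\diff A(X_1,X_2)\,\omega$, which has the same horizontal components (so $H_0\circ T_{qA'}=H_A$, the vertical component never entering the Hamiltonian) and satisfies $\diff A'=\diff_H A=\beta$; then the twisted brackets are $\{h_1,h_2\}$ unchanged and $\{h_i,h_0\}$ shifted by $q\beta_i$, and both \eqref{eq:dbmf1} and \eqref{eq:dbmf2} follow as in Proposition~\ref{p:77}. Either carry out that corrected conjugation, or complete the direct computation with Lemma~\ref{lem:d_H} as the paper does; as written, the proof has a genuine gap.
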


Here $\mathrm{Tor}$ is the torsion of the Tanno connection $\nabla$. We refer to Section~\ref{s:tanno} for more details. Notice that for $\beta=0$ we recover the geodesic equation (cf.\ Proposition~\ref{p:77}).

 It is interesting to observe that in the \sr case the magnetic field does not appear in equation \eqref{eq:dbmf1} as in the Riemannian case, cf.\  \eqref{eq:Omega}, but rather in equation \eqref{eq:dbmf2}. This is coherent with the fact that $\beta$ is a Rumin differential, which on 1-forms acts as a differential operator of order two. See also Remark~\ref{r:dbrmf}. 
 
 Another interesting observation is that we can combine equations \eqref{eq:dbmf1} and \eqref{eq:dbmf2} as a single equation as follows
\begin{equation}\label{eq:db9}
\frac{d}{dt} g(\nabla_{\dot\gamma}\dot \gamma,J\dot \gamma)-g(\tau(\dot \gamma),\dot \gamma)=q\, b(\dot \gamma),
\end{equation}
and we can notice that the left-hand side of \eqref{eq:db9} coincides with the \emph{sub-Riemannian geodesic curvature} $k_{SR}(\gamma)$ introduced in \cite{BKACV} as a metric invariant of horizontal curves. Thus, also in the three-dimensional sub-Riemannian case we have an analog interpretation of the magnetic geodesic equation as a scalar equation to be compared with \eqref{eq:Omega2}:
\begin{equation}\label{eq:db10}
k_{SR}(\gamma)=q\,  b(\dot \gamma).
\end{equation}
The sub-Riemannian geodesic curvature can be defined as a metric invariant as follows: if $\gamma$ is a horizontal curve on $M$ and $d_{SR}$ denotes the sub-Riemannian  distance then \cite[Theorem~3]{BKACV} 
\begin{equation}\label{eq:db15}
d_{SR}^{2}(\gamma(t+\eps),\gamma(t))=\eps^{2}-\frac{k^{2}_{SR}(\gamma(t))}{6!}\eps^{6}+o(\eps^{6}).
\end{equation}
It is interesting to notice that this invariant in dimension three appears at order $6$, and not at order $4$ as in the Riemannian case, cf.\ \eqref{eq:db16}. 
Being the Rumin differential a derivation of second order in dimension three, one could interpret \eqref{eq:db10} as equality between second order invariants.

\subsection{The lifted \sr structure and its geometry}
We now move to the geometric interpretation of the sub-Riemannian magnetic geodesic. As in the classical case, a \sr magnetic geodesic can be interpreted as the projection of a suitable geodesic flow on a lifted \sr structure.
Let us start from the simplest example.
\begin{example}[from Heisenberg to Engel]
\label{ex:fromHtoEintro}
In the specific case of the Heisenberg group  $\mathbb H=\R^3$ with coordinates $(x,y,z)$, we have that the dual basis of 1-forms to $X_{1},X_{2},\partial_{z}$ is given by $dx,dy,\omega=dz-\frac{1}{2}(xdy-ydx)$. The Rumin complex in this case is given by
\begin{equation}
\Omega^1_H(\mathbb{H})=\mathrm{span}\{dx,dy\},
\qquad
\Omega^2_H(\mathbb{H})=\mathrm{span}\{dx\wedge\omega,dy\wedge\omega\}.
\end{equation}
Let us consider the following magnetic potential and the corresponding magnetic field
\begin{equation}
A=\frac{x^2}{2}dy, \qquad \B=\diff_{H}A=dx\wedge\omega .
\end{equation}
We stress that $A$ has a polynomial coefficient of degree 2 but $\beta$ is a 2-form with constant coefficients, since $\diff_{H}$ is a derivation of degree 2 on 1-forms.

One can then follow the Riemannian lift procedure described above by considering a \sr structure on $\R^4$ generated by the vector fields
\begin{equation}
Y_1=X_1+A(X_1)\p_w=\p_x-\dfrac{y}{2}\p _z,\qquad Y_2=X_2+A(X_2)\p_w=\p _y+\dfrac{x}{2}\p _z+\frac{x^2}{2}\p_w.
\end{equation} 
One can immediately notice that $[Y_1,Y_2]=\p_z+x\p_w$,  
and moreover
\begin{equation}
[Y_1,[Y_1,Y_2]]=\p_w, \qquad [Y_2,[Y_1,Y_2]]=0.
\end{equation}
The distribution $\lift\distr=\mathrm{span}\{Y_1,Y_2\}$ is bracket generating on $\R^4$. Requiring that $\{Y_1,Y_2\}$ form an orthonormal frame on $\lift\distr$, the lifted \sr structure is the Engel group. 
\end{example}

For a general contact manifold, given a magnetic potential $A\in \Omega^{1}_{H}(M)$ for a magnetic field $\B\in\Omega^2_H(M)$ and a basis $X_1,X_2$ for the contact distribution which is orthonormal for the metric on $\distr$, we define the lifted distribution 
\begin{equation}
	\lift \distr=\mathrm{span}\{Y_1,Y_{2}\}, \qquad Y_{i}=X_i+ A (X_i)\p_w.
\end{equation}
We notice that this is a rank 2 distribution in the 4-dimensional space $\lift M=M\times \R$, hence it has necessarily step $\geq 3$. One can show that the step is finite, i.e., the sub-Riemannian structure on $\lift M$ is well-defined, if at every point there exists a horizontal derivation of $\beta$ which is non-vanishing (cf.\ \cref{lem:step}). In the rest of the introduction we always make this assumption. 


  The following result completely characterizes the situation when the magnetic field is non-vanish\-ing. This is proved combining Proposition~\ref{p:normal}, Lemma~\ref{lem:step3}, Propositions~\ref{lem:abnormalsinBneq0} and \ref{p:dbp}.
 \begin{theorem} 
\label{intro:nonvanishing}
Let  $\B\in\Omega^2_H(M)$ be a horizontal magnetic field on a three-dimensional contact \sr manifold $M$ which defines the lifted sub-Riemannian structure on $\lift M$.  Then
\begin{itemize}
\item[(a)] normal extremal trajectories of $\lift M$ projects on magnetic geodesics on $M$,
\item[(b)] abnormal extremal trajectories of $\lift M$ projects on horizontal curves $\gamma$ on $M$ satisfying $\iota_{\dot \gamma}\beta=\beta(\dot \gamma,\cdot)=0$.
\end{itemize}
Moreover, in the region where $\B$ is non-vanishing, the lifted distribution $\lift \distr$  has step $3$ on $\lift M$, and for every $\lift p\in \lift M$ there exists a unique abnormal extremal trajectory $\lift \g$ passing through $\lift p$.
\end{theorem}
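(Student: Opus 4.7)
The plan is to split the argument according to the three assertions and reduce (a) to a Hamiltonian symmetry, (b) to the Pontryagin abnormal conditions, and the step-3 plus uniqueness statement to explicit bracket computations together with an ODE argument. For (a), the key observation is that the lifted vector fields $Y_i = X_i + A(X_i)\p_w$ are independent of the $w$-coordinate, so $\p_w$ is an infinitesimal isometry of the lifted sub-Riemannian structure and the dual Hamiltonian $h_w$ is a first integral of $\vec{\lift{H}}$. Writing $\lift{h}_i = h_i + A_i h_w$ with $h_i$ pulled back from $T^*M$, one has
\[
\lift{H} = \tfrac{1}{2}\sum_{i=1}^{2}(h_i + A_i h_w)^2,
\]
which on the level set $\{h_w = q\}$ coincides with the magnetic Hamiltonian $H_A$ of \eqref{eq:accam}. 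Integral curves of $\vec{\lift{H}}$ thus project to those of $\vec H_A$, whose base projections to $M$ are by definition the sub-Riemannian magnetic geodesics with charge $q$.

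For (b), I would apply Pontryagin's maximum principle. First, $\lift\distr^\perp = \mathrm{span}\{\omega,\, dw - A\}$, since both forms vanish on $Y_1, Y_2$ and are linearly independent. Writing an abnormal lift as $\lift\lambda = \alpha\,\omega + \mu(dw - A)$ and imposing $\langle \lift\lambda, Y_i\rangle \equiv 0$ along the flow, Cartan's formula yields
\[
\alpha\, d\omega(X_i, \dot\gamma) + \mu\, dA(X_i, \dot\gamma) = 0, \qquad i = 1,2.
\]
Using the Reeb identity $\iota_{X_0} d\omega = 0$ and the relation $\beta = d_H A = d(A + dA(X_1,X_2)\,\omega)$, the contractions $dA(X_i,\cdot)$ can be rewritten in terms of $\beta$-contractions along horizontal vectors. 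The nondegeneracy of $d\omega|_\distr$ forces $\mu \neq 0$, and the system then reduces to $\iota_{\dot\gamma}\beta = \beta(\dot\gamma,\cdot) = 0$, which is the stated condition.

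For the step-3 claim in the region $\{\beta \neq 0\}$, a direct calculation gives
\[
[Y_1, Y_2] = [X_1, X_2] + dA(X_1, X_2)\,\p_w,
\]
whose $X_0$-component modulo $\lift\distr$ equals $\omega([X_1,X_2]) = -d\omega(X_1,X_2) \neq 0$ by contactness, so step 2 contributes exactly one new direction. I then compute $[Y_i,[Y_1,Y_2]]$ modulo $\lift\distr + [\lift\distr, \lift\distr]$; after the same manipulations as in (b), its $\p_w$-coefficient becomes a nonzero linear combination of $\beta_1, \beta_2$, so step 3 is attained at $\lift p$ precisely when $\beta(p) \neq 0$. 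Uniqueness of the abnormal through $\lift p$ then follows from (b): the equation $\beta(\dot\gamma, X_0) = \beta_1 \nu_1(\dot\gamma) + \beta_2 \nu_2(\dot\gamma) = 0$, with $(\beta_1, \beta_2) \neq 0$, singles out a one-dimensional direction in $\distr_p$; the resulting Cauchy problem on $M$ has a unique solution, and horizontality $\dot w = A(\dot\gamma)$ determines the fibre coordinate from $\lift p$.

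The main obstacle I foresee is the systematic passage from $dA(X_i,\cdot)$-expressions to the intrinsic coefficients $\beta_i$: since $d_H$ differs from $d$ by the correction term $dA(X_1,X_2)\,\omega$, both (b) and the step-3 computation require careful Cartan-formula bookkeeping, together with the Reeb identity and the nondegeneracy of $d\omega|_\distr$, with sign conventions matched to the Tanno frame adopted in the paper.
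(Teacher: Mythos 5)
Part (a) is correct and is essentially the paper's argument (Proposition~\ref{p:normal}): the $w$-independence of $Y_1,Y_2$ makes the dual momentum a first integral, and on the level set where it equals $q$ the lifted Hamiltonian is the magnetic Hamiltonian \eqref{eq:accam}. The problem is in part (b), and it propagates to the step-$3$ and uniqueness claims. The displayed condition $\alpha\,d\omega(X_i,\dot\gamma)+\mu\,dA(X_i,\dot\gamma)=0$, $i=1,2$, is only the first-order (Goh-type) condition obtained by differentiating $\langle\lift\lambda,Y_i\rangle\equiv 0$ once; for $\dot\gamma=u_1X_1+u_2X_2$ it collapses to the single scalar relation $\alpha=-\mu\,\diff A(X_1,X_2)$, i.e.\ it only identifies $\lift\lambda$ as a multiple of $\tau=dw-A-\diff A(X_1,X_2)\,\omega$. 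It contains no information about $\beta$ whatsoever, and the proposed bridge --- ``the contractions $dA(X_i,\cdot)$ can be rewritten in terms of $\beta$-contractions along horizontal vectors'' --- cannot work: $\beta=\B_1\nu_1\wedge\omega+\B_2\nu_2\wedge\omega$ vanishes identically on $\distr\times\distr$, while $\diff A|_{\distr\times\distr}$ does not, and the coefficients $\B_i=\diff_HA(X_i,X_0)$ involve Reeb-direction and second-order derivatives of $A$ (the Rumin differential is second order on one-forms, cf.\ Lemma~\ref{lem:d_H}). The missing step is one more differentiation along the flow: having reduced to $\lift\lambda=\mu\,\tau$ with $\langle\lift\lambda,Y_0\rangle=0$, one must differentiate this relation and use $[Y_i,Y_0]=\B_i\,\p_w \bmod \lift\distr$ (the paper's Lemma~\ref{lem:step3}), or equivalently the identity $d\tau=-\beta$ and the characteristic-curve criterion of Proposition~\ref{p:sympzero}, which yields $\mu\,(u_1\B_1+u_2\B_2)=0$ and hence $\iota_{\dot\gamma}\beta=0$ since $\mu\neq0$. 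This is precisely where the paper's computation $\sigma|_{\distr_0^{\perp}}=d\zeta_w\wedge\tau-\zeta_w\beta$ does the work, and it is also the computation your step-$3$ argument defers to (``after the same manipulations as in (b)''), so as written neither the characterization of abnormals nor the identification of the $\p_w$-coefficient of $[Y_i,[Y_1,Y_2]]$ with $\B_i$ is actually established.

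A secondary gap: your final paragraph derives only \emph{uniqueness} of the abnormal trajectory through $\lift p$ from the necessary conditions of the maximum principle. The theorem also asserts \emph{existence}, which requires the converse direction: the lift of the integral curve of $-\B_2X_1+\B_1X_2$ must be shown to be abnormal, e.g.\ by exhibiting the covector $\lift\lambda(t)=\mathrm{const}\cdot\tau_{\lift\gamma(t)}$ (with $\zeta_1=\zeta_2=\zeta_0=0$, $\zeta_w$ constant) as a characteristic of $\sigma|_{H^{-1}(0)}$; the paper's Proposition~\ref{lem:abnormalsinBneq0} is stated and proved as an equivalence for exactly this reason.
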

The last property can be rephrased by saying that, where $\B$ is non-vanishing, the lifted distribution $\lift \distr$ on $\lift M$  is a \emph{Engel-type} distribution.

 Observe that with respect to the magnetic lift of a Riemannian manifold, in this case abnormal extremal trajectories always exist, also when the magnetic field is never vanishing.  

\begin{remark} \label{r:intdb}
In the region where the magnetic field $\beta$ is non-vanishing, choosing a frame $X_1,X_2$ for $\distr$ and the corresponding dual elements, we can write $\B=\mg$ with $(\beta_{1},\beta_{2})\neq (0,0)$. The characteristic curves of $\beta$ (namely $\iota_{\dot\gamma} \beta =0$)
in this frame satisfy $\dot \gamma=-\beta_{2}X_{1}+\beta_{1}X_{2}$. 

Notice that, clearly, condition  $\iota_{\dot\gamma} \beta =0$ holds also for horizontal curves contained in the zero locus of $\beta$.
We stress that in both cases one has  $b(\dot \gamma)=0$, where $b$ appears in \eqref{eq:dbmf2}.
One can interpret this fact by saying that, along the projection onto $M$ of an abnormal extremal trajectory living on $\lift M$,  the magnetic field does not affect the dynamics.

\end{remark}

\subsection{On the zero locus}
Now we move to the study of the step of $\lift \distr$ in the lift of the zero locus $\mathcal{Z}$ of a magnetic field $\B\in\Omega^2_H(M)$. Let us write $\B=\mg$  with respect to a basis and let us consider the pair $(\beta_{1},\beta_{2})$ as a map from $M$ to $\R^{2}$. The step of the lifted distribution $\lift \distr$ at points in $\lift M$ is related with the rank of this map. 

\begin{theorem}
\label{intro:step-rank}
Let   $\B=\mg\in\Omega^2_H(M)$ be a horizontal magnetic field on $M$, a three-dimensional contact \sr manifold and let $p\in \mathcal{Z}$. 
\begin{enumerate}[(i)]
\item \label{itemintro:step2} If $\mathrm{rank}\left(\diff \B_1,\diff \B_2\right)|_{p}=2$, then the step of $\lift \distr$ at $\{ p \}\times \R \subset \lift M$ is $4$;
\item \label{itemintro:step1} If $\mathrm{rank}\left(\diff \B_1,\diff \B_2\right)|_{p}=1$, then the step of $\lift \distr$ at $\{ p \}\times \R \subset \lift M$ is $4$ or $5$;
\item \label{itemintro:step0} If $\mathrm{rank}\left(\diff \B_1,\diff \B_2\right)|_{p}=0$, then the step of $\lift \distr$ at $\{ p \}\times \R \subset \lift M$ is $\geq 5$.
\end{enumerate}
\end{theorem}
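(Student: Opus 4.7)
The plan is to compute the growth vector of $\lift \distr = \mathrm{span}\{Y_1, Y_2\}$ at $\lift p = (p, w)$ in an adapted frame and identify the $\p_w$-components of iterated brackets with derivatives of $\B$. Fix a local orthonormal frame $X_1, X_2$ of $\distr$ with Reeb $X_0$, and (after normalizing $\diff\omega(X_1, X_2) = -1$) write the structure relations $[X_1, X_2] = X_0 + c_1 X_1 + c_2 X_2$ and $[X_i, X_0] = a_{i1} X_1 + a_{i2} X_2$, the latter because the Reeb condition $\iota_{X_0}\diff\omega = 0$ forces $[X_0, X_i] \in \distr$. For a horizontal potential $A = A_1 \nu_1 + A_2 \nu_2 \in \Omega^1_H(M)$ with $\B = \diff_H A$, set $F := \diff A(X_1, X_2)$. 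Expanding $\B = \diff(A + F\omega) = \diff A + \diff F \wedge \omega + F \diff\omega$ and pairing with $(X_i, X_0)$ yields
\begin{equation} \label{eq:dbplan1}
\B_i = X_i F - X_0 A_i - a_{i1} A_1 - a_{i2} A_2,
\end{equation}
since the $F \diff\omega$ summand vanishes on $(X_i, X_0)$ by the Reeb property.

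A direct bracket computation then gives $[Y_1, Y_2] = c_1 Y_1 + c_2 Y_2 + Z_0$ with $Z_0 := X_0 + F\p_w$, so $\lift \distr^2 = \mathrm{span}\{Y_1, Y_2, Z_0\}$ has rank $3$ everywhere. Using \eqref{eq:dbplan1} one verifies the key identity
\begin{equation} \label{eq:dbplan2}
[Y_i, Z_0] = a_{i1} Y_1 + a_{i2} Y_2 + \B_i \p_w,
\end{equation}
which at $p \in \mathcal{Z}$ reduces to an element of $\lift \distr|_{\lift p}$. Consequently $\lift \distr^3|_{\lift p} = \lift \distr^2|_{\lift p}$ and the step at $\lift p$ is at least $4$.

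For the next layer, Leibniz together with the $w$-independence of $a_{ik}$ and $\B_i$ yields
\begin{equation}
[Y_j, [Y_i, Z_0]] = (X_j a_{i1}) Y_1 + a_{i1}[Y_j, Y_1] + (X_j a_{i2}) Y_2 + a_{i2}[Y_j, Y_2] + (X_j \B_i) \p_w,
\end{equation}
and every term but the last already lives in $\lift \distr^2$, so
\begin{equation} \label{eq:dbplan3}
[Y_j, [Y_i, Z_0]] \equiv (X_j \B_i)\, \p_w \pmod{\lift \distr^2}.
\end{equation}
Since any length-$4$ iterated bracket of $Y_1, Y_2$ reduces to brackets of the above form modulo $\lift\distr^3$, it follows that $\lift \distr^4|_{\lift p}$ equals $T_{\lift p}\lift M$ if and only if some $(X_j \B_i)(p) \neq 0$ for $i, j \in \{1, 2\}$, equivalently if and only if the restriction of the pair $(\diff \B_1, \diff \B_2)$ to $\distr_p$ is nonzero.

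The three cases then follow by a simple dichotomy on the location of the span of $\diff \B_1, \diff \B_2$ relative to $\R \omega|_p$. In case~(i), $\diff \B_1, \diff \B_2$ span a $2$-plane in $T^*_p M$; if both restrictions to $\distr_p$ vanished, both forms would lie in the line $\R \omega|_p$, contradicting independence, so the step is $4$. In case~(iii), $\diff \B_1 = \diff \B_2 = 0$ forces every $X_j \B_i$ to vanish, so the step is $\geq 5$. In case~(ii), the common $1$-dimensional span either coincides with $\R \omega|_p$ (both horizontal restrictions vanish, step $\geq 5$) or contains a nonzero horizontal covector (some $X_j \B_i \neq 0$, step $4$), giving the announced dichotomy. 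The main technical point is the identification \eqref{eq:dbplan2} of the $\p_w$-coefficient of $[Y_i, Z_0]$ with $\B_i$ itself, rather than with a generic second derivative of $A$: this is where the second-order nature of the Rumin differential on horizontal $1$-forms enters and forces step $3$ to be controlled by $\B$, while step $4$ is controlled by the horizontal derivatives of $\B$, exactly as in the statement.
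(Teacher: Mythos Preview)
Your approach is essentially the same as the paper's: you recompute the bracket relations \eqref{eq:[Yi,Y0]} (your \eqref{eq:dbplan2}) and the step criterion of Proposition~\ref{lem:step} (your \eqref{eq:dbplan3}), and then argue by the rank of the matrix of $X_j\B_i$. The computations are correct.

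There is, however, a genuine gap in case~(ii). You correctly split into two subcases according to whether the common line spanned by $\diff\B_1|_p,\diff\B_2|_p$ lies in $\R\omega|_p$ or not, and in the first subcase you conclude ``step $\geq 5$''. But the statement asserts that the step is \emph{exactly} $5$, and nothing in your argument rules out step $6$ or higher. What is missing is the observation that in this subcase some $X_0\B_i(p)\neq 0$ (otherwise the rank would be~$0$), and then the structure relation $[X_1,X_2]=X_0+c_1X_1+c_2X_2$ together with the vanishing of all first horizontal derivatives at $p$ gives
\[
X_1X_2\B_i(p)-X_2X_1\B_i(p)=[X_1,X_2]\B_i(p)=X_0\B_i(p)\neq 0,
\]
so at least one second-order horizontal derivative of $\B$ is nonzero at $p$. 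By your own formula \eqref{eq:dbplan3} iterated once more (equivalently, by Proposition~\ref{lem:step} with $k=2$), this forces $\lift\distr^5|_{\lift p}=T_{\lift p}\lift M$, i.e.\ step exactly $5$. This is precisely the argument the paper supplies; once you add it, your proof is complete and matches the paper's.
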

We notice that the result is independent of the choice of the frame.
When the rank of the map is 2, namely case (i), then the zero locus of the magnetic field is a curve. It is interesting to stress here that the step is not affected by the curve being horizontal or transversal to the distribution $\distr$ on $M$.

This is in contrast with case (ii) when the rank is 1. Up to switching indices, we can assume that $\diff \B_1\neq 0$.
 Hence $\Sigma=\beta_{1}^{-1}(0)$ defines a regular surface that contains the zero locus $\mathcal Z$ of $\beta$. We can characterize the step of the lifted \sr structure via a geometric analysis involving {\em characteristic points} of $\Sigma$, i.e., points $p\in \Sigma$ where  $\distr_p=T_p\Sigma$. 
\begin{theorem}\label{intro:rank1}
Let  $\B=\mg\in\Omega^2_H$ be a horizontal magnetic field on $M$, a three-dimensional contact \sr manifold satisfying $\mathrm{rank}\left(\diff\B_1,\diff\B_2\right)=1$ at $p\in \mathcal{Z}$.
Assume $\restr{\diff \B_1}{p}\neq 0$
and let $\Sigma=\B_1^{-1}(0)$.
Then $\lift\distr$ has step $5$ at $\{p\}\times \R$ if and only if $p$ is a characteristic point in $\Sigma$.
\end{theorem}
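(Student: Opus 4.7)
The plan is to compute the flag of iterated brackets of $\lift\distr=\mathrm{span}\{Y_1,Y_2\}$ directly at $\lift p=(p,w)$, exploiting the gauge freedom $A\mapsto A+\diff_H f$ (which preserves $\beta$) to simplify. Under such a gauge, $A_i=A(X_i)$ and the auxiliary function $g:=\diff A(X_1,X_2)$ transform as $A_i\mapsto A_i+X_if$ and $g\mapsto g+X_0f$, so by independently prescribing $X_1f(p)$, $X_2f(p)$, $X_0f(p)$ we can arrange $A_1(p)=A_2(p)=g(p)=0$. In such an adapted gauge $Y_i|_p=X_i|_p$ and, setting $Y_0:=X_0+g\,\p_w$, also $Y_0|_p=X_0|_p$. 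Writing $[X_1,X_2]=c_1X_1+c_2X_2+X_0$, one checks $[Y_1,Y_2]=c_1Y_1+c_2Y_2+Y_0$, so $\lift\distr^2|_{\lift p}$ is three-dimensional and coincides with $T_pM\subset T_{\lift p}\lift M$; in particular $T_{\lift p}\lift M/\lift\distr^2|_{\lift p}$ is generated by $\p_w|_{\lift p}$.

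Using the identity $\beta_i=X_ig-X_0A_i-A([X_i,X_0])$ together with $A(p)=0$, the $\p_w$-coefficient of $[Y_i,Y_0]=[X_i,X_0]+(X_ig-X_0A_i)\p_w$ equals $\beta_i(p)=0$, while $[X_i,X_0]|_p$ is horizontal and hence in $\lift\distr^2|_{\lift p}$. Consequently $\lift\distr^3|_{\lift p}=\lift\distr^2|_{\lift p}$, confirming that the step is at least four, in accordance with \cref{intro:step-rank} case (ii). For the level-four brackets I would expand
\begin{equation*}
[Y_i,[Y_j,Y_0]]=[X_i,[X_j,X_0]]+\bigl(X_ih_j-[X_j,X_0]A_i\bigr)\p_w,\qquad h_j:=X_jg-X_0A_j.
\end{equation*}
The first summand lies in $T_pM=\lift\distr^2|_{\lift p}$. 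Using $h_j=\beta_j+A([X_j,X_0])$ together with the identity $X_1A_2-X_2A_1=g+c_1A_1+c_2A_2$, which vanishes at $p$ in the adapted gauge, a direct term-by-term cancellation yields
\begin{equation*}
[Y_i,[Y_j,Y_0]]|_{\lift p}\equiv (X_i\beta_j)(p)\,\p_w \pmod{\lift\distr^2|_{\lift p}}.
\end{equation*}
Therefore the step at $\lift p$ equals $4$ if and only if $(X_i\beta_j)(p)\neq 0$ for some $i,j\in\{1,2\}$, equivalently $\diff_H\beta_1(p)\neq 0$ or $\diff_H\beta_2(p)\neq 0$.

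To conclude, the rank-one hypothesis yields $\diff\beta_2|_p=\lambda\,\diff\beta_1|_p$, hence $\diff_H\beta_2|_p=\lambda\,\diff_H\beta_1|_p$, so the criterion reduces to $\diff_H\beta_1(p)\neq 0$. Since $T_p\Sigma=\ker\diff\beta_1|_p$ and $\distr_p$ are both two-dimensional, the condition $X_1\beta_1(p)=X_2\beta_1(p)=0$ is equivalent to $\distr_p=T_p\Sigma$, i.e.\ $p$ is a characteristic point of $\Sigma$. Combining with \cref{intro:step-rank} case (ii), which confines the step at $\lift p$ to $\{4,5\}$, this shows that the step equals $5$ if and only if $p$ is characteristic. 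The main obstacle is the level-four computation: the $\p_w$-coefficient of $[Y_i,[Y_j,Y_0]]|_p$ a priori contains gauge-dependent contributions from $[X_j,X_0]A_i$ and from differentiating $A([X_j,X_0])$, and showing that these cancel systematically in the adapted gauge to leave the intrinsic quantity $X_i\beta_j(p)$ is the technical heart of the proof.
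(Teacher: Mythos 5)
Your overall strategy is sound and, at bottom, coincides with the paper's: everything is reduced to the criterion that, at a zero $p$ of $\B$, the step of $\lift\distr$ at $\{p\}\times\R$ equals $4$ precisely when some first-order horizontal derivative $X_i\B_j(p)$ is nonzero; after that, the rank-one hypothesis ($\diff\B_2|_p=\lambda\,\diff\B_1|_p$), the identification of characteristic points with $X_1\B_1(p)=X_2\B_1(p)=0$, and the confinement of the step to $\{4,5\}$ from \cref{intro:step-rank}(ii) finish the argument exactly as in the paper's proof of \cref{prop:rank1}. The difference is that the paper obtains this criterion from the general step characterization of \cref{lem:step}, which rests on the potential-independent identity $[Y_i,Y_0]=\B_i\p_w$ modulo $\lift\distr$ of \cref{lem:step3}, whereas you re-derive the special case by a pointwise bracket computation in a gauge adapted at $p$.

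Two points need attention. First, the normalization $A\mapsto A+\diff_H f$ changes the lifted distribution itself, so as written you prove the statement only for the adapted potential; to conclude for a given $A$ you must either show that the growth vector at $\{p\}\times\R$ is gauge-invariant (the paper only remarks, without proof, that the Lie algebras for different potentials are isomorphic) or run the computation for arbitrary $A$. Second --- and this repairs the first point as well as the step you call the ``technical heart'' --- the cancellation you worry about is automatic and needs no gauge: since $[X_j,X_0]$ is horizontal, one has $[Y_j,Y_0]-\B_j\p_w=[X_j,X_0]+A([X_j,X_0])\p_w=c_{j0}^{1}Y_1+c_{j0}^{2}Y_2$, a section of $\lift\distr$ (this is precisely \cref{lem:step3}), whence $[Y_i,[Y_j,Y_0]]\equiv(X_i\B_j)\p_w$ modulo sections of $\lift\distr^2$ for every potential, with no adapted choice of $f$. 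Finally, for the implication ``$p$ characteristic $\Rightarrow$ step $\geq 5$'' you must also check that no other fourth-order bracket produces a $\p_w$-component at $\lift p$: this is routine bookkeeping (every generator of $\lift\distr^{4}$ reduces, modulo sections of $\lift\distr^{3}$, whose values at $\lift p$ lie in $\lift\distr^{2}|_{\lift p}$ because $\B(p)=0$, to the brackets $[Y_i,[Y_j,Y_0]]$), but it is exactly what the recursive description of $\lift\distr^{k+3}$ in \cref{lem:step} provides and should not be left implicit.
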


Finally, we stress that at points where the rank is equal to zero we can have arbitrarily large step. This can be shown by considering explicit magnetic fields of the following form for any given integer $n\geq 1$ 
\begin{equation}
\B=\frac{f^n}{n!}(b_1\nu_1+b_2\nu_2)\wedge\omega\in \Omega^2_H,
\end{equation}
where $b_1,b_2,f\in C^\infty(M)$ with $b_1^2+b_2^2\neq 0$ and $\diff f\neq 0$.
In this case the zero locus of $\B$ given by $\mathcal{Z}=f^{-1}(0)$ and one can prove that the step of the lifted distribution $\lift\distr$ is equal to $3$ at points which projects on $M \setminus \mathcal{Z}$, it is $n+3$ at points which projects on $\mathcal{Z}\setminus \mathrm{Char}(\mathcal{Z})$, it is equal to $2n+3$ at points which projects on $ \mathrm{Char}(\mathcal{Z})$, where $ \mathrm{Char}(\mathcal{Z})$ denotes the set of characteristic points in $\mathcal{Z}$. Cf.\ Lemmas~\ref{lem:example} and \ref{lem:examplestep}.

\subsection{Structure of the paper} 
Section~\ref{s:prel} is devoted to some preliminaries and in particular the description of the Rumin complex for three-dimensional manifolds. In Section~\ref{s:mfield} we introduce magnetic fields for three-dimensional sub-Riemannian manifolds and we obtain the magnetic geodesic equations. Then we define the associated lifted sub-Riemannian structure in Section~\ref{s:nonzeromf} and describe its properties in the case when the magnetic field is never vanishing. Section~\ref{s:zeromf} covers the case when the magnetic field can vanish somewhere and the characterization of the step of the lifted structure and its abnormal curves in this case. 
Finally in \cref{s:examples} we present a class of magnetic fields with a given surface as zero locus and we analyze the behavior of abnormal extremal trajectories with respect to the zero locus of the magnetic field.

%

\section{Preliminaries} \label{s:prel}

We first recall here some basic notions of \sr geometry, for a more detailed introduction we refer to \cite{thebook}.
Then we focus on the specific case of three-dimensional contact \sr manifolds.
Finally, we present the Rumin complex defined for three-dimensional contact \sr manifolds, cf.\ \cite{RuminContact}.

\subsection{Sub-{R}iemannian structures}
Let $M$ be a smooth, connected $n$-dimensional manifold equipped with $\distr$ a bracket generating vector distribution of rank $r$ in $TM$.

\begin{definition}
\label{def:bracketgenerating}
For $k\geq 1$,  we set $\distr^1=\distr$ and recursively define
\begin{equation}
\distr ^{k+1} = \mathrm{span} \left\{\distr^{k},\left[\distr,\distr^k \right]\right\}.
\end{equation}
We say that the distribution $\distr$ at a point $p\in M$ is bracket generating of step $k$ if it holds that
\begin{equation}
\distr ^{k}_p =T_p M 
\qquad \mbox{and} \qquad
\distr ^{j}_p \subsetneq T_p M \quad \forall\, j\in\{1,\ldots,k-1\}.
\end{equation}
Finally, the distribution is bracket generating if it is bracket generating at every $p\in M$.
\end{definition}

The manifold $M$, equipped with $\distr$ a bracket generating vector distribution of rank $r$ and with $g$ a metric defined on $\distr$, is said to be a \emph{smooth \sr manifold of rank $r$}. 

Let $X_1,\ldots,X_r$ a locally defined frame for $\distr$. 
A curve $\gamma : [0,T] \to M$ is \emph{horizontal} if it is absolutely continuous and there exists a \emph{control} $u\in L^\infty([0,T],\R^r)$ satisfying
\begin{equation}
\label{eq:control}
\dot\gamma(t)=\sum_{i=1}^r u_i(t) X_i(\gamma(t))\in \distr_{\gamma(t)}, \qquad \text{for a.e.}\ t \in [0,T].
\end{equation}
The \emph{length} of a horizontal curve is defined as:
\begin{equation}
\ell(\gamma) := \int_0^T \sqrt{g(\dot\gamma(t),\dot\gamma(t))} \de t.
\end{equation}
Finally, the \emph{\sr distance} $d_{SR}$ between any two points $p,q\in
 M$ is defined as
\begin{equation}\label{eq:infimo}
d_{SR} (p,q) := \inf\{\ell(\gamma) \mid  \gamma \text{ horizontal curve joining $p$ and $q$} \}.
\end{equation}
By the Chow-Rashevskii theorem, the bracket generating assumption ensures that the distance $d_{SR}$ is finite, continuous and it induces the manifold topology \cite[Chapter 3]{thebook}.

The \sr Hamiltonian is the smooth function $H:T^*M\to\R$ defined as
\begin{equation}
\label{eq:hamiltonian}
 H(\lambda)=\frac12\sum_{i=1}^r\la \lambda ,X_i(x)\ra ^2,
\end{equation} 
 where $\lambda\in T^*_xM$ and $\la\cdot,\cdot\ra$ is the usual duality pairing. 

Let $\vec{H}$ be the Hamiltonian vector field on $T^*M$ associated to $H$. Namely, $\vec{H}$ is the unique vector field such that 
\begin{equation}\label{eq:db15H}
\sigma(\cdot,\vec H) = \diff H,
\end{equation}
where $\sigma$ is the canonical symplectic form on the cotangent bundle.
A curve $\lambda:[0,T]\to T^*M$ that solves 
\begin{equation}
\label{eq:hamiltoniandynamics}
	\dot\lambda (t) = \vec{H}(\lambda(t)),
\end{equation} 
is called \emph{normal extremal}.
The projection of a normal extremal is called \emph{normal extremal trajectory}, it is a smooth curve parametrized with constant speed, and whose sufficiently small arcs are length-minimizers \cite[Chapter 4]{thebook}.

Not all length-minimizers arise projecting the Hamiltonian flow \eqref{eq:hamiltoniandynamics}. In \sr geometry one should also consider the so-called \emph{abnormal extremal trajectories}. 
These are horizontal curves that are critical points of the end-point map and depend only on the distribution $\distr$ (while do not depend on the choice of the metric $g$ on it). 

Length-minimizers of abnormal type admit a non zero lift $\lambda:[0,T]\to T^*M$ satisfying a time-dependent Hamiltonian equation and are contained in $\distr^{\perp}:=H^{-1}(0)$. For distributions of constant rank (all distributions appearing in this paper satisfy this assumption) we have the following characterization which can be found in \cite[Section~4.3]{thebook}.
\begin{proposition} \label{p:sympzero}
Let $M$ be a sub-Riemannian structure associated with a distribution $\distr$ of constant rank. Then a never vanishing Lipschitz curve in $H^{-1}(0)$ is a characteristic curve for $\sigma|_{H^{-1}(0)}$ if and only if it is the reparametrization of an abnormal extremal.
\end{proposition}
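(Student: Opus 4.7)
The plan is to compute the kernel of $\sigma$ restricted to $H^{-1}(0)$ explicitly, using a local frame of $\distr$, and then match this characterization against the Pontryagin description of abnormal extremals already recalled just above the statement.

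Since $\distr$ has constant rank $r$, the zero level $H^{-1}(0)$ coincides with the annihilator bundle $\distr^{\perp}\subset T^{*}M$, which is a smooth submanifold of dimension $2n-r$. I would choose a local frame $X_{1},\ldots,X_{r}$ of $\distr$ and set $h_{i}(\lambda):=\langle\lambda,X_{i}\rangle$, with Hamiltonian vector fields $\vec{h}_{i}$ determined by $\diff h_{i}=\sigma(\cdot,\vec{h}_{i})$. Then $\distr^{\perp}=\{h_{1}=\cdots=h_{r}=0\}$ and, since $\diff h_{i}(w)=\sigma(w,\vec{h}_{i})$,
\begin{equation}
T_{\lambda}\distr^{\perp}=\bigl\{w\in T_{\lambda}(T^{*}M)\,:\,\sigma\bigl(w,\vec{h}_{i}(\lambda)\bigr)=0,\ i=1,\ldots,r\bigr\}.
\end{equation}
In particular each $\vec{h}_{i}(\lambda)$ is itself tangent to $\distr^{\perp}$, and by nondegeneracy of $\sigma$ together with a dimension count the symplectic orthogonal of $T_{\lambda}\distr^{\perp}$ coincides with $\mathrm{span}\{\vec{h}_{1}(\lambda),\ldots,\vec{h}_{r}(\lambda)\}$. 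Consequently
\begin{equation}
\ker\bigl(\sigma|_{\distr^{\perp}}\bigr)_{\lambda}=T_{\lambda}\distr^{\perp}\cap\mathrm{span}\{\vec{h}_{1}(\lambda),\ldots,\vec{h}_{r}(\lambda)\},
\end{equation}
i.e., $v$ is characteristic if and only if $v=\sum_{i=1}^{r}u_{i}\vec{h}_{i}(\lambda)$ for coefficients $u\in\R^{r}$ such that this combination is still tangent to $\distr^{\perp}$.

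Next I translate this into a statement about curves. If $\lambda:[0,T]\to\distr^{\perp}$ is Lipschitz, never vanishing, with $\dot\lambda(t)\in\ker(\sigma|_{\distr^{\perp}})_{\lambda(t)}$ almost everywhere, then the pointwise characterization yields measurable $u_{i}(t)$ with $\dot\lambda(t)=\sum_{i}u_{i}(t)\vec{h}_{i}(\lambda(t))$, bounded on compact intervals by the Lipschitz property of $\lambda$ and the non-vanishing hypothesis. Projecting via $\pi:T^{*}M\to M$ produces a horizontal curve $\gamma=\pi\circ\lambda$ with $\dot\gamma=\sum_{i}u_{i}X_{i}$, and the lifted equation $\dot\lambda=\sum_{i}u_{i}\vec{h}_{i}(\lambda)$ together with the constraint $\lambda\in\distr^{\perp}$ is exactly the time-dependent Hamiltonian system characterizing an abnormal extremal.

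The converse is immediate from the same algebraic characterization: any abnormal lift $\lambda$ satisfies $\lambda(t)\in\distr^{\perp}$ and $\dot\lambda\in\mathrm{span}\{\vec{h}_{i}(\lambda)\}$ by definition, and tangency to $\distr^{\perp}$ forces $\dot\lambda\in\ker(\sigma|_{\distr^{\perp}})$. The reparametrization clause appears because a characteristic curve is determined only by its tangent direction, whereas an abnormal extremal carries a distinguished parametrization through the choice of control. The main technical point I expect to handle with care is the regularity bookkeeping of the coefficient extraction: for a merely Lipschitz, never-vanishing $\lambda$, one must ensure that the $u_{i}$ may be taken in $L^{\infty}$ and that the induced change of parameter is biLipschitz. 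This is precisely where the non-vanishing hypothesis is used, since it prevents the extraction from degenerating and guarantees that a controlled reparametrization converts the characteristic equation into the Hamiltonian form required by the definition of abnormal extremal.
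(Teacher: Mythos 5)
The paper itself gives no proof of this proposition --- it is quoted from \cite[Section~4.3]{thebook} --- and your argument is essentially the standard proof from that reference: identify $H^{-1}(0)$ with the annihilator $\distr^{\perp}$, note that $T_{\lambda}\distr^{\perp}=\bigcap_{i}\ker \diff h_{i}=\bigl(\mathrm{span}\{\vec h_{1}(\lambda),\ldots,\vec h_{r}(\lambda)\}\bigr)^{\angle}$, deduce by nondegeneracy and a dimension count that $\ker(\sigma|_{\distr^{\perp}})_{\lambda}=T_{\lambda}\distr^{\perp}\cap\mathrm{span}\{\vec h_{i}(\lambda)\}$, and then match the resulting system $\dot\lambda=\sum_{i}u_{i}\vec h_{i}(\lambda)$, $h_{i}(\lambda)\equiv 0$, with the Hamiltonian characterization of abnormal extremals. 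Two points should be corrected. First, the sentence ``each $\vec h_{i}(\lambda)$ is itself tangent to $\distr^{\perp}$'' is false in general: $\diff h_{j}(\vec h_{i})=\{h_{i},h_{j}\}=h_{[X_{i},X_{j}]}$, which need not vanish at $\lambda\in\distr^{\perp}\setminus\{0\}$ unless $\lambda$ also annihilates $[\distr,\distr]$; in the contact case it never does, and if the claim were true the characteristic space would be all of $\mathrm{span}\{\vec h_{i}\}$, so every horizontal curve would lift to an abnormal extremal, contradicting the absence of nontrivial abnormal trajectories on contact structures recalled in the paper. Fortunately you never use it: your kernel formula only needs the correct inclusion $\mathrm{span}\{\vec h_{i}\}\subseteq (T_{\lambda}\distr^{\perp})^{\angle}$ together with the dimension count, so the rest of the argument stands. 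Second, the never-vanishing hypothesis is not what keeps the coefficient extraction nondegenerate: the vectors $\vec h_{1}(\lambda),\ldots,\vec h_{r}(\lambda)$ are linearly independent at every point of $T^{*}M$, zero section included, because the differentials $\diff h_{i}$ are; its actual role is that abnormal extremals are by definition nonzero covector curves, so the hypothesis excludes characteristic curves meeting the zero section from the equivalence (and similarly the reparametrization clause is just the bookkeeping between Lipschitz characteristic curves and trajectories driven by $L^{\infty}$ controls, not a ``distinguished parametrization'' of abnormals). With these two adjustments, and granting --- as the paper does --- the standard time-dependent Hamiltonian description of abnormal extremals, your proof is correct and follows the same route as the cited source.
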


\subsection{Three-dimensional contact sub-{R}iemannian spaces}
\label{prel:contact}

A three-dimensional smooth manifold $M$ is a \emph{\sr contact} manifold if it is equipped with a smooth one-form $\omega$ satisfying the non degeneracy condition $\omega\wedge\diff\omega\neq 0$,
where $\diff$ denotes the exterior derivative.

The \sr structure on $M$ is defined by
(i) $\distr=\ker\omega$ a two-dimensional bracket generating distribution,
(ii) $g$ a smooth metric defined on $\distr$.  

We require that the two-dimensio\-nal volume form $\mathrm{vol}_g$ defined by $g$ on $\distr$ coincides with the two-dimensional form $-\diff \omega|_{\distr}$.
The \emph{Reeb vector field} $X_{0}$ is the unique vector field such that
\begin{equation}
\label{eq:reeb}
\iota_{X_{0}}\omega=1,\qquad \iota_{X_{0}}\diff\omega=0,
\end{equation}
where $\iota_{X}$ denotes the interior product with respect to the vector field $X$. 

Given any local orthonormal frame $\{X_1, X_2\}$ for $\distr$, the family $\{X_1, X_2,X_0\}$ is a local frame for $TM$. We set $\{\nu_1,\nu_2,\nu_{0}\}$ to be the corresponding dual frame, where $\nu_{0}=\omega$ is  the contact form.
By our convention, a horizontal frame is positively oriented if $-\diff\omega(X_1,X_2)=1$. For such frames, the commutation relations read:
\begin{equation}\label{eq:structurecoeff}
\begin{split}
\left[X_{1},X_{2}\right] & =c_{12}^{1}X_{1}+c_{12}^{2}X_{2}+X_{0}, \\
\left[X_{1},X_{0}\right] & =c_{10}^{1}X_{1}+c_{10}^{2}X_{2},\\
\left[X_{2},X_{0}\right] & =c_{20}^{1}X_{1}+c_{20}^{2}X_{2},
\end{split} 
\end{equation}
where the $c_{ij}^{k}$ are suitable smooth functions on $M$, called structure functions. Notice that the  structure functions are constant if the \sr structure is left-invariant on a Lie group (cf. \cite[Section~7.4]{thebook}).
\begin{example}
\label{ex:Heisenberg}
The prototype model of \sr contact manifold is the three-di\-men\-sio\-nal Heisenberg group $\mathbb{H}$. 
This consists in $M=\R^3$ with coordinates $(x,y,z)\in\R^3$, equipped with the globally defined contact form
\begin{equation}
\label{eq:contactformH}
\omega=dz-\frac{1}{2}(xdy-ydx).
\end{equation}
The distribution $\distr=\ker \omega$ is generated by the vector fields:
\begin{equation}
X_1=\p _x-\dfrac{y}{2}\p _z,  
\qquad 
X_2=\p _y+\dfrac{x}{2}\p _z.
\end{equation}
The Reeb vector field is $X_0=\p_z$. 
Setting $g$ the \sr metric on $\distr$ such that $\{X_1,X_2\}$ is an orthonormal frame for $\distr$ we have $\mathrm{vol}_g=-\restr{\diff\omega}{\distr}=dx\wedge dy$,  and  the following bracket relations hold true:
\begin{equation}
[X_1,X_2]=X_0, \qquad [X_1,X_0]=[X_2,X_0]=0.
\end{equation}
In conclusion, $\{X_1,X_2,X_0\}$ constitutes a frame for $TM$ with corresponding dual frame for $T^*M$ given by $\nu_1=dx, \nu_2=dy, \nu_{0}=\omega$.

\end{example}

It is convenient for later purposes to introduce $h_{i}:T^{*}M\to \R$  for $i=0,1,2$ a set of linear on fiber functions defined by $h_{i}(\lambda)=\langle \lambda,X_{i}\rangle$ associated with the frame $X_{i}$ for $i=0,1,2$. We have the following identities, where $\vec H$ denotes the Hamiltonian vector field in \eqref{eq:db15H}
$$H=\frac{1}{2}\sum_{i=1}^{2}h_{i}^{2},\qquad \vec H=\sum_{i=1}^{2}h_{i}\vec h_{i}=\sum_{i=1}^{2}h_{i}X_{i}+h_{i}(c_{ij}^{k}h_{k}+c_{ij}^{0}h_{0})\partial_{h_{j}}$$
In particular, treating the functions $(h_{1},h_{2},h_{0})$ as coordinates on the fibers, and given an integral curve of $\vec H$ in $T^{*}M$ satisfying \eqref{eq:hamiltoniandynamics}, we have that for its projection $\gamma$ on the base manifold $M$ 
$$\dot \gamma=\sum_{i=1}^{2}h_{i}X_{i},$$
while the functions $h_{i}$ satisfy for $j=0,1,2$
$$\dot h_{j}=\{H,h_{j}\}=\sum_{i,k=1,2} c_{ij}^{k}h_{i}h_{k}+c_{0j}^{k}h_{0}h_{k}$$
\begin{remark}\label{r:prope} It is useful to recall the following properties relating the Hamiltonian vector field $\vec h\in \mathrm{Vec}(T^{*}M)$ associated to a Hamiltonian $h\in C^{\infty}(T^{*}M)$ and the Poisson bracket. For a proof of these basic facts in symplectic geometry we refer to \cite[Chapter 4]{thebook}.
\begin{itemize}
\item[(a)] If $h,k,k'\in C^{\infty}(T^{*}M)$ then $\vec h (k)=\{h,k\}$ and $\{h,kk'\}=\{h,k\}k'+\{h,k'\}k$. 
\item[(b)] If $h_{X}=\langle \lambda, X\rangle$ and $h_{Y}=\langle \lambda, Y\rangle$ are linear on fibers functions then $\{h_{X},h_{Y}\}=h_{[X,Y]}$.
\item[(c)] If $h_{X}=\langle \lambda, X\rangle$ and $\alpha, \alpha' \in C^{\infty}(M)$ then $\{h_{X},\alpha\}=X\alpha$ and $\{\alpha,\alpha'\}=0$.\end{itemize}
Thanks to properties (a)-(c) one can prove in particular that for every function $\alpha\in C^{\infty}(M)$ (regarded as a function in $T^{*}M$) the vector field $\vec \alpha$ is vertical, namely that $\pi_{*}\vec \alpha=0$.
\end{remark}
We recall the \emph{Cartan formula} for differential one-forms:
for any smooth one-form $\tau$ on $M$ and any smooth vector fields $V,W$, it holds that:
\begin{equation}
\label{eq:cartan}
\diff \tau(V,W)+\tau([V,W])=V(\tau(W))-W(\tau(V)).  
\end{equation} 
We deduce that for $k=0,1,2$ we have
\begin{equation}
\label{eq:cartanckij}
\diff\nu^k=-c^k_{10}\nu_1\wedge\omega -c^k_{20}\nu_2\wedge\omega -c^k_{12}\nu_1\wedge\nu_2.
\end{equation}

The volume form $\nu_1\wedge\nu_2\wedge\omega$ coincides with the so-called \emph{Popp's volume}. 
Moreover, since $\diff\omega=-\nu_1\wedge\nu_2$ by \eqref{eq:cartanckij}, notice that the Popp's volume coincides with $-\omega\wedge\diff\omega$. We refer to \cite{Popp} for more details on the construction in more general sub-Riemannian manifolds.
Denoting with $\mu$ the smooth measure associated with the Popp's volume, 
the divergence of a vector field $X$ with respect to $\mu$ is the smooth function $\mathrm{div}_\mu(X)$ that satisfies
\begin{equation}
	\mathcal{L}_X\mu=\mathrm{div}_\mu(X)\mu,
\end{equation}
where $\mathcal{L}_X$ denotes the Lie derivative with respect to $X$.
Recalling that $\mathcal{L}_X=\diff\circ \iota_{X}+\iota_{X}\circ \diff$ (where $\iota_{X}$ denotes the interior product with $X$) and that volume forms are closed, we have 
\begin{equation}
\label{Cartanvolume}
\mathcal{L}_X\mu=-\diff\left(\iota_X\,\omega\wedge\diff\omega\right).
\end{equation}
Moreover, using also \eqref{eq:cartanckij}, one computes the divergence in terms of the coefficients in \eqref{eq:structurecoeff}:
\begin{equation}
\label{eq:divXi}
\mathrm{div}_\mu(X_1)=-c^2_{12}, \qquad \mathrm{div}_\mu(X_2)=c^1_{12}.
\end{equation}
The divergence operator is linear and satisfies the following Leibniz rule:
\begin{equation}
\label{leibniz}
\mathrm{div}_\mu(fX)=Xf+f\mathrm{div}_\mu(X),
\end{equation}
where $f:M\to\R$ is smooth and $X$ is a vector field on $M$.

\begin{remark} We end this section by recalling that the canonical symplectic form $\sigma$ on the cotangent bundle $T^{*}M$ can be written in the basis of 1-forms $\nu_{i}$ dual to the chosen basis of $TM$ and the differentials $dh_{i}$ for $i=0,1,2$ of the linear on fibers functions defined above.
The symplectic  form $\sigma$ is the differential of the tautological form $s=\sum_{i=1,2,0} h_{i}\nu_{i}$ hence
$$\sigma=ds=\sum_{i=1,2,0} dh_{i}\wedge \nu_{i} + h_{i}d\nu_{i}$$
We recall that contact structures on three-dimensional manifolds do not admit non-constant abnormal extremals trajectories (see \cite[Proposition~4.38]{thebook}).
\end{remark}

\subsection{The {R}umin complex}
\label{prel:Rumin}
The construction of the {R}umin complex we present here is specific for three-dimensional \sr contact manifolds. For simplicity, we define objects globally on $M$ but everything can be defined on open sets $U\subset M$.

\begin{definition}[Horizontal differential forms]
For $k=0,\dots,3$, 
we define the space of the \emph{horizontal $k$-forms} $\Omega^k_H(M)$ as
\begin{equation}
\begin{split}
\Omega^0_H(M)&:=C^\infty(M),\\
\Omega^1_H(M) &:=\mathrm{span}\{\nu_1,\nu_2\}, 
\\
\Omega^2_H(M)&:=\mathrm{span}\{\nu_1\wedge\omega,\nu_2\wedge\omega\},\\
\Omega^3_H(M)&:=\mathrm{span}\{\omega\wedge\diff \omega\},
\end{split}
\end{equation}
where $\nu_1,\nu_2,\omega$ is the dual basis to  $X_1,X_2,X_0$.
\end{definition}

\begin{definition}[Horizontal differential]
For $k=0,1,2$, we define the \emph{horizontal differential} or \emph{Rumin differential}
\begin{equation}
\diff^k_H:\Omega^k_H(M)\to \Omega^{k+1}_H(M)
\end{equation}
in the following way:
\begin{itemize}
\item[(i)] For $f\in \Omega_H^0(M)$, we set
	\begin{equation}	\label{eq:dH0}
		\diff^0_H f 
		=(X_1f)\nu_1+(X_2f)\nu_2;
	\end{equation}
\item[(ii)] For $A=A_1\nu_1+A_2\nu_2\in \Omega_H^1(M)$, we set
	\begin{equation}	\label{eq:dH1}
		\diff^1_HA=\diff\left(A+\diff A(X_1,X_2)\omega\right);
	\end{equation}
\item[(iii)] For $\B=\mg\in \Omega^2_H(M)$, we set
	\begin{equation}	\label{eq:dH2}
		\diff^2_H \B=\diff\left(\mg\right).
	\end{equation}
\end{itemize}
\end{definition}
 Since  for $k=1,2$ it holds $\diff_H^k\circ\diff_H^{k-1}=0$, the co-chain 
 \begin{equation}
0 \longrightarrow \R\longrightarrow 
 \Omega^0_H(M)
\stackrel{\diff_H^0}{\longrightarrow} 
 \Omega^1_H(M)
\stackrel{\diff^1_H}{\longrightarrow} 
\Omega^2_H(M)
\stackrel{\diff_H^2}{\longrightarrow}  
\Omega^3_H(M)
\longrightarrow 0.
\end{equation}
defines a complex  $\left(\Omega_H^*,\diff^*_H\right)$, referred to as the \emph{Rumin complex}. 

Furthermore, $\diff_H^0$ and $\diff_H^2$ are differential operators of the first order, whereas $\diff_H^1$ is a second-order differential operator, as we show in the following with more explicit computations. In what follows, the index $k$ in the notation $\diff_H^k$ can be omitted whenever it is clear from the context.

\begin{lemma}
\label{lem:d_H}
Let $A=A_1\nu_1+A_2\nu_2\in \Omega^1_H(M)$ such that $\beta=\diff_H A=\mg \in \Omega^2_H(M)$. Then,
\begin{align}
\begin{split}\label{eq:B_1}
\beta_1&=X_1X_2(A_1)-X_1X_1(A_2)-c^1_{12}X_1(A_1)-c^2_{12}X_1(A_2)-X_0(A_1)
\\ & \quad-(X_1(c^1_{12})+c^1_{10})A_1-(X_1(c^2_{12})+c^2_{10})A_2, \end{split}
\\ \begin{split} \label{eq:B_2}
\beta_2&= X_2X_2(A_1)-X_2X_1(A_2)-c^1_{12}X_2(A_1)-c^2_{12}X_2(A_2)-X_0(A_2)
\\ & \quad-(X_2(c^1_{12})+c^1_{02})A_1-(X_2(c^2_{12})+c^2_{02})A_2. \end{split}
\end{align}
\end{lemma}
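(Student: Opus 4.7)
The plan is to unpack the definition \eqref{eq:dH1} of the Rumin differential applied to a horizontal one-form and then read off the components of the resulting two-form along the basis $\{\nu_1\wedge\omega,\nu_2\wedge\omega\}$ of $\Omega^2_H(M)$. I would proceed in three short steps.

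First, I would compute the scalar function $F := \diff A(X_1,X_2)$. Applying Cartan's formula \eqref{eq:cartan} to $A = A_1\nu_1 + A_2\nu_2$, and using the horizontality of $A$ (so $A(X_0)=0$) together with the commutation relations \eqref{eq:structurecoeff}, one gets
\begin{equation}
F = X_1(A_2) - X_2(A_1) - c^1_{12}A_1 - c^2_{12}A_2.
\end{equation}
In particular, $A + F\omega$ is an explicit one-form on $M$ whose exterior differential is, by definition, $\beta$.

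Second, I would extract $\beta_i$ by evaluating $\beta = \diff(A + F\omega)$ on the pair $(X_i, X_0)$: since $(\nu_j\wedge\omega)(X_i, X_0) = \delta_{ij}$, this pairing picks exactly $\beta_i$. A second application of Cartan's formula to $\diff(A+F\omega)$, combined with $\omega(X_0)=1$, $\omega(X_i)=0$, and the horizontality of $[X_i, X_0]$ read off from \eqref{eq:structurecoeff} (which kills the $F\omega$ contribution to $(A+F\omega)([X_i,X_0])$), yields the compact expression
\begin{equation}
\beta_i = X_i(F) - X_0(A_i) - c^1_{i0}A_1 - c^2_{i0}A_2.
\end{equation}

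Third, I would substitute the expression for $F$ above and expand the derivatives $X_i(c^j_{12}A_j)$ via the Leibniz rule; this produces the second-order terms $X_iX_1(A_2)$ and $X_iX_2(A_1)$, the first-order terms $c^j_{12}X_i(A_j)$, and the zeroth-order contributions $X_i(c^j_{12})A_j$, which combine with $c^j_{i0}A_j$ exactly as stated.

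The whole argument is a direct calculation; the only genuine obstacle is pure bookkeeping, namely keeping the sign conventions for Cartan's formula, for the wedge $\nu_i\wedge\omega$ in the chosen order, and for $c^k_{ij}$ versus $c^k_{ji}$ consistent throughout, since a single sign slip propagates everywhere. An alternative route, which avoids evaluation on vector pairs, is to expand $\diff(A+F\omega)$ directly in the basis $\{\nu_i\wedge\nu_j,\nu_i\wedge\omega\}$ using \eqref{eq:cartanckij}; the $\nu_1\wedge\nu_2$ coefficient would then be forced to vanish automatically, since by construction the Rumin differential lands in $\Omega^2_H(M)$, providing a useful consistency check.
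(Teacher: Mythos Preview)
Your proposal is correct and follows essentially the same route as the paper: compute $F=\diff A(X_1,X_2)$ via Cartan's formula, obtain the intermediate identity $\beta_i = X_i(F) - X_0(A_i) - c^1_{i0}A_1 - c^2_{i0}A_2$, and substitute. The only difference is the order of presentation (the paper derives the intermediate identity first and then computes $F$), which is purely cosmetic.
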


\begin{proof} 
Since $\B_i=\diff_HA(X_i,X_0)$ for $i=1,2$, by \eqref{eq:dH1} and using \eqref{eq:cartanckij} it holds that
\begin{align}
\beta_1&=X_1(\diff A(X_1,X_2))-X_0(A_1)-c^1_{10}A_1-c^2_{10}A_2, \label{eq:B1}
\\ \beta_2&=X_2(\diff A(X_1,X_2))-X_0(A_2)-c^1_{20}A_1-c^2_{20}A_2.\label{eq:B2}
\end{align} 
Exploiting \eqref{eq:cartan}, we obtain that 
\begin{equation}
\begin{split}
\diff A(X_1,X_2)&=X_1(A(X_2))-X_2(A(X_1))-A([X_1,X_2])
\\&=X_1(A_2)-X_2(A_1)-c^1_{12}A_1-c^2_{12}A_2.
\end{split}
\end{equation}
Substituting in \eqref{eq:B1} and \eqref{eq:B2} we conclude.
\end{proof}

The following result, due to Rumin, guarantees that every $\diff_H$-closed horizontal form is locally $\diff_H$-exact and that the cohomology induced by $(\Omega_H, \diff_H)$ is isomorphic to the de Rham cohomology of $M$. Consequently, all constructions involving the Rumin complex can also be interpreted locally, namely on an open neighborhood $U$ that is topologically equivalent to $\mathbb{R}^3$.

\begin{proposition}[\cite{RuminContact}]
\label{prop:Rumin}
The Rumin complex $\left(\Omega_H,\diff_H\right)$ is locally exact
and it computes the de Rham cohomology of $M$.
\end{proposition}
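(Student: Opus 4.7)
The plan is to construct an injective chain map $\Psi\colon(\Omega^*_H,\diff_H)\to(\Omega^*,\diff)$ whose cokernel is acyclic. This immediately yields a cohomology isomorphism $H^k(\Omega^*_H,\diff_H)\cong H^k_{\mathrm{dR}}(M)$, and local exactness then follows by applying the argument to a neighborhood $U\subset M$ diffeomorphic to $\R^3$, where the classical Poincar\'e lemma gives $H^k_{\mathrm{dR}}(U)=0$ for $k\geq 1$.

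First I would define
\begin{equation*}
\Psi^0=\mathrm{id},\qquad \Psi^1(A)=A+\diff A(X_1,X_2)\,\omega,\qquad \Psi^2=\mathrm{incl.},\qquad \Psi^3=\mathrm{id},
\end{equation*}
and verify the chain condition $\diff\circ\Psi^k=\Psi^{k+1}\circ\diff_H^k$. In degrees $k=1,2$ this is tautological from the defining formulas \eqref{eq:dH1}--\eqref{eq:dH2}. In degree $k=0$, writing $\diff f=X_1(f)\nu_1+X_2(f)\nu_2+X_0(f)\omega$, the identity $\Psi^1(\diff_H^0 f)=\diff f$ reduces to showing that $\diff(\diff_H^0 f)(X_1,X_2)=X_0 f$; this is a direct application of Cartan's formula \eqref{eq:cartan} combined with the bracket relation $[X_1,X_2]=c_{12}^1 X_1+c_{12}^2 X_2+X_0$ from \eqref{eq:structurecoeff}. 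Injectivity of $\Psi$ at each degree is clear, since the horizontal part of $\Psi^1(A)$ equals $A$ itself while the other $\Psi^k$ are identities or inclusions of subspaces.

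Next the cokernel complex $K^*=\Omega^*/\Psi(\Omega^*_H)$ must be analyzed. Decomposing every 1-form as $A+f\omega$ with $A\in\Omega^1_H$ and observing that $\alpha-\Psi^1(A)$ is a multiple of $\omega$, one identifies $K^1\cong C^\infty(M)$ with representatives of the form $f\omega$. Similarly $K^2\cong C^\infty(M)$ with representatives $g\,\diff\omega$, while $K^0=K^3=0$. The induced differential sends $[f\omega]\mapsto[\diff f\wedge\omega+f\,\diff\omega]=[f\,\diff\omega]$, since $\diff f\wedge\omega$ already lies in $\Omega^2_H$; under the above identifications this is the identity map $C^\infty(M)\to C^\infty(M)$, so $K^*$ is acyclic. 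The short exact sequence of complexes $0\to\Omega^*_H\to\Omega^*\to K^*\to 0$ and its induced long exact sequence in cohomology then force $\Psi$ to be a quasi-isomorphism, proving both statements of the proposition.

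The main subtlety is the chain map property at degree $0$: the completion $A\mapsto A+\diff A(X_1,X_2)\omega$ must correct precisely the Reeb component of $\diff$, which ultimately rests on the contact normalization $-\diff\omega(X_1,X_2)=1$ encoded in the bracket $[X_1,X_2]=c_{12}^1X_1+c_{12}^2X_2+X_0$. Once this identity is verified, the rest is linear algebra on the rank-one quotient bundles $K^1$ and $K^2$, and no deeper homological machinery than the snake/long exact sequence is required.
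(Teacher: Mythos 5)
Your proof is correct, but it is genuinely different from what the paper does: the paper offers no argument at all and simply cites Rumin's original work, where the statement is proved for contact manifolds of arbitrary dimension via the ideal generated by $\omega,\diff\omega$, its annihilator, and explicit homotopy operators built from the symplectic structure on the distribution. Your argument is an elementary, dimension-three reconstruction: the map $\Psi^1(A)=A+\diff A(X_1,X_2)\,\omega$ is exactly the canonical completion appearing in \eqref{eq:dH1}, the degree-zero compatibility $\diff(\diff_H^0f)(X_1,X_2)=X_0f$ follows as you say from Cartan's formula \eqref{eq:cartan} and $[X_1,X_2]=c^1_{12}X_1+c^2_{12}X_2+X_0$, and the cokernel complex is indeed $0\to C^\infty(M)\xrightarrow{\ \mathrm{id}\ }C^\infty(M)\to 0$ in degrees $1,2$ (since $\diff f\wedge\omega\in\Omega^2_H$ and $g\,\diff\omega$ spans a complement of $\Omega^2_H$), so the long exact sequence plus the Poincar\'e lemma on $U\simeq\R^3$ gives both claims. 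What your route buys is a short, self-contained proof with no homotopy machinery; what it gives up is generality, since the quotient bundles are rank one only because $\dim M=3$ (in higher dimensions the middle of the complex is more involved and Rumin's homotopies are needed). One point worth making explicit rather than calling tautological: the degree-one chain property uses that $\diff\bigl(A+\diff A(X_1,X_2)\omega\bigr)$ has no $\nu_1\wedge\nu_2$ component, i.e.\ that $\diff_H^1$ really lands in $\Omega^2_H$; this rests on the normalization $\diff\omega(X_1,X_2)=-1$ and is the computation recorded in \cref{lem:d_H}, so a one-line verification or a pointer there would close that small gloss.
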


Originally, the Rumin complex was introduced by Rumin in the framework of contact manifolds \cites{RuminContact}, and later for Carnot groups \cites{Rum00a}. 
More recently, an alternative construction of this complex has been proposed in \cites{FT23,FT24} including homogeneous groups and filtered manifolds, of which equiregular \sr structures represent particular cases.

\subsection{Tanno connection} \label{s:tanno} 
We introduce the Tanno connection, which is a canonical connection on sub-Riemannian contact manifold $M$, see  \cites{tanno, abrcontact}. For proofs of some facts listed in this subsection, in the same three-dimensional setting, we also refer to \cite{BBCCM}.

Given the normalized contact structure $\omega$ on $M$, we define the linear map $J:TM\to TM$ by $g(X,JY)=\mathrm{d}\omega(X,Y)$ for horizontal vector fields $X,Y$, while $JX_{0}=0$. 
\begin{definition}
\label{def:Tanno}
The Tanno connection $\nabla$ is the unique linear  connection on $TM$ satisfying
\begin{enumerate}[(i)]
\item $\nabla g=0$, $\nabla X_0=0$;
\item $\Tor(X,Y)=g(X,JY) X_0=\mathrm{d}\omega(X,Y)X_0$ for all $X,Y\in\distr$;
\item $\Tor(X_0,JX)=-J\Tor(X_0,X)$ for any vector field $X$ on $M$. 
\end{enumerate}
where $\Tor$ denotes the torsion of the connection $\nabla$.
\end{definition}
In what follows $\nabla$ will always denote the Tanno connection. Being $\nabla$ a metric connection, it follows that $\nabla_XY$ is parallel to $JY$ for any unitary horizontal vector fields $X,Y$. 
It is also easy to check that the Tanno connection $\nabla$ commutes with the operator $J$, i.e., $\nabla_XJY=J\nabla_XY.$

Let us introduce the ``horizontal'' Christoffel symbols
\begin{equation}\label{eq:db4bis}
\Gamma_{ij}^k := \frac{1}{2}\left( c_{ij}^k + c_{ki}^j + c_{kj}^i \right), \qquad i,j,k=1,2.
\end{equation}
Notice that $\Gamma_{ij}^k + \Gamma_{ik}^j=0$ and that one can recover some of the structural functions with the relation
\begin{equation}\label{eq:db4}
c_{ij}^k = \Gamma_{ij}^k - \Gamma_{ji}^k,  \qquad i,j,k=1,2.
\end{equation}
In terms of the structural functions, we have, for $i,j=1,2$
\begin{equation}\label{eq:db5}
\nabla_{X_i} X_j = \sum_{k=1}^{2} \Gamma_{ij}^k X_k, \qquad \nabla_{X_0} X_i = \frac{1}{2}\sum_{k=1}^{2}(c_{k0}^i - c_{i0}^k) X_k, \qquad JX_i = \sum_{j=1}^{2} c_{ij}^0 X_j.
\end{equation}

Let $\Tor$ be the torsion associated to the Tanno connection and  $\tau:\distr \to \distr$ be the linear operator $\tau(X):=\Tor(X_0,X)$ for $X$ horizontal. Then $\tau$ is symmetric with respect to $g$ and
\begin{equation}
\tau(X_i) = \frac{1}{2}\sum_{k=1}^{2}(c_{k0}^i + c_{i0}^k) X_k, \qquad \Tor(X_j,X_k) = - c_{jk}^0 X_0, \end{equation}
The equations for sub-Riemannian normal geodesics on $M$ can be also characterized as follows. 
\begin{proposition}\label{p:77}
Let $\gamma:[0,T]\to M$ be a sub-Riemannian geodesic on $M$. Then there exists a smooth function $\alpha$ defined on $\gamma$ such that
\begin{align}
\nabla_{\dot\gamma}\dot \gamma &= \alpha \,J\dot \gamma\\
\nabla_{\dot \gamma} \alpha &= g(\tau(\dot \gamma),\dot \gamma)
\end{align}
\end{proposition}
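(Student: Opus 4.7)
The proof is a coordinate computation starting from the Hamiltonian description of normal extremals. I would lift $\gamma$ to a normal extremal $\lambda:[0,T]\to T^*M$ and work with the linear-on-fiber functions $h_i(t):=\langle \lambda(t),X_i(\gamma(t))\rangle$ for $i=0,1,2$. Hamilton's equations $\dot\lambda=\vec H(\lambda)$, combined with the Poisson-bracket rules of Remark~\ref{r:prope} and the structure equations \eqref{eq:structurecoeff}, give $\dot\gamma=h_1X_1+h_2X_2$ together with
\begin{equation*}
\dot h_1 = -h_2\bigl(c_{12}^1 h_1 + c_{12}^2 h_2 + h_0\bigr),\qquad
\dot h_2 = h_1\bigl(c_{12}^1 h_1 + c_{12}^2 h_2 + h_0\bigr),
\end{equation*}
\begin{equation*}
\dot h_0 = c_{10}^1 h_1^2 + (c_{10}^2+c_{20}^1)\, h_1 h_2 + c_{20}^2 h_2^2.
\end{equation*}

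For the first identity, I would observe that, since $\nabla$ is metric and $\nabla X_0=0$, the connection preserves the splitting $TM=\distr\oplus\mathbb{R}X_0$; hence $\nabla_{\dot\gamma}\dot\gamma$ is horizontal. Moreover, conservation of $|\dot\gamma|^2=h_1^2+h_2^2=2H$ along the extremal yields $g(\nabla_{\dot\gamma}\dot\gamma,\dot\gamma)=0$, so that $\nabla_{\dot\gamma}\dot\gamma$ is automatically proportional to $J\dot\gamma=-h_2X_1+h_1X_2$. This intrinsically defines the smooth function $\alpha$. To identify $\alpha$ explicitly, I would expand
\begin{equation*}
\nabla_{\dot\gamma}\dot\gamma = \sum_{k=1}^2\Bigl(\dot h_k + \sum_{i,j=1}^2 h_i h_j\,\Gamma_{ij}^k\Bigr) X_k
\end{equation*}
via \eqref{eq:db5}, substitute the ODEs for $\dot h_1,\dot h_2$ above, and simplify using the antisymmetries $\Gamma_{ij}^j=0$, $\Gamma_{ij}^k+\Gamma_{ik}^j=0$ together with the explicit formula \eqref{eq:db4bis}. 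A direct cancellation eliminates all $c_{12}^\ell$-contributions and leaves a residual $h_0$ factor, giving $\nabla_{\dot\gamma}\dot\gamma = h_0\,J\dot\gamma$, that is, $\alpha=h_0$.

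For the second identity, once $\alpha=h_0$ is established, one has $\nabla_{\dot\gamma}\alpha=\dot\gamma(h_0)=\dot h_0$. Expanding $\tau(X_i)=\tfrac12\sum_k(c_{k0}^i+c_{i0}^k)X_k$ and computing directly gives
\begin{equation*}
g(\tau(\dot\gamma),\dot\gamma) = c_{10}^1 h_1^2 + (c_{10}^2+c_{20}^1)\, h_1 h_2 + c_{20}^2 h_2^2,
\end{equation*}
which coincides with the expression for $\dot h_0$ obtained above. The main obstacle is the algebraic cancellation needed to pin down $\alpha=h_0$: it relies on carefully matching the antisymmetries of the Tanno Christoffel symbols against the Hamilton equations for $h_1,h_2$. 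Once this step is in place, the second identity follows by direct comparison of two quadratic forms in $(h_1,h_2)$.
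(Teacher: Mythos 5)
Your proof is correct: the Hamiltonian ODEs for $h_1,h_2,h_0$, the expansion of $\nabla_{\dot\gamma}\dot\gamma$ via the Christoffel symbols \eqref{eq:db4bis}--\eqref{eq:db5}, and the identification $\alpha=h_0$ with $\dot h_0=g(\tau(\dot\gamma),\dot\gamma)$ all check out. This is essentially the paper's approach: while the paper handles Proposition~\ref{p:77} itself by citation to Rumin, its proof of the more general magnetic statement (Proposition~\ref{p:maggeo}) is exactly this computation with the potential terms $A_i$ added, and your argument is that proof specialized to $A=0$.
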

This result is well-known (see \cite[Proposition 15]{RuminContact}). A proof for general contact structures, with language similar to the one presented here, can be found for instance in \cite{abrcontact}. We refer to \cite{BKACV} for the relation with the geodesic curvature in the three-dimensional case. 
\begin{remark}
Notice that in the case of the Heisenberg group $\tau=0$, hence $\alpha$ is constant along geodesics $\gamma$ and actually coincides with the vertical part of the covector defining it.

In the general case the quantity appearing in the second equation $\eta(\dot \gamma):= g(\tau(\dot \gamma),\dot \gamma)$ is a directional invariant which is related to the metric invariant $\chi$ of the contact structure \cite{AB12}. We refer to \cite{BKACV} for more details.
\end{remark}

\newcommand{\bi}{\zeta}
\section{Magnetic fields on 3{D} contact sub-{R}iemannian manifolds}
\label{s:mfield}

Let us consider a three-dimensional contact \sr manifold, which we will denote by $M$ throughout the paper. We start by introducing horizontal magnetic fields.

\begin{definition}
A \emph{horizontal magnetic field} is a differential two-form $\B\in\Omega^2_H(M)$ which is closed with respect to the exterior derivative $\diff$. 
With respect to a choice of the frame, there exist $\beta_1,\beta_2\in C^\infty(M)$ such that
\begin{equation}
	\B=\B_{1}\nu_1\wedge\omega + \B_{2}\nu_2\wedge\omega, \qquad \diff \B=0.
\end{equation}
If $\B$ is exact, a  smooth one-form $A=A_1\nu_1+A_2\nu_2\in \Omega_H^1$ such that $\B=\diff_HA$ is called \emph{horizontal magnetic potential}.
\end{definition}

\begin{remark} By formula \eqref{eq:dH2}, one has $\diff\B=\diff_{H}\B$ for every $\B\in\Omega^2_H$, so that one can use equivalently both differentials in the above definition. In particular, since $\diff_{H}\B=0$, we can always find a horizontal magnetic potential $A\in\Omega^1_H(M)$ locally defined on $M$ (cf.\  \cref{prop:Rumin}).  
 Moreover, notice that $A\in\Omega^1_H(M)$ is unique up to adding a term $\diff_Hf$ with $f\in C^\infty(M)$.
\end{remark}
\subsection{Magnetic geodesics equation}
In this section we want to prove a characterization of magnetic geodesics in sub-Riemannian geometry similar to the one given in Proposition~\ref{p:77} for classical geodesics.

Let $\B\in\Omega^2_H(M)$ be a horizontal magnetic field on $M$ and let $A\in\Omega^1_H(M)$ be a horizontal magnetic potential on $M$ for $\beta$ such that $\beta=\diff_{H}A$. Sub-Riemannian magnetic geodesics are obtained as projections of integral curves of the Hamiltonian vector field associated with the magnetic Hamiltonian (here $q$ is a scalar denoting the charge of the particle)
\begin{equation}\label{eq:accam}
H_{A}=\frac{1}{2}\sum_{i=1}^{2}(h_{i}+q\,A_{i})^{2},
\end{equation}
that is obtained by modifying the sub-Riemannian Hamiltonian with the functions $A_{i}=\langle A,X_{i} \rangle$,  (cf.\ formula \eqref{eq:Ham0} for the classical case). 
The corresponding Hamiltonian vector field is written as
$$\vec{H}_{A}=\sum_{i=1}^{2}(h_{i}+q\, A_{i})(\vec h_{i}+q\, \vec A_{i}).$$
We give a geometric characterization of magnetic geodesics, showing that the notion we introduced of magnetic field is well-defined since geodesics do not depend on the choice of $A$.

\subsection{Proof of Proposition~\ref{p:maggeo}}
 By linearity with respect to $q$, it is enough to consider the case $q=1$. Consider  an integral curve $\dot \lambda=\vec H_{A}(\lambda)$ and set $\lambda=(\gamma,h)$ where $\gamma$ is the projection onto $M$ and  $h=(h_{1},h_{2},h_{0})$ are the vertical coordinates associated with the frame $X_{1},X_{2},X_{0}$ as discussed in Section~\ref{prel:contact}.

Thanks to Remark~\ref{r:prope}, being $A_{i}$  smooth functions on $M$, the vector fields $\vec A_{i}$ are vertical, i.e., $\pi_{*}\vec A_{i} =0$. Projecting the equation  $\dot \lambda=\vec H_{A}(\lambda)$ on $M$ one thus obtains
$$\dot \gamma=\sum_{i=1}^{2}(h_{i}+A_{i})X_{i},$$
where the quantities $h_{i}$ and $A_{i}$ satisfy the following differential equations along the flow (recall that $\dot f=\vec H_{A}(f)=\{H_{A},f\}$)
\begin{align} \label{eq:db1}
\dot h_{j}&=\sum_{i=1}^{2}(h_{i}+A_{i})\{h_{i}+A_{i},h_{j}\}=\sum_{i=1}^{2}(h_{i}+A_{i})[\{h_{i},h_{j}\}+\{A_{i},h_{j}\}]\\
&=\sum_{i,k=1}^{2}(h_{i}+A_{i})[c_{ij}^{k} h_{k}+c_{ij}^{0} h_{0}-X_{j}(A_{i})]. \nonumber
\end{align}
Similarly
\begin{align}\label{eq:db2}
\dot A_{j}&=\sum_{i=1}^{2}(h_{i}+A_{i})\{h_{i}+A_{i},A_{j}\}=\sum_{i=1}^{2}(h_{i}+A_{i})[\{h_{i},A_{j}\}+\{A_{i},A_{j}\}]\\
&=\sum_{i=1}^{2}(h_{i}+A_{i})[X_{i}(A_{j})]. \nonumber
\end{align}
Now we are ready to compute the quantity $\nabla_{\dot \gamma}\dot \gamma$
to characterize the magnetic geodesic equation. We have (recall that $\nabla_{\dot \gamma} f=\dot f$)
\begin{align}\label{eq:db3}
\nabla_{\dot \gamma}\dot \gamma&=\sum_{j=1}^{2}\nabla_{\dot \gamma}[(h_{j}+A_{j})X_{j}]=\sum_{j=1}^{2}(\dot h_{j}+\dot A_{j})X_{j} +(h_{j}+A_{j})\nabla_{\dot \gamma}X_{j}\\
&=\sum_{i,j=1}^{2}(\dot h_{j}+\dot A_{j})X_{j} +(h_{j}+A_{j})(h_{i}+A_{i})\sum_{k=1}^{2}\Gamma_{ij}^{k}X_{k}, \nonumber
\end{align}
where we used the relations $\nabla_{X_{i}}X_{j}=\sum_{k=1}^2\Gamma_{ij}^{k}X_{k}$ given in Section~\ref{s:tanno}. Adding \eqref{eq:db1} and \eqref{eq:db2}
we get
\begin{align*}
\dot h_{j}+\dot A_{j}&=\sum_{i,k=1}^{2}(h_{i}+A_{i})[c_{ij}^{k} h_{k}+ c_{ij}^{0} h_{0}+X_{i}(A_{j})-X_{j}(A_{i})],
\end{align*}
and replacing in \eqref{eq:db3} (also switching $j$ and $k$ in the second summand)
\begin{align*}
\nabla_{\dot \gamma}\dot \gamma
&=\sum_{i,j,k=1}^{2}(h_{i}+A_{i})[c_{ij}^{k} h_{k}+ c_{ij}^{0} h_{0}+X_{i}(A_{j})-X_{j}(A_{i})]X_{j} +(h_{k}+A_{k})(h_{i}+A_{i})\Gamma_{ik}^{j}X_{j}\\
&=\sum_{i,j,k=1}^{2}(h_{i}+A_{i})[-c_{ij}^{k} A_{k}+ c_{ij}^{0} h_{0}+X_{i}(A_{j})-X_{j}(A_{i})]X_{j} ,
\end{align*}
 where we added and subtracted $c_{ij}^{k} A_{k}$ in the first sum and used that for horizontal coefficients
$c_{ij}^{k}+\Gamma_{ik}^{j}$ 
is skew symmetric in $i,k$ due to \eqref{eq:db4bis} and \eqref{eq:db4}. Denoting
$$\bi_{ij}=dA(X_{i},X_{j})=X_{i}(A_{j})-X_{j}(A_{i})-\sum_{k=1}^{2}c_{ij}^{k}A_{k},$$
we have
\begin{align}\label{eq:db11}
\nabla_{\dot \gamma}\dot \gamma&=\sum_{i,j=1}^{2}(h_{i}+A_{i})[\bi_{ij} + c_{ij}^{0} h_{0}]X_{j} ,
\end{align}
which can be rewritten in a more compact form using \eqref{eq:db5} (notice that $\bi_{ij}=\bi c_{ij}^{0}$ where $\bi=\bi_{12}$)
\begin{align} \label{eq:db12}
\nabla_{\dot \gamma}\dot \gamma&=(\bi+h_{0})J\dot \gamma.
\end{align} 
Let us now compute first
\begin{align*}
\dot \bi=\nabla_{\dot \gamma} \bi&=\sum_{i=1}^{2}(h_{i}+A_{i})\{h_{i}+A_{i},\bi\}=\sum_{i=1}^{2}(h_{i}+A_{i})X_{i}(\bi)\\
&=\sum_{i=1}^{2}(h_{i}+A_{i})(\beta_{i}+X_{0}(A_{i})+c_{i0}^{j}A_{j}),
\end{align*}
where in the second line we crucially used relations \eqref{eq:B1} and \eqref{eq:B2}. Moreover 
\begin{align*}
\dot h_{0}=\nabla_{\dot \gamma} h_{0}&
=\sum_{i=1}^{2}(h_{i}+A_{i})[\{h_{i},h_{0}\}+{\{A_{i},h_{0}\}}]\\
&=\sum_{i,j=1}^{2}(h_{i}+A_{i})(c_{i0}^{j}h_{j}-X_{0}(A_{i})),
\end{align*}
and summing together we get
\begin{align*}
\dot \bi + \dot h_{0}
&=\sum_{i,j=1}^{2}\beta_{i} (h_{i}+A_{i})+c_{i0}^{j}(h_{j}+A_{j})(h_{i}+A_{i}).
\end{align*}
The conclusion follows recognising that given $v=\sum_{i=1}^{2}v_{i}X_{i}$, one has
\begin{equation}
b(v)=\beta(v,X_{0})=\sum_{i=1}^{2}\beta_{i}v_{i},\qquad 
g(\tau(v),v)=\frac{1}{2}\sum_{i,k=1}^{2}(c_{k0}^i + c_{i0}^k) v_{i}v_{k}. \qedhere
\end{equation}

\begin{remark} \label{r:dbrmf}
It is interesting to observe that in the Riemannian case since $\beta=dA$ involves the exterior differential, the magnetic field appears in the equation of the geodesics  at the level of equation \eqref{eq:db12} (or, equivalently, \eqref{eq:dbmf1}). 

In the sub-Riemannian case, we have $\beta=\diff_{H}A$ and the Rumin differential acts as a differential operator on degree $2$ on one-forms. Hence the coefficients of the magnetic field contain second order derivatives of the coefficients of $A$ and coherently appear in the second equation \eqref{eq:dbmf2}.
\end{remark}
\subsection{The lifted sub-Riemannian structure}
Inspired by the construction described in \cref{s:intro},
we associate with a horizontal magnetic field $\B$ a lifted \sr structure on the line-bundle $ \lift M = M\times \R$. 
The construction we describe assumes the existence of a horizontal magnetic potential $A\in\Omega^1_H(M)$ globally defined. We stress again that, if such a potential does not exist, the construction is still-well defined locally on $M$.

\smallskip

Let $\B$ be a horizontal magnetic field, and $A$ a horizontal magnetic potential for $\B$.
Given $\g:[0,T]\to M$ a horizontal curve in $M$, and $w_{0}\in \R$, we define the lift $\lift{\g}=(\g,w):[0,T]\to \lift{M}$ satisfying $w(0)=w_{0}$ with
\begin{equation}
w(t)=w_{0}+\int_0^tA(\dot{\g}(s))ds.
\end{equation}
Notice that in particular $\dot{\lift{\g}}$ is in the kernel of the 1-form $dw-A$. In particular if
\begin{equation}
\dot{ \g}(t)=u_1(t)X_1|_{\g (t)}+u_2(t)X_2 |_{\g (t)},
\end{equation}
then
\begin{equation}
\dot{\lift \g}(t)=u_1(t)\restr{(X_1+A(X_1)\p_w)}{\lift \g (t)}+u_2(t)\restr{(X_2+A(X_2)\p_w)}{\lift \g (t)},
\end{equation}
where we write elements of $T\lift M\simeq TM\oplus T\R$ as $X+\alpha \p_w$ with $X\in TM$ and $\alpha \in \R$. We want to set the length of $\lift \g$ to be equal to the length of $\g$. This is done in the following.

\begin{definition}
The \emph{lifted distribution} on $\lift M$ is the horizontal $\mathrm{rank}$-two distribution given by $\lift \distr = \mathrm{span}\{Y_1,Y_2\}$, where $Y_1,Y_2$ is an orthonormal frame defined by
\begin{equation}
\label{eq:generatingfamily}
Y_1=X_1+A(X_1)\p_w, \qquad Y_2=X_2+A(X_2)\p_w.
\end{equation} 
and $X_{1},X_{2}$ is an orthonormal frame for $\distr$.
\end{definition}

It is easy to see that the distribution $\lift \distr$ is independent of the choice of the frame for $\distr$. Indeed, by linearity of $A$, it holds that $$\lift \distr = \mathrm{span}\{X+A(X)\p_w\mid X\in \distr\}.$$ Moreover if $g$ denotes the inner product on $\distr$ and $\pi:\lift M \to M$ denotes the canonical projection the metric just defined on $\lift \distr$ is given by  $\lift g:=\pi^{*}g$.

\begin{remark} 
\label{rem:independenceofY_0}
Since $\distr$ is a contact distribution on $M$, the vector field $[Y_1,Y_2]$ is linearly independent from $\lift \distr$. This follows from the fact that
$$[Y_1,Y_2]=[X_1,X_2]+\left(X_1A_2-X_2A_1\right)\p_w,$$
and that $[X_1,X_2]$ is linearly independent from $X_{1},X_{2}$. 
More precisely, denoting
\begin{equation}
\label{eq:Y0}
Y_0= X_0+\diff A(X_1,X_2)\p_w,
\end{equation}
using \eqref{eq:structurecoeff} and \eqref{eq:cartan}, we obtain
\begin{equation}
\label{eq:[Y1,Y2]}
[Y_1,Y_2]=c_{12}^{1}Y_{1}+c_{12}^{2}Y_{2}+Y_0.
\end{equation}
We observe that $Y_0$ is not in the distribution $\lift \distr$ and it is independent of the choice of the orthonormal frame for $\distr$.
\end{remark}

\begin{remark}
\cref{ex:fromHtoEintro} shows how the above construction applied to $M=\mathbb H$ with the horizontal magnetic field $\B=\diff_{H}A=dx\wedge\omega$ associated with the horizontal magnetic potential $A=\frac{x^2}{2}dy$ gives rise to the Engel structure on $\R^4$.

It is interesting to compare the choice of $A$ of \cref{ex:fromHtoEintro} with the horizontal 1-form
\begin{equation}
	A'=-\left(\frac{z}{2}+\frac{xy}{12}\right)dx+\frac{x^2}{12}dy.
 \end{equation}
One can check that 
$$A-A'=\left(\frac{z}{2}+\frac{xy}{12}\right)dx+\frac{5x^2}{12}dy =\diff_H\left(\frac{xz}{2}+\frac{x^2y}{6}\right).$$ 
The lifted distribution $\lift \distr'$ associated to $A'$ differs from the lifted distribution $\lift\distr$ associated to $A$, nevertheless, the commutator relations recover the Lie algebra structure of the Engel group.

In particular, we have that $\lift \distr'$ is generated by
\begin{equation}
Y'_1
		=\p_x-\dfrac{y}{2}\p _z-\left(\frac{z}{2}+\frac{xy}{12}\right)\p_w, 
\qquad
Y'_2
		=\p _y+\dfrac{x}{2}\p _z+\frac{x^2}{12}\p_w.
\end{equation}
And it holds that $[Y'_1,Y'_2]=\p_z+\frac{x}{2}\p_w$, while $[Y'_1,[Y'_1,Y'_2]]=\p_w$ and $[Y'_2,[Y'_1,Y'_2]]=0$.
\end{remark} 
 One can prove, more in general, that given $A\in \Omega^{1}_{H}(M)$ and $A'=A+\diff_{H} f$, with $f\in C^\infty(M)$. 
Then, the Lie algebras $\mathrm{Lie}\,\lift \distr (A) $ and $\mathrm{Lie}\,\lift\distr(A')$ defined by the distribution associated with the two different potentials are isomorphic. 

\subsection{Normal extremals} In this section we prove the following fact, relating the \sr Hamiltonian flow on $\lift M$ and the magnetic flow on $M$.
\begin{proposition} \label{p:normal}
The \sr normal extremal trajectories for the lifted \sr structure $(\lift M, \lift \distr, \lift g)$ project onto magnetic geodesics on $M$.
\end{proposition}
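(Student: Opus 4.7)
The plan is to use the $\R$-translation symmetry of the lifted structure to reduce the lifted \sr Hamiltonian $\lift H$ on $T^*\lift M$ to the magnetic Hamiltonian $H_A$ on $T^*M$. First, I would introduce linear-on-fibers coordinates on $T^*\lift M$: define $\lift h_i(\mu) = \langle \mu, Y_i\rangle$ for $i=1,2$, and $\eta(\mu) = \langle \mu, \partial_w\rangle$. Since $Y_i = X_i + A_i \partial_w$ with $A_i = A(X_i) \in C^\infty(M)$, we have
\begin{equation}
\lift h_i = h_i + A_i\, \eta, \qquad i=1,2,
\end{equation}
where $h_i$ denotes the linear-on-fibers function on $T^*M$ regarded as a function on $T^*\lift M$ via the natural projection $T^*\lift M \to T^*M$ induced by $\lift M = M\times \R$. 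Consequently, the lifted \sr Hamiltonian takes the form
\begin{equation}
\lift H = \frac{1}{2}\sum_{i=1}^{2} \lift h_i^{\,2} = \frac{1}{2}\sum_{i=1}^{2} (h_i + A_i\, \eta)^2.
\end{equation}

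Next, I would exploit the invariance of the orthonormal frame $Y_1, Y_2$ under $w$-trans\-la\-tions, from which $\partial_w \lift H = 0$. By the basic properties of the Poisson bracket recalled in Remark~\ref{r:prope}, this gives $\{\lift H, \eta\} = 0$, so $\eta$ is a first integral of $\vec{\lift H}$. Fix a value $\eta \equiv q \in \R$ along any integral curve $\mu(t)$ of $\vec{\lift H}$; on this level set the lifted Hamiltonian coincides with the magnetic Hamiltonian $H_A$ of charge $q$ defined in \eqref{eq:accam}, viewed as a function on $T^*M$.

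Finally, writing Hamilton's equations in local coordinates $(x, w, p, \eta)$ on $T^*\lift M$, the $w$-independence of $\lift H$ causes the equations for $(x,p)$ to decouple from those for $(w, \eta)$, and along the invariant level set $\{\eta = q\}$ they reduce to exactly the Hamilton equations for $H_A$ on $T^*M$. Hence the projection of $\mu(t)$ onto $T^*M$ is an integral curve of $\vec{H_A}$, and its further projection onto $M$ is by definition a sub-Riemannian magnetic geodesic with charge $q$ (characterized intrinsically by Proposition~\ref{p:maggeo}). The only step requiring care is the identification $\lift h_i = h_i + A_i\, \eta$ via the splitting $T^*\lift M = T^*M \oplus T^*\R$ coming from the trivialization $\lift M = M \times \R$; once this is in place, the rest is a direct Hamiltonian reduction by the $\R$-action generated by $\partial_w$.
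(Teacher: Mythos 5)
Your argument is correct and follows essentially the same route as the paper's proof: identifying $T^{*}\lift M\simeq T^{*}M\oplus T^{*}\R$, writing $\langle \zeta, Y_i\rangle = h_i + A_i\,\zeta_w$, using $w$-independence of $\lift H$ to conclude that the fiber coordinate dual to $\partial_w$ is a first integral, fixing its value to be the charge $q$, and recognizing the reduced Hamiltonian as $H_A$ from \eqref{eq:accam}. No substantive differences to report.
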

\begin{proof} This is a direct consequence of the construction. We report here a sketch of the proof for completeness.  
In what follows we identify $\lift M=M\times \R$ where points are denoted by pairs $(p,w)$, and $T \lift M\simeq  TM \oplus T\R$. Similarly we can consider the identification $T^{*}\lift M\simeq  T^{*}M \oplus T^{*}\R$, denoting points $\zeta=(\lambda,\zeta_{w})\in T^{*}M \oplus T^{*}\R$.

The sub-Riemannian Hamiltonian associated with the lifted structure $(\lift M, \lift D, \lift g)$ is
\begin{equation}\label{eq:78}
\lift H=\frac{1}{2}\sum_{i=1,2}\langle\zeta,Y_{i}\rangle^{2}.
\end{equation}
Using the fact that $\zeta=(\lambda,\zeta_{w})$ and $Y_{i}=X_{i}+A_{i}\partial_{w}$ we can decompose
$$\langle\zeta,Y_{i}\rangle=\langle\lambda,X_{i}\rangle+A_{i}\langle \zeta, \partial_{w}\rangle=h_{i}+A_{i}\zeta_{w},$$
where $h_{i}=\langle\lambda,X_{i}\rangle$ is the Hamitonian linear on fibers on $T^{*}M$ (cf.\ Section~\ref{prel:contact}).
Hence we can rewrite \eqref{eq:78} as
\begin{equation}\label{eq:79}
\lift H=\frac{1}{2}\sum_{i=1,2} (h_{i}+A_{i}\zeta_{w})^{2}.
\end{equation}
Since the functions $A_{i}=A_{i}(p)$ do not depend on the variable $w$, the Hamiltonian $\lift H$ is independent of $w$ as well. This implies that along the corresponding Hamiltonian flow one has $\dot \zeta_{w}=0$. Setting $\zeta_{w}=q$ (this is the charge of the particle!) one can rewrite the Hamiltonian \eqref{eq:79} as
\begin{equation}
\lift H=\frac{1}{2}\sum_{i=1,2} (h_{i}+qA_{i})^{2}.
\end{equation}
Writing Hamiltonian equations and comparing with the magnetic Hamiltonian $H_{A}$ given in \eqref{eq:accam} it readily follows that projections on $M$ of integral curves of the Hamiltonian vector field associated with $\lift H$ on $\lift M$ are magnetic geodesics given in Proposition~\ref{p:maggeo}.
\end{proof}
\subsection{On the Maxwell equations}
The space of the Rumin two-forms $\Omega^2_H(M)$ has dimension two. The closure condition on the magnetic field provides a constraint, hence the coefficients of a magnetic field are not independent.  
In this section, we describe how these coefficients interact, starting from reinterpreting the closure condition in terms of a Maxwell-type equation.

\begin{lemma}
Let $\B=\mg\in \Omega^2_H(M)$ be a horizontal magnetic field.
The condition $\diff \B =0$ is equivalent to the following
\begin{equation}
\label{eq:maxwell}
	\mathrm{div}_{\mu}(-\B_2X_1+\B_1X_2)=0.
\end{equation}
where $\mu$ is the smooth measure associated with the Popp's volume.
\end{lemma}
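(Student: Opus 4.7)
The plan is to compute $d\B$ explicitly in the Popp frame and recognize that its coefficient is (up to a sign) exactly the divergence of the vector field $V := -\B_2 X_1 + \B_1 X_2$. The cleanest route is via Cartan's magic formula, since the vector field $V$ is chosen precisely so that $\iota_V \mu$ reproduces $\B$.

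First I would compute $\iota_V \mu$, where $\mu = \nu_1\wedge\nu_2\wedge\omega$ is the Popp volume. Using $\nu_i(X_j)=\delta_{ij}$ and $\omega(X_i)=0$, one has $\iota_{X_1}\mu = \nu_2\wedge\omega$ and $\iota_{X_2}\mu=-\nu_1\wedge\omega$. Hence
\begin{equation}
\iota_V \mu = -\B_2\,\nu_2\wedge\omega - \B_1\,\nu_1\wedge\omega = -\B.
\end{equation}

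Next I would apply Cartan's magic formula $\mathcal{L}_V\mu = d\,\iota_V \mu + \iota_V d\mu$. Since $\mu$ is a top-degree form on $M$, $d\mu=0$, and therefore
\begin{equation}
\mathcal{L}_V\mu = d\,\iota_V\mu = -d\B.
\end{equation}
By the very definition of divergence, $\mathcal{L}_V\mu = \mathrm{div}_\mu(V)\,\mu$. Comparing gives
\begin{equation}
d\B = -\mathrm{div}_\mu(-\B_2 X_1 + \B_1 X_2)\,\mu,
\end{equation}
and since $\mu$ is nowhere vanishing, $d\B=0$ is equivalent to \eqref{eq:maxwell}, concluding the proof.

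There is essentially no obstacle here: the only step that requires care is verifying the sign of $\iota_V\mu$, which relies on the orientation convention $-d\omega(X_1,X_2)=1$ adopted in the paper (equivalently, $\mu = \nu_1\wedge\nu_2\wedge\omega = -\omega\wedge d\omega$). As a sanity check one could alternatively expand $d\B$ directly using $d\nu_k\wedge\omega = -c^k_{12}\,\nu_1\wedge\nu_2\wedge\omega$ (from \eqref{eq:cartanckij}) and $\nu_i\wedge d\omega = 0$ for $i=1,2$, obtaining
\begin{equation}
d\B = \bigl( X_1(\B_2)-X_2(\B_1) - c^1_{12}\B_1 - c^2_{12}\B_2\bigr)\,\mu,
\end{equation}
which matches the divergence expression computed through \eqref{eq:divXi} and the Leibniz rule \eqref{leibniz}.
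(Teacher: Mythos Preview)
Your proof is correct. The key identity $\iota_V\mu=-\B$ is checked correctly, and Cartan's formula together with $d\mu=0$ immediately yields $d\B=-\mathrm{div}_\mu(V)\,\mu$, which is the desired equivalence.

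Your approach differs from the paper's. The paper proceeds by direct computation: it expands $d\B$ using \eqref{eq:cartanckij} to obtain the coefficient $X_1\B_2-X_2\B_1-c^1_{12}\B_1-c^2_{12}\B_2$ in front of the volume form, and separately expands $\mathrm{div}_\mu(-\B_2X_1+\B_1X_2)$ via the Leibniz rule \eqref{leibniz} and the explicit divergences \eqref{eq:divXi}, then compares the two expressions. This is exactly the ``sanity check'' you outline at the end. Your main argument, by contrast, is coordinate-free: recognizing that $V$ was chosen so that $\iota_V\mu=-\B$ turns the statement into a one-line consequence of the definition of divergence. Your route is shorter and makes the choice of $V$ transparent; the paper's route has the advantage of producing the explicit coefficient formula \eqref{eq:dB}, which is used later in the article.
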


\begin{proof}
On the one hand, recalling \eqref{eq:cartanckij}, we have that 
\begin{equation} \label{eq:dB}
\diff\B=\diff\left(\mg\right)=\left(X_1\B_2-X_2\B_1-c^1_{12}\B_1-c^2_{12}\B_2\right)\nu_1\wedge\nu_2\wedge\omega.
\end{equation}
On the other hand, by  the linearity of the divergence operator and exploiting \eqref{leibniz} and \eqref{eq:divXi}, it holds that 
\begin{equation} \label{eq:div}
\mathrm{div}_\mu(-\B_2X_1+\B_1X_2)
= -X_1\B_2+c^2_{12}\B_2+X_2\B_1+c^1_{12}\B_1.
\end{equation}

Therefore, comparing the coefficient of the volume form in \eqref{eq:dB} with the expression in \eqref{eq:div}, we deduce that $\diff\B=0$ if and only if $\mathrm{div}_\mu(-\B_2X_1+\B_1X_2)=0$.
\end{proof}

\begin{remark} 
Formula \eqref{eq:maxwell} can be seen as the analog of Maxwell's equation in classical magnetic theory in the three-dimensional Euclidean space, with $\mu$  the Euclidean volume measure and in the absence of an electric field. 
Maxwell's equations were studied in the setting of Carnot groups in terms of the Rumin complex in \cite{FT12}.
\end{remark}

\begin{remark} 
In \cite[Proposition 4.2]{CFKP23}, the authors deduce an explicit formula expressing the dependence between the coefficients of a horizontal magnetic field in the Heisenberg group written with respect to the horizontal polar frame (we follow the notation of \cref{ex:Heisenberg}).
\begin{equation}
	R=\frac{x}{r}X_1+\frac{y}{r}X_2,
	\qquad
	\Phi=-\frac{y}{r}X_1+\frac{x}{r}X_2,
\end{equation}
where $r=\sqrt{x^2+y^2}$. 
Their formula is equivalent to \eqref{eq:maxwell}.
\end{remark}

In the following result we use \eqref{eq:maxwell} to characterize the coefficient $\B_1$ of a magnetic field of the form $\B=\B_1dx\wedge\omega$ in the case of the Heisenberg group.

\begin{lemma}
Let $\B=\B_1dx\wedge\omega\in \Omega^2_H(\mathbb{H})$ be a horizontal magnetic field in the Heisenberg group. 
Then, there exists $g\in C^\infty\left(\R^2\right)$ such that
\begin{equation}
\B_1(x,y,z)=g\left(x, z-\frac{xy}{2}\right).
\end{equation} 
\end{lemma}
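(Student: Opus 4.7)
The plan is to specialize the Maxwell-type equation \eqref{eq:maxwell} to the Heisenberg setting with $\B_2 \equiv 0$, turning it into a single first-order linear PDE $X_2 \B_1 = 0$, and then solve it by straightening $X_2$ via the obvious coordinate change.

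First I would recall from \cref{ex:Heisenberg} that for the canonical frame $X_1 = \partial_x - (y/2)\partial_z$, $X_2 = \partial_y + (x/2)\partial_z$, the bracket relations read $[X_1, X_2] = X_0$ with $X_0 = \partial_z$ and $[X_i, X_0] = 0$, so all structure coefficients $c_{12}^1$, $c_{12}^2$ vanish. By \eqref{eq:divXi} this gives $\mathrm{div}_\mu(X_1) = \mathrm{div}_\mu(X_2) = 0$. Substituting $\B_2 = 0$ in \eqref{eq:maxwell} and using the Leibniz rule \eqref{leibniz}, the closure condition $d\B = 0$ reduces to
\begin{equation}
X_2 \B_1 = 0.
\end{equation}

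The second step is to determine the smooth first integrals of $X_2$. The function $x$ is trivially killed by $X_2$, and a short computation shows
\begin{equation}
X_2\!\left(z - \tfrac{xy}{2}\right) = \tfrac{x}{2} - \tfrac{x}{2} = 0.
\end{equation}
Since $X_2$ is a nowhere vanishing smooth vector field on $\R^3$, its orbit foliation is a simple one-dimensional foliation, and the map $\Phi(x,y,z) = (x, y, z - xy/2)$ is a global diffeomorphism of $\R^3$ whose pushforward sends $X_2$ to $\partial_y$. In the new coordinates $(u,v,w) = (x, y, z - xy/2)$ the equation $X_2 \B_1 = 0$ becomes $\partial_v \tilde \B_1 = 0$, so $\tilde \B_1$ depends only on $(u, w)$. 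Pulling back, this yields a smooth $g \in C^\infty(\R^2)$ such that $\B_1(x,y,z) = g(x, z - xy/2)$, which is the claim.

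No step is particularly delicate here; the only thing to check carefully is that the vanishing of $\mathrm{div}_\mu(X_2)$ really does make the equation collapse to $X_2 \B_1 = 0$ without any zeroth-order term, which is where the choice of Popp's volume $\mu$ (as opposed to an arbitrary volume form) matters. After that, the conclusion is just the method of characteristics for a single linear transport equation with explicit complete integrals $x$ and $z - xy/2$.
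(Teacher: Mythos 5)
Your proposal is correct and follows essentially the same route as the paper: the closure condition \eqref{eq:maxwell} together with $\mathrm{div}_\mu(X_2)=c^1_{12}=0$ collapses to the transport equation $X_2\B_1=0$, which is then solved using the first integrals $x$ and $z-\tfrac{xy}{2}$. Your explicit global straightening $(u,v,w)=(x,y,z-\tfrac{xy}{2})$ is just a cleaner packaging of the paper's method-of-characteristics computation of the flow of $X_2$, and it handles the smoothness of $g$ transparently.
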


\begin{proof}
We start by observing that in the Heisenberg group the only non trivial commutator is given by $[X_{1},X_{2}]=X_{0}$. Then $c_{12}^{1}=c_{12}^{2}=0$, thus by \eqref{eq:divXi} and \eqref{eq:structurecoeff} it holds that $\mathrm{div}_\mu(X_2)=0$, where $\mu$ is the smooth measure associated with the Popp's volume. From \eqref{eq:maxwell}, we then obtain the following PDE:
\begin{equation}
\label{eq:PDE}
0=X_2(\B_1)=\left(\p_y+\frac{x}{2}\p_z\right)\B_1,
\end{equation} 
which can be solved by the method of characteristics.
More precisely, the function $\beta_{1}$ is constant along the flow of $X_{2}$. 
Computing explicitly the flow of $X_{2}$ we have
\begin{equation}\label{eq:moc}
\begin{cases}
\dot{x}=0\\
\dot{y}=1\\
\dot{z}=\frac{x}{2}
\end{cases}
\qquad \implies \qquad
\begin{cases}
x(t)=x(0)\\
y(t)=y(0)+s\\
z(t)=z(0)+\frac{x(0)}{2}t
\end{cases}.
\end{equation}
In particular, given an initial condition $\gamma(0)=(x_0,0,z_0)$ (defined on a surface transversal to $X_{1}$) we have that $\gamma(t)=e^{tX_{1}}\gamma(0)=(x_0,s,z_0+\frac{x_0}{2}s)$ and $$\B_1(\gamma(t))=\B_1(\gamma(0))=\B(x(t),0,z(t)-\frac{1}{2}x(t)y(t)).$$ From \eqref{eq:moc} it 
 is easy to see that both quantities $x$ and $z-\frac{1}{2}xy$ are constant along the flow.
\end{proof}

\begin{remark}
The previous consideration can be extended to $\B\in \Omega^2_H(\mathbb{H})$ horizontal magnetic field in the Heisenberg group of the form $\B=f(b_1dx+b_2dy)\wedge\omega$, with $f\in C^\infty(\mathbb{H})$ and $b_1,b_2\in\R$ such that $b_1^2+b_2^2=1$.
Then, taking into account the following change of variables
\begin{equation}
u=b_1x+b_2y,\qquad v=-b_2x+b_1y,
\end{equation}
with the same arguments, it is possible to show that there exists a suitable $g\in  C^\infty(\R^2)$ such that $f(u,v,z)= g\left(u,z-\frac{1}{2}uv\right)$.
\end{remark}

\section{Non-vanishing magnetic fields}
\label{s:nonzeromf}

In this section we study the bracket generating properties of the lifted distribution $\lift\distr$ and its abnormal curves in the case when $\beta$ is a non-vanishing horizontal magnetic field.

Recall that $\lift\distr$ is generated by $Y_1,Y_2$ in \eqref{eq:generatingfamily}, which is also an orthonormal frame for the metric. Here
$Y_0$ denotes the lift of the Reeb vector field as in  \eqref{eq:Y0}.

\begin{lemma}
\label{lem:step3}
Let $\B\in \Omega^2_H(M)$ be a horizontal magnetic field and let $A=A_1\nu_1+A_2\nu_2$ be a magnetic potential for $\B$.
For $i=1,2$, it holds that
 \begin{equation}
 \label{eq:[Yi,Y0]}
  [Y_i,Y_0]=\B_i\p_w \mod \lift \distr
 \end{equation}
In particular, if $\B$ is never vanishing,  the lifted distribution $\lift \distr$ is bracket generating  of step $3$. 
\end{lemma}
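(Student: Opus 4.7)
The plan is to compute $[Y_i, Y_0]$ directly from the definitions. Recall $Y_i = X_i + A_i \p_w$ for $i=1,2$ and $Y_0 = X_0 + \diff A(X_1,X_2)\p_w$, where the coefficients $A_i = A(X_i)$ and $\diff A(X_1,X_2)$ are smooth functions on $M$, hence independent of the fiber coordinate $w$. Using bilinearity of the Lie bracket, the identity $[f\p_w, X] = -X(f)\p_w$ for any $f\in C^\infty(M)$ and any $X$ lifted from $TM$, and $[\p_w,\p_w]=0$, one gets
\begin{equation*}
[Y_i, Y_0] = [X_i, X_0] + \bigl(X_i(\diff A(X_1,X_2)) - X_0(A_i)\bigr) \p_w.
\end{equation*}

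Next, I would invoke the commutation relations \eqref{eq:structurecoeff} to write $[X_i, X_0] = c_{i0}^1 X_1 + c_{i0}^2 X_2$, and then rewrite $X_j = Y_j - A_j \p_w$ to absorb the $X_j$ terms into $\lift\distr$. This yields
\begin{equation*}
[Y_i, Y_0] \equiv \bigl(X_i(\diff A(X_1,X_2)) - X_0(A_i) - c_{i0}^1 A_1 - c_{i0}^2 A_2\bigr)\p_w \mod \lift\distr.
\end{equation*}
The key step is then to recognize the coefficient of $\p_w$: by the formulas \eqref{eq:B1}--\eqref{eq:B2} established in Lemma~\ref{lem:d_H}, it is exactly $\B_i$. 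This proves \eqref{eq:[Yi,Y0]}.

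For the bracket generating statement, I would argue as follows. From Remark~\ref{rem:independenceofY_0}, equation \eqref{eq:[Y1,Y2]} gives $Y_0 \in \lift\distr^2$, so $\lift\distr^2 = \mathrm{span}\{Y_1, Y_2, Y_0\}$ has rank $3$. Applying \eqref{eq:[Yi,Y0]}, we see that $\lift\distr^3$ contains both $\B_1\p_w$ and $\B_2\p_w$ modulo $\lift\distr^2$. If $\B$ never vanishes, then at each point at least one of $\B_1, \B_2$ is nonzero, so $\p_w \in \lift\distr^3$. Combined with $Y_0 = X_0 + \diff A(X_1,X_2)\p_w \in \lift\distr^2$, this gives $X_0\in \lift\distr^3$, and hence $\lift\distr^3 = T\lift M$ at every point.

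The whole argument is essentially a calculation; the only nontrivial point is recognizing the coefficient of $\p_w$ as $\B_i$ by comparison with \eqref{eq:B1}--\eqref{eq:B2}. I do not anticipate a real obstacle beyond careful bookkeeping of the signs arising from $[A_i\p_w, X_0] = -X_0(A_i)\p_w$ versus $[X_i, \diff A(X_1,X_2)\p_w] = +X_i(\diff A(X_1,X_2))\p_w$.
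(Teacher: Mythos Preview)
Your proposal is correct and follows essentially the same route as the paper's proof: both compute $[Y_i,Y_0]=[X_i,X_0]+\bigl(X_i(\diff A(X_1,X_2))-X_0(A_i)\bigr)\p_w$ and then identify the $\p_w$-coefficient modulo $\lift\distr$ as $\B_i$. The only organizational difference is that you expand $[X_i,X_0]=c_{i0}^1X_1+c_{i0}^2X_2$, substitute $X_j=Y_j-A_j\p_w$, and then recognize the resulting coefficient directly from the intermediate formulas \eqref{eq:B1}--\eqref{eq:B2}, whereas the paper instead introduces the modified potential $A'=A+\diff A(X_1,X_2)\omega$ and applies Cartan's formula to obtain the frame-independent identity $[Y_i,Y_0]=[X_i,X_0]+A([X_i,X_0])\p_w+\B_i\p_w$; since $[X_i,X_0]$ is horizontal, the first two terms manifestly lie in $\lift\distr$. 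Your version is slightly more computational but equally valid, and the final bracket-generating argument is the same.
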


\begin{proof}
We first establish the following identity for $i=1,2$:
\begin{equation}
[Y_i,Y_0]=[X_i,X_0]+A([X_i,X_0])\p_w+\B_i\p_w.
\end{equation}
This yields \eqref{eq:[Yi,Y0]} since the distribution $\lift \distr$ is spanned by vector fields of the form $X + A(X) \partial_w$.
For $i=1,2$, using \eqref{eq:generatingfamily} and \eqref{eq:Y0}, we compute:
\begin{equation}
[Y_i,Y_0] = [X_i,X_0] + (X_i(\diff A(X_1,X_2)) - X_0(A_i)) \p_w.
\end{equation}
Define the $1$-form
$A' = A_1\nu_1 + A_2\nu_2 + \diff A(X_1,X_2) \omega$.
Then, for $i=1,2$, we can rewrite
\begin{equation}
X_i(\diff A(X_1,X_2)) - X_0(A_i) = X_i(A'(X_0)) - X_0(A'(X_i)).
\end{equation}
Recalling \eqref{eq:dH1}, we know that $\diff_H A = \diff A'$. Applying Cartan's formula \eqref{eq:cartan}, we obtain:
\begin{equation}
X_i(\diff A(X_1,X_2)) - X_0(A_i) = A'([X_i,X_0]) + \B_i.
\end{equation}
Since $\omega([X_i,X_0]) = 0$, then
$A'([X_i,X_0]) = A([X_i,X_0]),$
which verifies the desired identity \eqref{eq:[Yi,Y0]}.

Finally, since $\mathrm{span}\{Y_1,Y_2,Y_0\}=\lift \distr^{2}$ and the coordinate vector field $\p_w$ is independent of $\lift \distr^{2}$, we conclude that $[Y_i,Y_0]$ is independent of $\lift \distr^{2}$ if and only if $(\B_1,\B_2) \neq (0,0)$.
\end{proof}

Let $\B=\mg\in \Omega^2_H$ be a never vanishing horizontal magnetic field. We define the following frame for the distribution $\lift\distr$ associated with $\beta$
\begin{equation}\label{eq:F1F2}
F_1=\B_1Y_1+\B_2Y_2,  \qquad
F_2=-\B_2Y_1+\B_1Y_2.
\end{equation}

\begin{lemma}
For $F_1$, $F_2$ as in~\eqref{eq:F1F2} we have  that $[F_1,\lift\distr^2]\not\subseteq \lift\distr^2$  and $[F_2,\lift\distr^2]\subseteq \lift\distr^2.$
More precisely we have 
\begin{equation}
\label{eq:[F_1,Y_0]}
[F_1,Y_0]=\left(\B_1^2+\B_2^2\right)\p_w\mod \lift\distr,
\qquad [F_2,Y_0]\subseteq  \lift \distr.
\end{equation}
\end{lemma}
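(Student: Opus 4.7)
The plan is to expand both Lie brackets via the Leibniz rule and apply the preceding lemma, which gives $[Y_i,Y_0] = \beta_i \partial_w \mod \lift\distr$ for $i=1,2$.

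First I would write
\begin{equation}
[F_1,Y_0] = \B_1[Y_1,Y_0] + \B_2[Y_2,Y_0] - (Y_0\B_1)Y_1 - (Y_0\B_2)Y_2,
\end{equation}
\begin{equation}
[F_2,Y_0] = -\B_2[Y_1,Y_0] + \B_1[Y_2,Y_0] + (Y_0\B_2)Y_1 - (Y_0\B_1)Y_2.
\end{equation}
The final two terms in each expression already lie in $\lift\distr$, so they are negligible modulo $\lift\distr$. Substituting $[Y_i,Y_0] \equiv \B_i\partial_w \mod \lift\distr$ from \eqref{eq:[Yi,Y0]} immediately gives $[F_1,Y_0] \equiv (\B_1^2+\B_2^2)\partial_w \mod \lift\distr$ and $[F_2,Y_0] \equiv -\B_2\B_1\partial_w + \B_1\B_2\partial_w = 0 \mod \lift\distr$.

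The second statement, however, claims more than a congruence modulo $\lift\distr$: it asserts $[F_2,Y_0] \in \lift\distr$ outright. To upgrade the computation I would revisit the proof of the previous lemma, which actually produces the explicit decomposition
\begin{equation}
[Y_i,Y_0] = [X_i,X_0] + A([X_i,X_0])\p_w + \B_i\p_w = Z_i + \B_i\partial_w,
\end{equation}
where $Z_i := c_{i0}^1 Y_1 + c_{i0}^2 Y_2 \in \lift\distr$ is the horizontal lift of $[X_i,X_0]$. Plugging this precise identity into the expansion of $[F_2,Y_0]$, the $\partial_w$ contributions $-\B_2\B_1\partial_w + \B_1\B_2\partial_w$ cancel exactly, leaving $[F_2,Y_0] = -\B_2 Z_1 + \B_1 Z_2 + (Y_0\B_2)Y_1 - (Y_0\B_1)Y_2$, which is manifestly a section of $\lift\distr$.

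The only subtle point, and the one worth highlighting, is this distinction between the two statements in \eqref{eq:[F_1,Y_0]}: the first is only a congruence modulo $\lift\distr$ (which is all that is needed, since the goal is to detect a step-$3$ direction), whereas the second requires the exact cancellation of the $\partial_w$ components, obtained from the explicit form of $[Y_i,Y_0]$ rather than from \eqref{eq:[Yi,Y0]} alone. No other computation is delicate; everything reduces to the Leibniz rule and the preceding lemma.
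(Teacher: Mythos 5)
Your argument is correct and follows essentially the same route as the paper: expand $[F_i,Y_0]$ by the Leibniz rule, absorb the derivative terms $(Y_0\B_j)Y_j$ into $\lift\distr$, and insert \eqref{eq:[Yi,Y0]}. The only comment is that your ``upgrade'' for $[F_2,Y_0]$ is unnecessary --- since $\lift\distr$ is a $C^\infty$-module, the congruence $[F_2,Y_0]\equiv 0 \mod \lift\distr$ already literally means $[F_2,Y_0]\in\lift\distr$, which is how the paper concludes (your explicit decomposition via the lifts $Z_i$ of $[X_i,X_0]$ is correct but redundant); note also that deducing $[F_1,\lift\distr^2]\not\subseteq\lift\distr^2$ from the displayed formula uses the standing assumption $\B_1^2+\B_2^2\neq 0$ together with $\p_w\notin\lift\distr^2=\mathrm{span}\{Y_1,Y_2,Y_0\}$.
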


\begin{proof}
Since $F_1,F_2$ belong to $\lift\distr$, we are reduced to prove that $[F_1,Y_0]\not\subseteq\lift\distr^2$ and that $[F_2,Y_0]\subseteq\lift\distr^2$.
Recalling \eqref{eq:structurecoeff} and \eqref{eq:[Yi,Y0]}, we first compute
\begin{align}
[F_1,Y_0] &=\B_1[Y_1,Y_0]+\B_2[Y_2,Y_0] 
-X_0\left(\B_1\right)Y_1-X_0\left(\B_2\right)Y_2\\
&=\left(\B_1^2+\B_2^2\right)\p_w\mod \lift\distr.
\end{align}
Since by assumption $\left(\B_1^2+\B_2^2\right)\neq0$ and  $\lift{\distr}^2 = \mathrm{span}\left\{F_1,F_2,Y_0\right\}$, we deduce $[F_1,Y_0]\not\subseteq \lift\distr^2$.

Similarily, again using  \eqref{eq:[Yi,Y0]}, we obtain that
\begin{align}
[F_2,Y_0]&=-\B_2[Y_1,Y_0]+\B_1[Y_2,Y_0] \mod \lift\distr\\
&=-\B_2\B_1\p_w +\B_1\B_2\p_w=0 \mod \lift\distr.
\end{align}
which implies that $[F_2,Y_0] \in \lift \distr\subseteq \lift\distr^2$.
\end{proof}
\begin{remark} \label{r:rango1} 
Let  $\{\eta_1,\eta_{2},\omega\}$ be the dual frame corresponding to $\{\pi_*F_1,\pi_*F_2,X_0\}$, where $\pi:\lift M \to M$ is the canonical projection. We have that $\beta$ is ``rectified''  as follows
\begin{equation}
	\B=\|\B\|^2\eta_1\wedge\omega ,
\end{equation}
where $\|\B\|:M\to\R$ is defined as $\|\B\|=\sqrt{\B_1^2+\B_2^2}$ and it is called the \emph{magnitude} of $\B$.
\end{remark}
Now we can prove the main result of this section.
\begin{proposition}
\label{lem:abnormalsinBneq0}
Let $\B\in \Omega^2_H$ be a never vanishing horizontal magnetic field on $M$. 
A horizontal curve $\lift \gamma$ in $ \lift M$ is  an abnormal extremal trajectory if and only if its projection $\gamma$ is a horizontal curve on $M$ which is a characteristic curve of the magnetic field $\beta$ on $M$, i.e., satisfies $\iota_{\dot \gamma}\beta=0$. 
\end{proposition}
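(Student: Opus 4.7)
The plan is to combine Proposition~\ref{p:sympzero} with the bracket computations already available (Lemma~\ref{lem:step3} and the preceding lemma giving $[F_2,Y_0]\subseteq\lift\distr$) to pin down the abnormal directions in $\lift\distr$ as the line $\mathrm{span}\{F_2\}$, and then translate this to the condition $\iota_{\dot\gamma}\beta=0$ by projecting to $M$.

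First, let $\lift\lambda:[0,T]\to T^*\lift M\setminus\{0\}$ be an abnormal extremal lift along $\lift\gamma$ with $\dot{\lift\gamma}=u_1Y_1+u_2 Y_2$, so that $\langle\lift\lambda,Y_j\rangle\equiv 0$ for $j=1,2$. Differentiating these identities along the adjoint Hamiltonian flow and using the Poisson bracket properties of Remark~\ref{r:prope} together with \eqref{eq:[Y1,Y2]}, one gets
\begin{equation*}
	0=\frac{d}{dt}\langle\lift\lambda,Y_j\rangle=\sum_i u_i\langle\lift\lambda,[Y_i,Y_j]\rangle,
\end{equation*}
which, since $\lift\lambda$ vanishes on $\lift\distr$, reduces to $u_1\langle\lift\lambda,Y_0\rangle=u_2\langle\lift\lambda, Y_0\rangle=0$. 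On a non-constant curve this forces $\langle\lift\lambda, Y_0\rangle\equiv 0$, so $\lift\lambda$ annihilates $\lift\distr^2=\mathrm{span}\{Y_1,Y_2,Y_0\}$ and is thus a nowhere-vanishing multiple $\lift\lambda=\kappa\,\theta_w$ of the 1-form $\theta_w$ dual to $\partial_w$ in the frame $(Y_1,Y_2,Y_0,\partial_w)$.

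Next, differentiating $\langle\lift\lambda,Y_0\rangle\equiv 0$ once more and using \eqref{eq:[Yi,Y0]} together with $\lift\lambda|_{\lift\distr}=0$ gives
\begin{equation*}
	0=\sum_i u_i \langle\lift\lambda, [Y_i,Y_0]\rangle=\kappa\,(u_1\beta_1+u_2\beta_2).
\end{equation*}
Since $\kappa\neq 0$, the controls satisfy $(u_1,u_2)\propto(-\beta_2,\beta_1)$. Hence $\dot{\lift\gamma}$ is colinear with $F_2=-\beta_2 Y_1+\beta_1 Y_2$, and projecting to $M$ gives $\dot\gamma\propto -\beta_2 X_1+\beta_1 X_2$. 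A direct calculation using $\iota_v\omega=0$ for $v\in\distr$ shows that $\iota_v\beta=(a\beta_1+b\beta_2)\omega$ for $v=aX_1+bX_2$; hence this proportionality is equivalent to $\iota_{\dot\gamma}\beta=0$.

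For the converse, given a horizontal $\gamma$ on $M$ with $\iota_{\dot\gamma}\beta=0$, we lift it to $\lift\gamma$ which is tangent to $F_2$. We set $\lift\lambda\equiv\theta_w$ along $\lift\gamma$: this covector is nowhere zero, annihilates $Y_1,Y_2,Y_0$, and thanks to $u_1\beta_1+u_2\beta_2=0$ all the derivatives computed above vanish, so $\lift\lambda$ remains in $\lift\distr^\perp$ and is consistent with the adjoint equation. Proposition~\ref{p:sympzero} then identifies $\lift\gamma$ as an abnormal extremal trajectory. The main subtlety I expect is justifying carefully, in the converse direction, that $\lift\lambda=\theta_w$ actually solves the adjoint Hamiltonian ODE along $\lift\gamma$; equivalently, that the characteristic line field of $\sigma|_{\lift\distr^\perp}$ at each regular point corresponds under $\pi_*$ exactly to the horizontal line $\mathrm{span}\{F_2\}$, which is strongly suggested by the bracket computations.
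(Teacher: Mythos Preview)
Your argument is correct and takes a genuinely different route from the paper's. The paper proves the statement by computing the restriction of the canonical symplectic form to $\lift\distr^\perp$: it introduces the $1$-form $\tau=dw-A'$ dual to $\partial_w$ in the frame $(Y_1,Y_2,Y_0,\partial_w)$, observes that $d\tau=-\beta$, and then shows directly that on the set $\{\zeta_1=\zeta_2=\zeta_0=0\}$ one has $\sigma|_{\distr_0^\perp}=d\zeta_w\wedge\tau-\zeta_w\beta$, whose kernel is spanned by $-\beta_2Y_1+\beta_1Y_2$. Your approach instead differentiates the annihilation conditions $\langle\lift\lambda,Y_j\rangle=0$ along the adjoint flow and uses the bracket identities \eqref{eq:[Y1,Y2]} and \eqref{eq:[Yi,Y0]} already established. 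The paper's method is more intrinsic (the key identity $d\tau=-\beta$ makes the role of the magnetic field transparent and is reused later in Proposition~\ref{p:dbp}), while yours is more elementary and leans entirely on the Lie bracket computations rather than on any symplectic calculus.

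Regarding the subtlety you flag in the converse direction: your choice $\lift\lambda\equiv\theta_w$ along $\lift\gamma$ \emph{does} solve the adjoint equation, and this is easy to verify. Indeed, since $A_i$ and $dA(X_1,X_2)$ are functions on $M$ (independent of $w$), one has $[Y_i,\partial_w]=0$, so $\langle\lift\lambda,\partial_w\rangle$ is constant along the flow; and from $[Y_i,Y_0]=c_{i0}^1Y_1+c_{i0}^2Y_2+\beta_i\partial_w$ together with $u_1\beta_1+u_2\beta_2=0$, the evolution equations for $\langle\lift\lambda,Y_1\rangle,\langle\lift\lambda,Y_2\rangle,\langle\lift\lambda,Y_0\rangle$ form a homogeneous linear system with zero initial data. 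So the lift stays identically in $(\lift\distr^2)^\perp$, and your appeal to Proposition~\ref{p:sympzero} is justified.
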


\begin{remark}
Notice that, choosing a frame $X_{1},X_{2}$ for $\distr$ as before, characteristic curves $\gamma$ of the magnetic field $\beta$ on $M$ are parallel to $-\B_2X_1+\B_1X_2$. In particular, we observe that this implies $b(\dot \gamma)=0$ in equation \eqref{eq:dbmf2}, suggesting that along abnormal extremal trajectories the magnetic field does not affect the dynamics (see Remark~\ref{r:intdb}).
\end{remark}

\begin{proof} We consider on $\lift M$ the basis of the tangent space given by $\{Y_{1},Y_{2},Y_{0},\partial_{w}\}$ where as usual
\begin{align}
Y_{i}&=X_{i}+A_{i}\partial_{w},\qquad i=1,2, \label{eq:db20} \\
Y_{0}&=X_{0}+dA(X_{1},X_{2})\partial_{w}.
\end{align}
and in the above formulas we identify $T \lift M\simeq  TM \oplus T\R$. Similarly we can consider the identification $T^{*}\lift M\simeq  T^{*}M \oplus T^{*}\R$, hence we can treat the basis of 1-forms $\{\nu_{1},\nu_{2},\nu_{0}\}$ dual to $\{X_{1},X_{2},X_{0}\}$ as 1-forms on $\lift M$, where we recall that $\nu_0=\omega$ is the contact form  on $M$. 
Following this notation, the basis $\{\nu_{1},\nu_{2},\nu_{0},\tau\}$ is dual to $\{Y_{1},Y_{2},Y_{0},\partial_{w}\}$, where we set
\begin{equation}\label{eq:tau1db}
\tau=dw-A_{1}\nu_{1}-A_{2}\nu_{2}-dA(X_{1},X_{2})\nu_{0}.
\end{equation}
Notice that $\tau$ satisfies the identities
\begin{equation}\label{eq:taudb}
\tau=dw-A',\qquad d\tau=-\beta,
\end{equation}
where $A'$ is the modified  potential in such a way that $dA'=d_{H}A=\beta$ (cf.\ \eqref{eq:dH1}).

We now use Proposition~\ref{p:sympzero}. We first compute the symplectic form $\sigma$ as a two form on $T^{*}\lift M$.  To describe $\sigma$ we need a basis of its tangent space $T(T^{*}\lift M)$. Let $p:T^{*}\lift M\to \lift M$ the canonical projection onto the base. Consider the isomorphism for $\zeta\in T^{*}\lift M$
\begin{equation}\label{eq:isomorfismodb}
T_{\zeta}(T^{*}\lift M)\simeq T_{p(\zeta)}\lift M\oplus \ker p_{*,\zeta}\simeq 
\mathrm{span}\{Y_{1},Y_{2},Y_{0},\partial_{w},	\partial_{\zeta_{1}},\partial_{\zeta_{2}},\partial_{\zeta_{0}},\partial_{\zeta_{w}}	
\}, 
\end{equation} 
where $p_{*,\zeta}$ denotes the differential of $p$ at $\zeta$, and we denote by $\zeta_{i}$, for $i=1,2,0,w$, the linear on fiber functions associated to the basis given by $\{Y_{1},Y_{2},Y_{0},\partial_{w}\}$. Namely $\zeta_{i}=\langle \zeta, Y_{i}\rangle$ for $i=0,1,2,$ and $\zeta_{w}=\langle \zeta, \partial_{w}\rangle$. 

We have that $\sigma$ is the differential of the tautological 1-form $s=\zeta_{1}\nu_{1}+\zeta_{2}\nu_{2}+\zeta_{0}\nu_{0}+\zeta_{w}\tau$ (one can see  \cite[Section~4.2]{thebook} for more details on the symplectic form in this framework). We have to compute
 $$\sigma|_{\distr^{\perp}}=ds|_{\distr^{\perp}}=d\left(s|_{\distr^{\perp}}\right),$$
since the differential commutes with the restriction. Now recall that $\distr^{\perp}=H^{-1}(0)$ and in this case the Hamiltonian $H$ writes as
$$H=\frac{1}{2}(\zeta_{1}^{2}+\zeta^{2}_{2}),$$
so that on $\distr^{\perp}$ one has $\zeta_{1}=\zeta_{2}=0$. Hence
$$\sigma|_{\distr^{\perp}}=d\left(\zeta_{0}\nu_{0}+\zeta_{w}\tau\right).$$
Since Goh conditions for rank 2 distributions are always satisfied (cf.~\cite[Section~12.4]{thebook}), using \eqref{eq:[Y1,Y2]} one can see that
\begin{equation}\label{eq:zeta3}
\zeta_{1}=\zeta_{2}=0 \qquad \text{implies} \qquad \zeta_{0}=0.
\end{equation} 
Hence characteristic curves for $\sigma$ lives in $\distr_{0}^{\perp}:=\distr^{\perp}\cap \zeta_{0}^{-1}(0)$. We are reduced to compute \begin{align*}
\sigma|_{\distr_{0}^{\perp}}=d\left(\zeta_{w}\tau\right)&=d\zeta_{w}\wedge \tau+\zeta_{w}d\tau\\
&=d\zeta_{w}\wedge \tau-\zeta_{w}\beta,
\end{align*}
where in the last equality we used \eqref{eq:taudb}.
Notice that $\distr_{0}^{\perp}$ is a 5-dimensional manifold in $T^{*}M$. By considering the isomorphism \eqref{eq:isomorfismodb},
we can write
$$T\distr_{0}^{\perp}=\mathrm{span}\{Y_{1},Y_{2},Y_{0},\partial_{w},\partial_{\zeta_{w}}\}.$$
We write more explicitly
\begin{align*}
\sigma|_{\distr_{0}^{\perp}}=d\zeta_{w}\wedge \tau-\zeta_{w}(\beta_{1}\nu_{1}\wedge \nu_{0}+\beta_{2}\nu_{2}\wedge \nu_{0}),
\end{align*}
and we recall that (a) the distribution $\lift \distr=\mathrm{span}\{Y_{1},Y_{2}\}$ is contained in $\ker \tau$ by \eqref{eq:db20} and \eqref{eq:tau1db}, (b) $\zeta_{w}\neq 0$ due to identities \eqref{eq:zeta3} and the fact that the lift of the abnormal trajectory is never vanishing by Proposition~\ref{p:sympzero}. It is then simple to observe that the kernel of $\sigma|_{\distr_{0}^{\perp}}$ coincides with the vector $-\beta_{2}Y_{1}+\beta_{1}Y_{2}$, which projects on the kernel of the magnetic field $\beta$ as a 2-form.
\end{proof}

\begin{example}
Recall that in \cref{ex:fromHtoEintro} we built the classical Engel group equipping the Heisenberg group $\mathbb{H}$ with the magnetic field $\B=dx\wedge\omega$.
Since $\B$ is non-vanishing, by \cref{lem:abnormalsinBneq0} it follows that the abnormal curves in the lifted space are tangent to $Y_2$ and a parametrization of an abnormal curve $\gamma:[0,T]\to\lift M$ with $\gamma(0)=(x_0,y_0,z_0,w_0)\in\mathbb{E}$ is 
\begin{equation}
 \gamma(t)=e^{tY_2}(x_0,y_0,z_0,w_0)=\left(x_0,y_0+t,z_0-\frac{x_0}2t,w_0-\frac{x_0^2}{2}t\right).
\end{equation}
This is in accordance with the classical characterization of abnormal curves in the Engel group. 
\end{example}

\section{Vanishing magnetic fields}
\label{s:zeromf}
In \cref{lem:step3} we proved that if $\B$ is a non-vanishing magnetic field, then the lifted distribution $\lift\distr$ is bracket generating of step 3 and \cref{lem:abnormalsinBneq0} characterizes the corresponding abnormal curves. 
 In this section, we investigate the case when $\B$ vanishes in some region.
 
We denote with $\mathcal{Z}\subset M$ the zero locus of $\B$, i.e., the set where $\beta_{1}=\beta_{2}=0$, and we study the lifted \sr structure on $\lift M$ at $\mathcal{Z}\times \R$.
The first result we present can be seen as an extension of  \cref{lem:step3}.

\begin{proposition}
\label{lem:step}
Let $p\in \mathcal{Z}$, $k\in \mathbb{N}, k\geq 1$.
The step of the distribution $\lift\distr$ at $\{p\}\times \R \subset \lift M$ is equal to $k+3$ if and only if the following conditions are satisfied
\begin{itemize}
\item[(a)] There exists a choice of indexes $(i_0,\ldots,i_k)\in\{1,2\}^{k+1}$ such that 
\begin{equation}
X_{i_k}\ldots X_{i_{1}}\B_{i_0}(p)\neq 0;
\end{equation}
\item[(b)] For every choice of indexes $(i_0,\ldots,i_{j})\in\{1,2\}^{j+1}$ with $j=0,\ldots,k-1$ it holds that 
\begin{equation}
X_{i_{j}}\ldots X_{i_{1}}\B_{i_{0}}(p)= 0.
\end{equation}
\end{itemize}
\end{proposition}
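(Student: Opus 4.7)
The plan is to read the step of $\lift\distr$ at $\tilde p = (p,w)$ through the $1$-form $\tau = dw - A'$ introduced in the proof of Proposition~\ref{lem:abnormalsinBneq0}, which satisfies $\tau(Y_i)=0$ for $i=0,1,2$, $\tau(\p_w)=1$, and $d\tau = -\pi^*\B$, where $\pi:\lift M\to M$ is the canonical projection. Since $\lift\distr^2$ is a constant-rank-$3$ subbundle on which $\tau$ vanishes, the step of $\lift\distr$ at $\tilde p$ equals the smallest integer $n$ for which some iterated Lie bracket of $Y_1,Y_2$ of length $\leq n$ has non-zero $\tau$-component at $\tilde p$. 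Applying Cartan's formula to $\tau$ gives, for any vector fields $U,V$ on $\lift M$,
\begin{equation}
\tau([U,V]) = U(\tau(V)) - V(\tau(U)) + \B(\pi_* U,\pi_* V),
\end{equation}
which is the main recursive tool.

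Define the left-nested brackets
\begin{equation}
B^{(\ell+1)}_{i_\ell,\ldots,i_0} := [Y_{i_\ell},[Y_{i_{\ell-1}},\ldots,[Y_{i_0},Y_0]\ldots]]\in\lift\distr^{\ell+3}.
\end{equation}
By induction on $\ell$, using $\tau(Y_{i_\ell})=0$, the fact that $\tau(B^{(\ell)}_{\ldots})$ is a function on $M$ (so $Y_{i_\ell}$ acts on it as $X_{i_\ell}$), and that horizontality of $\B$ forces $\B(X_{i_\ell},\pi_* B^{(\ell)}_{\ldots})$ to be a function multiple of $\B_{i_\ell}$, I will establish
\begin{equation}
\tau\bigl(B^{(\ell+1)}_{i_\ell,\ldots,i_0}\bigr) = X_{i_\ell}\cdots X_{i_1}\B_{i_0} + R_\ell,
\end{equation}
where $R_\ell$ is a sum of function-coefficient terms each involving a horizontal derivative of some $\B_j$ of order at most $\ell-1$. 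A parallel induction on the bracket length $n$, applied to left-nested brackets of $Y_1,Y_2$ alone (which span $\lift\distr^n$ modulo $\lift\distr^{n-1}$), will then yield: for every iterated bracket $V$ of length $n$, the function $\tau(V)$ at $p$ is an $\R$-linear combination of horizontal derivatives of the $\B_j$'s at $p$ of order at most $n-3$.

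Together, these two inductions give the description
\begin{equation}
\tau(\lift\distr^{k+3}_{\tilde p}) = \mathrm{span}_\R\bigl\{X_{i_s}\cdots X_{i_1}\B_{i_0}(p) : 0\leq s\leq k,\, i_j\in\{1,2\}\bigr\},
\end{equation}
from which the proposition follows by comparing $n=k+2$ with $n=k+3$. The ``if'' direction is immediate: (b) kills every $R_\ell(p)$ with $\ell\leq k$, so by (a) one obtains $\tau(B^{(k+1)}_{i_k,\ldots,i_0})(p)\neq 0$, while every bracket of length $\leq k+2$ has $\tau$-value at $p$ given by a combination of vanishing derivatives. For the ``only if'' direction I argue by induction on the derivative order $j$: when the step equals $k+3$, the bracket $B^{(j+1)}_{i_j,\ldots,i_0}$ with $j\leq k-1$ lies in $\lift\distr^{k+2}$, so its $\tau$-value at $p$ vanishes, and the inductive vanishing of lower-order derivatives forces $R_j(p)=0$ and hence $X_{i_j}\cdots X_{i_1}\B_{i_0}(p)=0$; condition (a) then follows by taking any length-$(k+3)$ bracket with non-zero $\tau$ at $p$ and invoking the general bracket description. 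The delicate point is precisely this general description: although $\pi_* V$ may carry an $X_0$-component and $X_0$ applied to a horizontal function is not itself horizontal, in the recursion such components enter only through the term $\B(X_i,\pi_* V)$, which by horizontality of $\B$ equals the $X_0$-coefficient of $\pi_* V$ times $\B_i$ and therefore contributes only order-zero derivatives of $\B$; all genuine new derivatives come from the outer $X_{i_\ell}$ acting on $\tau(\cdot)$ and hence stay horizontal, preserving the order bound $n-3$.
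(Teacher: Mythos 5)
Your proposal is correct, but it takes a genuinely different route from the paper's proof. You argue dually: identifying $\lift\distr^2$ with $\ker\tau$ for $\tau=dw-A'$, $d\tau=-\pi^*\B$, and reading the step off the $\tau$-values of iterated brackets via the Cartan recursion $\tau([U,V])=U(\tau(V))-V(\tau(U))+\B(\pi_*U,\pi_*V)$; the top-order-plus-remainder formula for the nested brackets $B^{(\ell+1)}$, the triangular elimination of the remainders $R_\ell(p)$, and the order bound $n-3$ for general brackets (where, as you note, the $X_0$-components only enter through $\B(X_{i},\pi_*V)=\B_{i}\,\omega(\pi_*V)$ and hence contribute only order-zero terms) all check out, so both implications follow as you describe. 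The paper instead computes the flag primally: from $[Y_i,Y_0]=\B_i\p_w \bmod \lift\distr$ and the exact identity $[Y_i,g\p_w]=(X_ig)\p_w$ it obtains the module formula $\lift\distr^{k+3}=\mathrm{span}\{Y_1,Y_2,Y_0,(X_{i_j}\cdots X_{i_1}\B_{i_0})\p_w\}$, with no remainder terms at any stage because the new generators are already pure $\p_w$-multiples whose coefficients get differentiated horizontally on the nose; evaluating at $\{p\}\times\R$ then yields the equivalence in one stroke. Your route costs extra bookkeeping (the remainders, plus a separate spanning statement reducing arbitrary brackets to left-nested ones in order to get the ``only if'' direction and the upper bound on the step), but it buys a coordinate- and potential-light criterion expressed entirely through $\tau$ and $d\tau=-\B$, which makes transparent why horizontal derivatives of the magnetic coefficients are the relevant invariants; the paper's argument is shorter because it exploits the $w$-translation invariance of the lifted frame directly.
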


\begin{proof}
Let $A$ be a horizontal potential for $\B$, i.e., such that $\B=\diff_H A$.
Let $Y_1,Y_2$ be the generating family for $\lift\distr$ as in \eqref{eq:generatingfamily},
and let $Y_0$ be as in \eqref{eq:Y0}.
It holds that 
\begin{equation}
	\lift\distr^2=\mathrm{span}\left\{Y_1,Y_2,Y_0\right\}.
\end{equation}
Moreover, for $i\in\{1,2\}$, identity \eqref{eq:[Yi,Y0]} gives
\begin{equation}
[Y_{i},Y_0]=\B_{i} \p_w \mod \lift\distr.
\end{equation}
Since $\p_w$ is independent from $\lift\distr^2$, we have that 
\begin{equation}
	\lift\distr^3=\mathrm{span}\left\{Y_1,Y_2,Y_0,\B_1\p_w,\B_2\p_w\right\}.
\end{equation}
We deduce that $\lift \distr^3=\lift\distr^2$ at $ \mathcal{Z}\times \R$ since $\beta_{1}=\beta_{2}=0$ on $\mathcal{Z}$. 
Recursively, one finds that, for $k\in \mathbb{N}$, $k\geq 1$
\begin{equation}
	\lift\distr^{k+3}=\mathrm{span}\left\{ Y_1,Y_2,Y_0, 
							\left(X_{i_j}\ldots X_{i_{1}}\B_{i_0}\right) \p_w 
							\mid j=0,\ldots,k, \ (i_0,\ldots,i_j)\in\{1,2\}^{j+1}
							\right\}.
\end{equation} 
It follows that $\lift\distr^{k+3}=\lift\distr^2$ on $\mathcal Z\times \R$ if and only if $X_{i_j}\ldots X_{i_{1}}\B_{i_0}(p)=0$ for all $j=0,\ldots,k$ and $ (i_0,\ldots,i_j)\in\{1,2\}^{j+1}$. This concludes the proof since $\lift\distr^{k+3}\neq \lift\distr^2$ implies $\lift\distr^{k+3}=T\lift M$.
\end{proof}

\begin{remark}
We stress that if there exists a point $p\in  \mathcal{Z}$ such that any horizontal derivation of any order of $\B_1$ and $\B_2$ vanishes at such a point, then the lifted distribution $\lift \distr$ is not bracket generating at $\{p\}\times \R$. Moreover, since any derivation can be written as a combination of horizontal ones, this is equivalent to require that any derivation of any order vanishes at $p$.

Finally, let us stress that the statement of Proposition~\ref{lem:step} is actually independent of the chosen (orthonormal) frame for $\distr$ in $M$. 
\end{remark}

Under the assumption that the magnetic field generates a sub-Riemannian structure, then every horizontal curve in the lift of the zero locus is an abnormal extremal trajectory.

\begin{proposition}\label{p:dbp}
Let  $\lift \gamma$ be a horizontal curve in $\lift M$ contained in  $\mathcal{Z}\times \R$. Then $\lift \gamma$ is an abnormal extremal trajectory for the lifted \sr structure on $\lift{M}$.
\end{proposition}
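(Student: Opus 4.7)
The plan is to exhibit an explicit abnormal lift of $\lift\gamma$, applying the symplectic characterization in Proposition~\ref{p:sympzero} exactly in the framework set up in the proof of Proposition~\ref{lem:abnormalsinBneq0}. Concretely, I would construct a never-vanishing Lipschitz curve $\zeta(t)$ in $T^*\lift M$ projecting onto $\lift\gamma$, lying in $H^{-1}(0)$, and whose velocity is tangent to $\ker\sigma|_{H^{-1}(0)}$. The key structural input is that $\beta$ vanishes identically along $\lift\gamma$ because $\lift\gamma\subset\mathcal Z\times\R$; this causes the relevant restricted symplectic form to degenerate drastically.

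I would work in the dual basis $\{\nu_1,\nu_2,\nu_0,\tau\}$ of $\{Y_1,Y_2,Y_0,\partial_w\}$ introduced in~\eqref{eq:tau1db}, with associated fiber coordinates $\zeta_1,\zeta_2,\zeta_0,\zeta_w$, and define the candidate lift $\zeta(t)$ by $\zeta_1(t)=\zeta_2(t)=\zeta_0(t)=0$ and $\zeta_w(t)=1$ for every $t$. This curve is nowhere zero, satisfies $H(\zeta(t))=0$, and moreover sits in $\distr_0^\perp=H^{-1}(0)\cap\{\zeta_0=0\}$, so the Goh condition~\eqref{eq:zeta3} is automatic. Since all fiber coordinates of $\zeta$ are constant along $\lift\gamma$, under the splitting~\eqref{eq:isomorfismodb} the velocity $\dot\zeta$ coincides with the horizontal lift of $\dot{\lift\gamma}\in\lift\distr$.

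The crucial verification is that $\dot\zeta\in\ker\sigma|_{H^{-1}(0)}$, which reduces to inspecting $\sigma|_{\distr_0^\perp}$. Exactly as in the proof of Proposition~\ref{lem:abnormalsinBneq0}, the tautological $1$-form restricted to $\distr_0^\perp$ equals $\zeta_w\tau$, so that using $d\tau=-\beta$ (see~\eqref{eq:taudb}) one obtains
$$\sigma|_{\distr_0^\perp}=d\zeta_w\wedge\tau-\zeta_w\beta.$$
Along $\zeta(t)$ we have $\beta=0$ pointwise by hypothesis, so this reduces to $d\zeta_w\wedge\tau$. Since $\zeta_w$ is constant and $\tau$ annihilates $\lift\distr$, both $d\zeta_w(\dot\zeta)$ and $\tau(\dot\zeta)$ vanish, giving $i_{\dot\zeta}\sigma|_{\distr_0^\perp}=0$. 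Hence $\zeta$ is a characteristic curve of $\sigma|_{H^{-1}(0)}$, and Proposition~\ref{p:sympzero} identifies $\lift\gamma$ with (a reparametrization of) an abnormal extremal trajectory.

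I do not expect a real obstacle, as the argument is a direct specialization of the one in the non-vanishing case. The only conceptual point worth stressing is the underlying mechanism: in the non-vanishing case $\ker\sigma|_{\distr_0^\perp}$ was one-dimensional and selected the characteristic direction $-\beta_2 Y_1+\beta_1 Y_2$, whereas in the zero locus the term $\zeta_w\beta$ disappears and $\ker\sigma|_{\distr_0^\perp}$ expands to contain all of $\lift\distr$, so that every horizontal velocity over $\mathcal Z\times\R$ is admissible as an abnormal.
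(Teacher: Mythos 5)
Your proposal is correct and follows essentially the same route as the paper: you restrict the symplectic form to $\distr_0^{\perp}$ exactly as in the proof of Proposition~\ref{lem:abnormalsinBneq0}, observe that the term $\zeta_w\beta$ vanishes over $\mathcal{Z}\times\R$, and exhibit the lift with $\zeta_1=\zeta_2=\zeta_0=0$ and $\zeta_w$ a nonzero constant as a characteristic curve of $\sigma|_{\distr_0^{\perp}}$, concluding via Proposition~\ref{p:sympzero}. The only difference is that you spell out the contraction $i_{\dot\zeta}(d\zeta_w\wedge\tau)=0$ explicitly, which the paper leaves implicit.
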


\begin{proof} The same argument given in the proof of Proposition~\ref{lem:abnormalsinBneq0} yields for the restriction of the symplectic form  $\distr_{0}^{\perp}:=\distr^{\perp}\cap \zeta_{0}^{-1}(0)$ 
\begin{align*}
\sigma|_{\distr_{0}^{\perp}}=d\left(\zeta_{w}\tau\right)&=d\zeta_{w}\wedge \tau+\zeta_{w}d\tau\\
&=d\zeta_{w}\wedge \tau-\zeta_{w}\beta.
\end{align*}
Restricting to curves on $T^{*}\lift M$ which projects on $\mathcal{Z}\times \R$, then $\sigma|_{\distr_{0}^{\perp}}=d\zeta_{w}\wedge \tau$.
Denoting points on $T^{*}\lift M$ as tuples $(\lift p,\zeta_{1},\zeta_{2},\zeta_{0},\zeta_{w})$ then the lift $(\lift \gamma(t),0,0,0,c)$ of the curve $\lift \gamma(t)$ on $\mathcal{Z}\times \R\subset \lift M$, for some $c\neq 0$, is a characteristic curve of $\sigma|_{\distr_{0}^{\perp}}$. Hence $\lift \gamma$ is an abnormal extremal trajectory.
\end{proof}

\begin{remark} Abnormal extremal trajectory can also be a concatenation of a curve in the zero locus of the magnetic field and a characteristic curve of $\beta$ where $\beta$ is non-vanishing. An explicit example of such an abnormal extremal trajectory is provided in Example~\ref{ex:crossing1}.
\end{remark}

In the following we analyze the step of the lifted distribution at $ \mathcal{Z} \times \R \subset \lift M$ in relation to the rank of the map $(\diff \B_1,\diff \B_2):TM\to\R^2$. Namely, we consider $p\in  \mathcal{Z}$ and the rank of the linear map $\restr{(\diff \B_1,\diff \B_2)}{p}:T_pM\to\R^2$.

\begin{proposition}
\label{lem:step-rank}
Let $\B=\mg$ be a horizontal magnetic field. Fix $p\in  \mathcal{Z}$. 
\begin{enumerate}[(i)]
\item \label{item:step2} If $\mathrm{rank}\restr{\left(\diff \B_1,\diff \B_2\right)}{p}=2$, then the step of $\lift \distr$ at $\{ p \}\times \R \subset \lift M$ is $4$;
\item \label{item:step1} If $\mathrm{rank}\restr{\left(\diff \B_1,\diff \B_2\right)}{p}=1$, then the step of $\lift \distr$ at $\{ p \}\times \R \subset \lift M$ is $4$ or $5$;
\item \label{item:step0} If $\mathrm{rank}\restr{\left(\diff \B_1,\diff \B_2\right)}{p}=0$, then the step of $\lift \distr$ at $\{ p \}\times \R \subset \lift M$ is $\geq 5$.
\end{enumerate}
\end{proposition}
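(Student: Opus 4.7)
The plan is to translate the rank hypothesis on $(\diff\B_1,\diff\B_2)$ at $p$ into a vanishing/non-vanishing statement for iterated horizontal derivatives of $\B_1,\B_2$, so that the conclusion follows by directly invoking \cref{lem:step}. The bridge is the Cartan expansion
\begin{equation}
\diff\B_j = X_1(\B_j)\nu_1 + X_2(\B_j)\nu_2 + X_0(\B_j)\omega, \qquad j=1,2,
\end{equation}
which represents $(\diff\B_1,\diff\B_2)|_p$ as a $3\times 2$ matrix that naturally splits into a horizontal block $H=(X_i\B_j(p))_{i,j\in\{1,2\}}$ and a vertical row $V=(X_0\B_1(p),X_0\B_2(p))$. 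Since the two columns of the vertical part both lie in $\R\omega$, the matrix has rank two only if $H\neq 0$, has rank zero exactly when $H=0$ and $V=0$, and has rank one in all remaining configurations.

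With this dictionary in place, cases \eqref{item:step2} and \eqref{item:step0} are essentially immediate. In case \eqref{item:step2}, rank two forces $H\neq 0$ and hence some $X_i\B_j(p)\neq 0$, so \cref{lem:step} with $k=1$ gives step $4$. In case \eqref{item:step0}, every first-order horizontal derivative vanishes at $p$, so \cref{lem:step} forces step $\geq 5$.

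The substantive case is \eqref{item:step1}, and the conceptual difficulty is that the rank condition mixes first-order horizontal derivatives (the block $H$) with the first-order vertical derivatives (the row $V$), while \cref{lem:step} only sees horizontal ones. When $H\neq 0$ the dictionary yields step $4$ at once, so the delicate subcase is $H=0$ with $V\neq 0$: here the step is at least $5$ by \cref{lem:step}, and the task reduces to producing a non-vanishing second-order horizontal derivative at $p$ in order to exclude step $\geq 6$. I expect this conversion from vertical to iterated horizontal data to be the main obstacle. The key tool is the bracket relation $[X_1,X_2]=c^1_{12}X_1+c^2_{12}X_2+X_0$ from \eqref{eq:structurecoeff}: applying both sides to $\B_j$ at $p$ and using $H|_p=0$ to kill the $c^k_{12}X_k\B_j(p)$ contributions gives
\begin{equation}
X_0\B_j(p) = X_1X_2\B_j(p) - X_2X_1\B_j(p).
\end{equation}
Since $V\neq 0$, some $X_0\B_j(p)$ is non-zero, hence at least one of $X_1X_2\B_j(p)$ or $X_2X_1\B_j(p)$ must be non-zero. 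A second application of \cref{lem:step}, now with $k=2$, then pins the step down to $5$ and closes case \eqref{item:step1}.
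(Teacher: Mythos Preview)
Your proof is correct and follows essentially the same route as the paper: both represent $(\diff\B_1,\diff\B_2)|_p$ as a matrix in the frame $X_1,X_2,X_0$, read off that rank $2$ forces some $X_i\B_j(p)\neq 0$ and rank $0$ forces all $X_i\B_j(p)=0$, and in the rank-$1$ subcase $H=0$, $V\neq 0$ use the identity $[X_1,X_2]\B_j(p)=X_0\B_j(p)$ (from \eqref{eq:structurecoeff} with the first-order horizontal derivatives killed) to produce a non-vanishing second-order horizontal derivative, so that \cref{lem:step} gives step exactly $5$. Your block decomposition $H$/$V$ is a slightly cleaner way to phrase the same linear-algebra observation the paper makes about the columns of the matrix.
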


\begin{proof}
First observe that the matrix representing  $\left(\diff \B_1,\diff \B_2\right)$ written with respect to the frame $X_1,X_2,X_0$, is given by
\begin{equation}
\label{eq:matrixDB}
 \left( \begin{array}{ccc}
 X_1\B_1 &  X_2\B_1 &  X_0\B_1 \\
  X_1\B_2 &  X_2\B_2 &  X_0\B_2 
 \end{array} \right).
\end{equation}

\ref{item:step2}.
First, assume that $\mathrm{rank}\left(\diff \B_1,\diff \B_2\right)=2$ at $p$. In this case at least two columns of \eqref{eq:matrixDB} evaluated at $p$ are independent and not vanishing.  
Hence, there exist $i,j\in\{1,2\}$ such that $X_i\B_j(p) \neq 0$ and the conclusion holds by \cref{lem:step}.

\ref{item:step1}.
Let us now assume $\mathrm{rank}\left(\diff \B_1,\diff \B_2\right)=1$ at $p$, then at least one column in \eqref{eq:matrixDB} evaluated at $p$ is not identically zero. 
If this happens for one of the first two columns, we can argue as in the proof of (i), and the step of $\lift \distr$ at ${\{p\}\times \R}$ is $4$.
If the first two columns are both identically zero, any first order horizontal derivation of $\B_1$ and $\B_2$ is vanishing at $p$, and there exists $i\in \{1,2\}$  such that $X_0\B_i(p)\neq 0$. 
Recalling \eqref{eq:structurecoeff}, we have that
\begin{equation}
[X_1,X_2]\B_i(p)=c^1_{12}X_1\B_i(p)+c^2_{12}X_2\B_i(p)+X_0\B_i(p)=X_0\B_i(p).
\end{equation}
Since in addition $[X_1,X_2]\B_i=X_1X_2\B_i-X_2X_1\B_i$, we conclude that at least a second order derivative does not vanish at $p$. 
The step at $\lift \distr$ at ${\{p\}\times \R}$ is then $5$.

\ref{item:step0}.
Assume $\mathrm{rank}\left(\diff \B_1,\diff \B_2\right)=0$ at $p$, then the matrix \eqref{eq:matrixDB} is identically zero when evaluated at $p$. Hence, for all $i,j\in\{1,2\}$ we have $X_i\B_j(p)= 0$. \cref{lem:step} implies that the step of $\lift\distr$ at $\{p\}\times\R$ is at least $5$.
\end{proof}

We illustrate the content of \cref{lem:step-rank} on some examples of horizontal magnetic fields $\B$ on the Heisenberg group $\mathbb{H}$ (we refer to \cref{ex:fromHtoEintro} for the notation in $\mathbb{H}$).

\begin{example}[The rank-2 case] \label{ex:rank2}
If $\mathrm{rank}\left(\diff\B_1,\diff\B_2\right)=2$ at a point $p\in \mathcal{Z}$, then $\mathcal{Z}$ is (locally) a curve regularly defined by the map $\left(\B_1,\B_2\right):M\to \R^2$.
We highlight that the curve being horizontal or not does not affect the step of $\lift \distr$ at $\mathcal{Z}\times \R$, which is always equal to $4$.

Two representative examples on $\mathbb{H}$ are given by the following choices:
\begin{equation}
\B=x\,dx\wedge\omega+y\,dy\wedge\omega, \qquad \B'=\left(z-\frac{xy}{2}\right)dx\wedge\omega+y\,dy\wedge\omega.
\end{equation} 
The zero locus $\mathcal{Z}$ of  $\B$ is the non-horizontal curve $\{x=y=0\}$, the integral curve of $X_0$. 
The zero locus $\mathcal{Z}'$ of $\B'$ is the horizontal curve $\{z=y=0\}$, the integral curve of $X_1$.
\end{example}

\begin{example}[Step variation in the rank-1 case]
\label{ex:rank1}
We show that if $\mathrm{rank}\left(\diff\B_1,\diff\B_2\right)=1$ on $\mathcal{Z}$, the step of $\lift\distr$ is not necessarily constant on $\mathcal{Z}\times\R$. 
Let us consider on $\mathbb{H}$
 \begin{equation}
 	\B=4z\,dx\wedge\omega+x^2\,dy\wedge\omega.
 \end{equation}
 The zero locus is the curve $\mathcal{Z}=\{(0,y,0)\mid y\in \R\}$, so that, writing  $ \left( \diff\B_1, \diff\B_2\right)$ with respect to the frame $X_1,X_2,X_0$ given in \cref{ex:Heisenberg}, we have
  \begin{equation}
 \left( \diff\B_1, \diff\B_2\right)=
 \left(\begin{array}{ccc}
 -2y & 2x &4 \\
 2x & 0 &0
 \end{array}\right),
 \qquad
  \restr{\left( \diff\B_1, \diff\B_2\right)}{\mathcal{Z}}=
 \left(\begin{array}{ccc}
 -2y & 0 & 4\\
 0 & 0 & 0
 \end{array}\right).
 \end{equation} 
Hence, $\mathrm{rank}\left(\diff \B_1,\diff \B_2\right)=1$ at $p\in \mathcal{Z}$.
We have that  the step of $\lift \distr$ is $5$ at $(0,0,0)\times \R$ and $4$ at points $(0,y,0)\times \R$ with $y\neq 0$. 
Indeed, if $y\neq 0$, $X_1\B_1(0,y,0)=2y\neq 0$ implying that the step is $4$ by Proposition~\ref{lem:step}. 
For $y=0$, we have that $X_1\B_j(0,0,0)=X_2\B_j(0,0,0)=0$ and $X_2X_1\B_1(0,0,0)=-2$, yielding step 5.
\end{example}

\begin{example}[The rank-0 case]
\label{exa:r0}
We exhibit an example where $\mathrm{rank}{\left(\diff \B_1,\diff \B_2\right)}=0$ in $\mathcal{Z}$, for which the \sr structure have arbitrary step $\geq 5$ at $\mathcal{Z}\times\R$.

Fix $n\geq 2$ and consider $\B=\frac{x^n}{n!}\,dx\wedge\omega$ on the Heisenberg group $\mathbb{H}$. 
The zero locus of $\beta$ is given by the surface $\mathcal{Z}=\{x=0\}$. Since $n\geq 2$, it is easy to check that the rank is zero at $\mathcal{Z}$.

Let $p\in\mathcal{Z}$, we have that $X_{i_k}\ldots X_{i_1}\B_1(p)=X_{i_k}\ldots X_{i_1}\B_2(p)=0$ for any $0\leq k \leq n-1$ and choice of indices $(i_k,\ldots,i_1)\in \{1,2\}^{k}$. 
Being $X_1^n\B_1=\p_x^n \frac{x^n}{n!}(p)=1$, we conclude by Proposition~\ref{lem:step} that the step of $\lift\distr$ at $\mathcal{Z}\times\R$ is $n+3$.

\end{example}

\subsection{Characteristic points in the rank-1 case}

If $\restr{\left(\diff \B_1,\diff \B_2\right)}{p}:T_pM\to \R^2$ has rank 1 at $p\in \mathcal{Z}$, there exists $i\in \{1,2\}$ such that $\diff \B_i|_{p}\neq 0$.
Therefore, up to restricting the domain  to an open neighborhood of $p$, it is well defined a regular surface $\Sigma=\B_{i}^{-1}(0)$ that contains the zero locus $\mathcal{Z}$ of the magnetic field $\beta$ around $p$.

Recall that, given $\Sigma$ a surface in $M$, we say that a point $p\in \Sigma$ is \emph{characteristic} if  $\distr_p=T_p\Sigma$.
Denote with $\mathrm{Char}(\Sigma)$ the set of characteristic points in $\Sigma$.
 The following result characterizes the step of $\lift \distr$ in geometric terms.

\begin{proposition}\label{prop:rank1}
Let $\B=\mg\in\Omega^2_H(M)$ be a horizontal magnetic field such that $\mathrm{rank}\restr{\left(\diff\B_1,\diff\B_2\right)}{p}=1$ with $p\in \mathcal{Z}$.
Let $i\in \{1,2\}$ such that $\restr{\diff \B_i}{p}\neq 0$
and let $\Sigma$ be the smooth surface locally defined by $\B_i$.
The following are equivalent
\begin{enumerate}[(i)]
	\item \label{item:surface2} $p$ is a characteristic point in $\Sigma$;
	\item \label{item:surface3} $\lift\distr$ has step $5$ at $\{p\}\times \R$.
\end{enumerate}

\end{proposition}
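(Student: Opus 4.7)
The plan is to combine the structural criterion for the step of $\lift{\distr}$ already established in Proposition~\ref{lem:step} (and refined in the proof of Proposition~\ref{lem:step-rank}) with the geometric meaning of being a characteristic point of $\Sigma$. Concretely, under the standing rank-1 hypothesis, Proposition~\ref{lem:step-rank}\ref{item:step1} tells us the step at $\{p\}\times\R$ is either $4$ or $5$: it is $4$ exactly when at least one of the horizontal first-order derivatives $X_k\B_j(p)$, $j,k\in\{1,2\}$, is nonzero, and it is $5$ precisely when all four such derivatives vanish at $p$ (while some higher-order derivation is nonzero, which will be automatic here). So the whole task reduces to showing that the simultaneous vanishing of all $X_k\B_j(p)$ is equivalent to $p\in\mathrm{Char}(\Sigma)$.

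For the direction $\ref{item:surface2}\Rightarrow\ref{item:surface3}$, I would assume $\distr_p=T_p\Sigma$. Since $\Sigma=\B_i^{-1}(0)$ is cut out regularly by $\B_i$, we have $T_p\Sigma=\ker \diff\B_i|_p$, hence $X_1\B_i(p)=X_2\B_i(p)=0$. Now I would invoke the rank-1 assumption: writing $\{i,i'\}=\{1,2\}$, since $\diff\B_i|_p\neq 0$ the linear dependence forces $\diff\B_{i'}|_p=\lambda\,\diff\B_i|_p$ for some $\lambda\in\R$, and evaluating on $X_1,X_2$ gives $X_k\B_{i'}(p)=\lambda X_k\B_i(p)=0$ as well. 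Therefore every horizontal first-order derivation of $\B_1,\B_2$ vanishes at $p$, and by Proposition~\ref{lem:step-rank}\ref{item:step1} the step is at least $5$. To upgrade ``$\geq 5$'' to exactly $5$ I would note that $\diff\B_i|_p\neq 0$ together with the vanishing of $X_1\B_i(p),X_2\B_i(p)$ forces $X_0\B_i(p)\neq 0$, and then use exactly the identity $[X_1,X_2]\B_i(p)=X_0\B_i(p)\neq 0$ from the proof of Proposition~\ref{lem:step-rank}\ref{item:step1}, which delivers a non-vanishing second-order horizontal derivative and hence step equal to $5$ via Proposition~\ref{lem:step}.

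For the converse $\ref{item:surface3}\Rightarrow\ref{item:surface2}$, I would simply run Proposition~\ref{lem:step} in reverse: if $\lift{\distr}$ has step $5$ at $\{p\}\times\R$, then condition (b) of Proposition~\ref{lem:step} for $k=2$ forces all horizontal first-order derivatives $X_k\B_j(p)$ to vanish. In particular $X_1\B_i(p)=X_2\B_i(p)=0$, which means $\distr_p\subset\ker\diff\B_i|_p=T_p\Sigma$. Since both $\distr_p$ and $T_p\Sigma$ have dimension $2$, equality holds and $p$ is characteristic.

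There is no real obstacle here; the only step that requires a little care is justifying that in the first implication the step does not jump above $5$, and that is exactly handled by the bracket computation $[X_1,X_2]\B_i=X_1X_2\B_i-X_2X_1\B_i$ already exploited in the proof of Proposition~\ref{lem:step-rank}, combined with the observation that the rank-1 assumption together with $X_1\B_i(p)=X_2\B_i(p)=0$ forces $X_0\B_i(p)\neq 0$.
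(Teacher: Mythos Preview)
Your proposal is correct and follows essentially the same approach as the paper: both reduce the equivalence to showing that $p$ is characteristic in $\Sigma$ if and only if all four horizontal first-order derivatives $X_k\B_j(p)$ vanish (using the rank-$1$ hypothesis to pass from $\B_i$ to $\B_{i'}$), and then appeal to Proposition~\ref{lem:step} together with the step bound from Proposition~\ref{lem:step-rank}\ref{item:step1}. The only cosmetic difference is that the paper simply cites Proposition~\ref{lem:step-rank}\ref{item:step1} to cap the step at $5$, whereas you re-derive the bracket argument $[X_1,X_2]\B_i(p)=X_0\B_i(p)\neq 0$ explicitly; also note that your phrase ``by Proposition~\ref{lem:step-rank}\ref{item:step1} the step is at least $5$'' should really cite Proposition~\ref{lem:step} for the ``at least'' part.
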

\begin{remark}
Using the notation of Proposition~\ref{prop:rank1} above, we observe that a point $p\in \Sigma$ is characteristic if and only if $\lift\distr \subset T\Sigma\times \R$ at $\{p\}\times \R$. The inclusion here is crucial since in $\lift M$ the lifted distribution $\lift \distr$ is two-dimensional while $\Sigma\times \R$ is three-dimensional.

\end{remark}

\begin{proof}
Without loss of generality, we can assume $i=1$. The rank assumption implies that $\restr{\diff \B_2}{p} = c \restr{\diff \B_1}{p}$ for some $c\in \R$. Consequently,  $\ker \diff \B_1|_{p} \subseteq \ker \diff \B_2|_{p}$.

A point $p \in \Sigma=\beta_{1}^{-1}(0)$ is characteristic if and only if $X_1 \B_1(p) = X_2 \B_1(p) = 0$. Due to the inclusion of the kernels, $p$ is characteristic in $\Sigma$ if and only if $X_i \B_j(p) = 0$ for all  $i, j \in \{1,2\}$.

By \cref{lem:step}, the step of $\lift \distr$ at  $\{p\} \times \R$ is larger or equal than $5$.  
By \cref{lem:step-rank} the assumption $\mathrm{rank}\left(\diff \B_1, \diff \B_2\right) = 1$ at a point $p \in \mathcal{Z}$ implies that the step of $\lift \distr$ must be either $4$ or $5$. We conclude that the step is precisely $5$.
\end{proof}

\begin{remark}
Let us consider again the horizontal magnetic field \( \B = 4z \, dx \wedge \omega + x^2 \, dy \wedge \omega \) on the Heisenberg group $\mathbb{H}$, as studied in \cref{ex:rank1}. The magnetic field \( \B \) satisfies the hypotheses of \cref{prop:rank1}, with the surface \( \Sigma \) being regularly defined by the zero level set of \( \B_1 = z \).

Since \( (0,0,0) \in \mathcal{Z} \) is the unique characteristic point of \( \Sigma=\{z=0\} \), the conclusions obtained in \cref{ex:rank1} regarding the step of \( \lift \distr \) could also be obtained by applying \cref{prop:rank1}.
\end{remark}

\begin{remark}
The two implications \ref{item:surface3} $\Leftrightarrow $  \ref{item:surface2} in \cref{prop:rank1} do not hold if ${\mathrm{rank}\left(\diff\B_1,\diff\B_2\right)}\neq 1$ at $p\in \mathcal{Z}$. We illustrate this fact by showing two examples of horizontal magnetic fields $\B$ on the Heisenberg group $\mathbb{H}$ (for the notations we refer to \cref{ex:fromHtoEintro}).

The fact that \ref{item:surface3} $\not \Rightarrow $  \ref{item:surface2} can be realised by considering 
$\B=\frac{x^2}{2}\,dx\wedge\omega$.
The zero locus $\mathcal{Z}=\{x=0\}$ is a surface that does not contain characteristic points, but the step of ${\lift\distr}$ is $5$ at ${\mathcal{Z}\times \R}$, as showed in \cref{exa:r0}.
Here $\mathrm{rank} \restr{\left(\diff\B_1,\diff\B_2\right)}{\mathcal{Z}}=0$.

To observe that \ref{item:surface2} $\not \Rightarrow $  \ref{item:surface3} we consider
$\B=\left(z-\frac{xy}{2}\right)dx\wedge\omega + y\,dy\wedge\omega$ in the Heisenberg group.
The zero locus is the horizontal curve $\mathcal{Z}=\{z=y=0\}$, which is contained in the set of characteristic points of the surface $S_1=\{\B_1=0\}$.
However, since $\mathrm{rank} \restr{\left(\diff\B_1,\diff\B_2\right)}{\mathcal{Z}}=2$ the step of $\lift \distr$ at $\mathcal{Z}\times \R$ is $4$ (cf.\ \cref{lem:step-rank}).
\end{remark}

\section{Examples and final remarks}
\label{s:examples}
We conclude by considering two particular classes of examples.
\subsection{A class of magnetic fields having a given surface as zero locus} 
In the following, given a regular surface $\Sigma\subset M$, we consider a class of horizontal magnetic fields whose zero locus coincides with $\Sigma$.  This class contains in particular Example~\ref{exa:r0}.

\begin{lemma}
\label{lem:example}
Let $n\in\mathbb{N}$, $n\geq 1$ and $f\in C^{\infty}(M)$ be a submersion. Consider a horizontal magnetic field of the form
\begin{equation}
\B=\frac{f^n}{n!}(b_1\nu_1+b_2\nu_2)\wedge\omega\in \Omega^2_H,
\end{equation}
where $b_1,b_2\in C^\infty(M)$ with  $b_1^2+b_2^2 \neq 0.$
Then, the vector field $-b_2X_1+b_1X_2$ is tangent to the zero locus of $\B$ given by $\Sigma=f^{-1}(0)$.
Moreover, $\mathrm{Char}(\Sigma)$ is a 1-dimensional smooth manifold.
\end{lemma}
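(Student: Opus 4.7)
The plan is to establish, in order: (i) $\mathcal{Z}=\Sigma$, (ii) tangency of $V:=-b_2X_1+b_1X_2$ to $\Sigma$, and (iii) the smooth $1$-dimensional structure of $\mathrm{Char}(\Sigma)$. Steps (i) and (ii) will follow directly from Maxwell's equation \eqref{eq:maxwell}; step (iii) is the delicate one and will require differentiating Maxwell once more at characteristic points.

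For (i) and (ii): since $\B_1=\tfrac{f^n}{n!}b_1$, $\B_2=\tfrac{f^n}{n!}b_2$, $(b_1,b_2)\neq(0,0)$ and $n\geq 1$, the zero locus of $\B$ coincides with $\{f=0\}=\Sigma$. For tangency, I would apply \eqref{eq:maxwell} and expand by Leibniz \eqref{leibniz}:
\[
0=\mathrm{div}_\mu\!\left(\tfrac{f^n}{n!}\,V\right)=\tfrac{f^n}{n!}\,\mathrm{div}_\mu(V)+\tfrac{f^{n-1}}{(n-1)!}\,V(f).
\]
Dividing by $\tfrac{f^{n-1}}{(n-1)!}$ off $\Sigma$ and extending by continuity yields
\[
V(f)=-\tfrac{f}{n}\,\mathrm{div}_\mu(V)
\]
on all of $M$, which vanishes on $\Sigma$, so $V$ is tangent to $\Sigma$.

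For (iii): since $f$ is a submersion, $p\in\Sigma$ is characteristic iff $X_1f(p)=X_2f(p)=0$. The tangency identity on $\Sigma$ forces $(X_1f,X_2f)$ to be pointwise parallel to $(b_1,b_2)\neq(0,0)$, so I would introduce
\[
g:=\frac{b_1\,X_1f+b_2\,X_2f}{b_1^{2}+b_2^{2}}\in C^{\infty}(M),
\]
and note that $\mathrm{Char}(\Sigma)=\{p\in\Sigma:g(p)=0\}$. At each characteristic point $p$ the vector $V|_p$ is nonzero, horizontal, and tangent to $\Sigma$ (since $T_p\Sigma=\distr_p$), so it suffices to prove $V(g)|_p\neq 0$ to conclude that $g|_\Sigma$ is a submersion at $p$, and hence that $\mathrm{Char}(\Sigma)$ is a smooth $1$-dimensional submanifold of $\Sigma$.

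The main technical step will be the identity
\[
V(g)|_p=-X_0 f(p)\qquad\text{for }p\in\mathrm{Char}(\Sigma).
\]
To prove it, I would differentiate $V(f)=-\tfrac{f}{n}\mathrm{div}_\mu(V)$ at $p$; using $f(p)=X_1f(p)=X_2f(p)=0$, the $\nu_1$- and $\nu_2$-components force the ``horizontal Hessian columns'' $(X_1^{2}f,\,X_1X_2f)|_p$ and $(X_2X_1f,\,X_2^{2}f)|_p$ to be proportional to $(b_1,b_2)|_p$, namely $(X_1^{2}f,X_1X_2f)|_p=c_1(b_1,b_2)|_p$ and $(X_2X_1f,X_2^{2}f)|_p=c_2(b_1,b_2)|_p$ for some scalars $c_1,c_2$. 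Substituting these proportionalities into the direct expansion of $V(g)|_p$ gives $X_ig|_p=c_i$ and hence
\[
V(g)|_p=-b_2c_1+b_1c_2=-X_1X_2f+X_2X_1f=-[X_1,X_2]f|_p.
\]
Applying \eqref{eq:structurecoeff} and $X_1f(p)=X_2f(p)=0$ collapses the right-hand side to $-X_0f(p)$. Since $f$ is a submersion with vanishing horizontal differential at $p$, the Reeb derivative $X_0f(p)$ is nonzero, giving $V(g)|_p\neq 0$ and concluding the proof. The hard part is this final bookkeeping; conceptually it reflects the fact that the Rumin closure condition rigidly constrains the $2$-jet of $f$ along the characteristic locus.
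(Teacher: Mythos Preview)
Your proof is correct and follows essentially the same approach as the paper. The paper normalizes $b_1^2+b_2^2=1$, introduces the orthonormal frame $V_1=b_1X_1+b_2X_2$, $V_2=-b_2X_1+b_1X_2$, and shows that $\mathrm{Char}(\Sigma)=\Phi^{-1}(0)$ for $\Phi=(f,V_1f):M\to\R^2$ is a regular level set by computing $\mathrm{rank}\,d_p\Phi=2$; the key step there is the identity $V_2V_1f(p)=-X_0f(p)$, which is exactly your identity $V(g)|_p=-X_0f(p)$ after normalization, obtained in the same way by differentiating the Maxwell relation and using $[X_1,X_2]f|_p=X_0f(p)$.
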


\begin{proof}
We start by observing that, up to reabsorbing a non zero factor in the function $f$ (thus without changing the zero level set), we can assume that $b_1^2+b_2^2=1$.
Let us consider the orthonormal frame for $\distr$:
\begin{equation}
V_1= b_1X_1+b_2X_2, \qquad V_2 = -b_2X_1+b_1X_2,
\end{equation}
with its corresponding dual basis:
\begin{equation}
\eta_1= b_1\nu_1+b_2\nu_2, \qquad \eta_2=-b_2\nu_1+b_1\nu_2.
\end{equation}
In this basis, we have
\begin{equation}
\B=\frac{f^n}{n!}\eta_1\wedge\omega.
\end{equation}
{\bf Claim (i)}. For every $p\in \Sigma$ we have $V_2f(p)=0$, i.e., the vector field $V_{2}$ is tangent to $\Sigma=f^{-1}(0)$.

\smallskip
Let $\mu$ be the measure associated with the Popp's volume.
Recalling the closure condition \eqref{eq:maxwell}, and applying the Leibniz's rule for the divergence operator in \eqref{leibniz}, we obtain that  
\begin{equation}
0=\mathrm{div}_\mu\left(\frac{f^n}{n!}V_2\right)=\frac{f^{n-1}}{(n-1)!}\left(V_2f+\mathrm{div}_\mu(V_2)\frac{f}{n}\right).
\end{equation}
Therefore, on $M\setminus \Sigma$ (i.e., the set where $f\neq 0$) we have
\begin{equation}
\label{eq:closure}
V_2f+\mathrm{div}_\mu(V_2)\frac{f}{n}=0.
\end{equation} 
Since $\Sigma=f^{-1}(0)$ is a closed set with empty interior, identity \eqref{eq:closure} holds by continuity also on $\Sigma$ and since on $
\Sigma$ we have $f=0$, the Claim (i) is true.

\smallskip
To prove that $\mathrm{Char}(\Sigma)$ is a 1-dimensional smooth manifold, first notice that $\mathrm{Char}(\Sigma)\subset \Sigma$ is the zero level-set of the map $\Phi:M\to \R^2$ defined as $\Phi=(f,V_1f)$. 
Indeed, we have that $p\in \Sigma$ is characteristic if and only if  $V_1f(p)=V_2f(p)=0$ but  $p\in \Sigma$ implies that $V_2f(p)=0$ by \eqref{eq:closure}.

In order to conclude, we show that  for every $p\in \Phi^{-1}(0)$ it holds $\mathrm{rank}\, \diff _p\Phi=2$.
Let us consider the matrix representing $\diff \Phi$ with respect to the frame $V_1,V_2, X_0$ for $TM$:
\begin{equation}\label{eq:db130}
\diff \Phi =
\left( \begin{array}{ccc}
V_1f & V_2f &X_0f \\
V_1V_1f &V_2 V_1f & X_0 V_1f
\end{array} \right).
\end{equation}
{\bf Claim (ii):} Let $p\in \Phi^{-1}(0)$. We have that $V_2V_1f(p)= -X_0f(p)$.

\smallskip
To prove this claim, we first observe that,
differentiating \eqref{eq:closure}, one has
\begin{equation}
\label{eq:V_1V_2f}
V_1 V_2 f =  V_1 \left(-\frac{f}{n}\mathrm{div}_{\mu}(V_2)\right)=-\frac{V_2f}{n}\mathrm{div}_{\mu}(V_2)-\frac{f}{n}V_2\left(\mathrm{div}_{\mu}(V_2)\right).
\end{equation}
For $p\in  \Phi^{-1}(0)$, where $f=V_{2}f=0$, we deduce that $V_1V_2f(p)=0$.

On the other hand, being $V_1,V_2$ an orthonormal frame for $\distr$ with $\diff\omega(V_1,V_2)=-1$, by \eqref{eq:structurecoeff} there exist $a^1_{12},a^2_{12}\in C^\infty(M)$ such that 
\begin{equation}
[V_1,V_2]=a^1_{12}V_1+a^2_{12}V_2+X_0.
\end{equation}
Hence, we obtain that $[V_1,V_2]f(p)=X_0f(p)$ for all $p\in   \Phi^{-1}(0)$. 
Being $V_1V_2f(p)=0$, we deduce
\begin{equation}
\label{eq:commutatorscharS}
V_2V_1f(p)= -X_0f(p),
\end{equation} 
which proves Claim (ii). Replacing \eqref{eq:commutatorscharS} and $V_1V_2f(p)=0$ in  \eqref{eq:db130}, we have that
\begin{equation}
\diff_p \Phi =
\left( \begin{array}{ccc}
0 & 0 &X_0f(p) \\
V_1V_1f(p) &   -X_0f(p)
& X_0 V_1f(p)
\end{array} \right).
\end{equation}
Since, by assumption, $f$ is a submersion and $V_1f(p)=V_2f(p)=0$, then necessarily $X_0f(p)\neq 0$. Hence we conclude that $\mathrm{rank}\, \diff _p \Phi =2$, and this concludes the proof.
\end{proof}

We now compute the step of the lifted distribution $\lift\distr$ associated with the class of examples given in \cref{lem:example}.

\begin{lemma}
\label{lem:examplestep}
Under the  assumptions of \cref{lem:example}, it holds that 
\begin{enumerate}[(i)]
\item \label{item:3} The step of $\lift\distr$ at $(M \setminus \Sigma)\times \R$  is $3$;
\item \label{item:n+3} The step of $\lift\distr$ at $(\Sigma \setminus{ \mathrm{Char}(\Sigma)})\times\R$ is $n+3$;
\item \label{item:2n+3} The step of $\lift\distr$ at $ \mathrm{Char}(\Sigma)\times \R$ is $2n+3$.
\end{enumerate}
\end{lemma}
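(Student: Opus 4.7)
The plan is to apply \cref{lem:step} at each of the three strata after first diagonalizing $\B$ in the rotated orthonormal frame $V_1=b_1X_1+b_2X_2$, $V_2=-b_2X_1+b_1X_2$ used in the proof of \cref{lem:example} (normalizing $b_1^2+b_2^2=1$). In this frame the components of $\B$ simplify to $\B_1=f^n/n!$ and $\B_2=0$, and, since the conclusion of \cref{lem:step} is frame-independent, it suffices to control the horizontal derivatives of $f^n$. Part (i) is then immediate: on $M\setminus\Sigma$ the function $f$ does not vanish, so $\B$ is non-vanishing and \cref{lem:step3} gives step $3$.

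For parts (ii) and (iii) the core tool is the generalized Leibniz rule applied to $f^n$: a term in the expansion of $V_{i_1}\cdots V_{i_k}(f^n)(p)$ is indexed by a distribution $\phi\colon\{1,\ldots,k\}\to\{1,\ldots,n\}$ of the $k$ derivatives among the $n$ factors of $f$, and at a point $p\in\Sigma$ such a term vanishes whenever some factor receives no derivative (because $f(p)=0$), or receives exactly one derivative in a direction where $V_i f(p)=0$. At $p\in\Sigma\setminus\mathrm{Char}(\Sigma)$ Claim~(i) from the proof of \cref{lem:example} gives $V_2 f(p)=0$, so non-characteristicity forces $V_1 f(p)\neq 0$. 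A pigeonhole argument then shows that all $V$-derivatives of $f^n$ of order $k<n$ vanish at $p$, while for $k=n$ the choice $i_1=\cdots=i_n=1$ yields $V_1^n(f^n/n!)(p)=(V_1 f(p))^n\neq 0$, so by \cref{lem:step} the step is $n+3$.

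At $p\in\mathrm{Char}(\Sigma)$ both $V_1 f(p)$ and $V_2 f(p)$ vanish, so in the Leibniz expansion each factor of $f^n$ must absorb at least two derivatives, forcing all derivatives up to order $2n-1$ to vanish at $p$. To produce a non-vanishing term at order exactly $2n$ I will compute $V_2^n V_1^n(f^n/n!)(p)$. Any surviving Leibniz distribution assigns each factor a pair $(V_{i_{k_1}}, V_{i_{k_2}})$ with $k_1<k_2$; a pair $(V_2,V_2)$ contributes $V_2 V_2 f(p)=0$ (since by Claim~(i) of \cref{lem:example} the vector field $V_2$ is tangent to $\Sigma$ and $V_2 f$ vanishes on $\Sigma$), so by counting the $n$ available $V_2$'s across $n$ factors the only surviving distributions are those matching each $V_2$ with a $V_1$. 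There are $(n!)^2$ such distributions, each contributing $V_2 V_1 f(p)=-X_0 f(p)$ per factor by Claim~(ii) of \cref{lem:example}, giving
\begin{equation*}
V_2^n V_1^n\!\left(\tfrac{f^n}{n!}\right)\!(p)=n!\,(-X_0 f(p))^n\neq 0,
\end{equation*}
where the non-vanishing follows because $f$ is a submersion with vanishing horizontal gradient at $p$, forcing $X_0 f(p)\neq 0$. By \cref{lem:step} the step is $2n+3$.

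The main obstacle is the combinatorial bookkeeping at characteristic points: the specific ordering $V_2^n V_1^n$ must be chosen so that every surviving Leibniz term reduces to the computable and non-vanishing quantity $V_2 V_1 f(p)$ rather than to the a priori unknown $V_1 V_1 f(p)$, while simultaneously using Claim~(i) of \cref{lem:example} to rule out the $(V_2,V_2)$ contributions. Everything else reduces to the pigeonhole observation on the number of derivatives each factor can absorb.
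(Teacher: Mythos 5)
Your proof is correct and takes essentially the same route as the paper: pass to the rotated frame $V_1,V_2$ so that $\B_1=f^n/n!$, $\B_2=0$, run the Leibniz--pigeonhole count on horizontal derivatives of $f^n$ using $f=V_1f=V_2f=0$ at the relevant points together with Claims (i) and (ii) from \cref{lem:example}, and conclude via \cref{lem:step}. The only (harmless) deviation is the witness derivative of order $2n$ at characteristic points: the paper uses $(V_2V_1)^n\B_1=(-X_0f)^n$, while you compute $V_2^nV_1^n\B_1=n!\,(-X_0f)^n$, supported by the correct extra observation that $V_2V_2f$ vanishes on $\Sigma$.
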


\begin{proof}
We apply \cref{lem:step} with respect to the orthonormal frame $V_1,V_2$ for $\distr$ defined in the proof of \cref{lem:example}.
Notice that with this choice, we have $\B_1=\frac{1}{n!}f^{n}$ and $\B_2=0$. 
Item \ref{item:3} follows immediately since on $M \setminus \Sigma$ the magnetic field is non-vanishing.

Let $k\in \mathbb{N}$, $k\geq 1$ 
and  $({i_1},\ldots,{i_k})\in\{1,2\}^k$. Then

\begin{equation}
\label{eq:differentiation}
V_{i_k}\ldots V_{i_1}\B_1 
	= \frac{1}{n!}
\sum_	{\sigma\in D_{k,n}}		
	f^{e}
\prod_{j \colon r_{j}=1}
	V_{i_{\sigma^{-1}(j)}}f
\prod_{j \colon r_{j}>1}		
	V_{i_{h_{r_{j}}}}\ldots V_{i_{h_1}}f,
\end{equation}
where with
$D_{k,n}$ we denote the set of maps $\sigma:\{1\ldots,k\}\to \{1\ldots,n\}$
and for any $\sigma\in D_{k,n}$ and  $j\in \{1,\ldots,n\}$ we set $\sigma^{-1}(j)=\{h_1<\ldots<h_{r_{j}}\}$,  
	$r_{j}=\#\sigma^{-1}(j)$
	and $e=\#\{j\colon r_{j}=0\}$.

Item \ref{item:n+3}.\
We now focus on $\Sigma\times \R$.
For $ 1\leq k\leq n-1$ we have necessarily  $e\neq 0$ for every $\sigma$. 
Hence $f^e$ is vanishing on $\Sigma$ and we obtain that $\restr{V_{i_k}\ldots V_{i_1}\B_1}{\Sigma}=0$.

Let $k=n$.  We observe that $e=0$ if and only if $\sigma$ is a permutation of $\{1\ldots,n\}$. 
Therefore,  the sum in \eqref{eq:differentiation} evaluated on $\Sigma$ reduces to  the set of the permutations $P_{n}\subsetneq D_{n,n}$ of $\{1,\ldots, n\}$. 
And, recalling that $\sigma\in P_{n}$ is bijective, \eqref{eq:differentiation} on $\Sigma$ rewrites as
\begin{equation}
\label{eq:derivatives_n}
\restr{V_{i_n}\ldots V_{i_1}\B_1}{\Sigma}
=\frac{1}{n!}\sum_{\sigma\in P_{n}}
	\prod_{j =1}^n
		{V_{i_{\sigma^{-1}(j)}}f}|_{\Sigma}
=\prod_{j=1}^n{V_{i_j}f}|_{\Sigma},
\end{equation} 
Since by \cref{lem:example} one has $\restr{V_2f}{\Sigma}=0$, then it holds that 
\begin{equation}
\begin{cases}
V_1f(p)=0, & p\in \mathrm{Char}(\Sigma),\\
V_1f(p)\neq0, & p\in \Sigma\setminus \mathrm{Char}(\Sigma).
\end{cases}
\end{equation}
being $V_1,V_2$ a frame for $\distr$.
 Choosing $({i_1},\ldots,{i_n})=(1,\ldots,1)$ in \eqref{eq:derivatives_n} we get $V_1^n\B_1(p)\neq 0$, hence by \cref{lem:step} we have that $\lift\distr$ has step $n+3$ at $(\Sigma\setminus \mathrm{Char}(\Sigma))\times \R$.

Item \ref{item:2n+3}.\
We are left to compute the step of $\lift\distr $ at ${\mathrm{Char}(\Sigma)}\times\R$. 
Let now $n+1\leq k \leq 2n-1$ and $(i_1,\ldots,i_k)\in\{1,2\}^k$. 
Evaluating \eqref{eq:differentiation} at $p\in\mathrm{Char}(\Sigma)$, again any term associated with a $\sigma$ having $e\neq 0$ is vanishing because $f(p)=0$. 
Similarly, any term associated with a $\sigma$ having at least one $r_{j}=1$ vanishes since for $p\in \mathrm{Char}(\Sigma)$ one has $V_{1}f(p)=V_{2}f(p)=0$. 
Finally, it is not possible to find $\sigma\in D_{k,n}$ with $r_{j}\geq 2$ for all $j\in\{1,\ldots, n\}$ in \eqref{eq:differentiation}, due to $k< 2n$.
Thus, we have proved that given $(i_1,\ldots,i_k)\in\{1,2\}^k$ we have
\begin{equation}
\restr{ V_{i_k}\ldots V_{i_1}\B_1}{\mathrm{Char}(\Sigma)}=0, \qquad \text{for}\  n+1\leq k\leq 2n-1.
\end{equation}
This proves that the step of $\lift\distr $ is at least $2n+3$ at points of ${\mathrm{Char}(\Sigma)}\times\R$.
To conclude, we exhibit the following $2n$-th order horizontal derivative which is non-vanishing:
\begin{equation}
\restr{(V_2V_1)^{n}\B_1}{{\mathrm{Char}(\Sigma)}}=\left(V_2 V_1f\right)^n=\left(-X_0f\right)^n\neq 0,
\end{equation}
where in the second equality we used identity \eqref{eq:commutatorscharS}.
\end{proof}

\begin{remark}
Adapting the above argument, one can actually extend the conclusions (locally in a neighborhood of the zero level set) to horizontal magnetic fields of the form
\begin{equation}
\B=g(f)(b_1\nu_1+b_2\nu_2)\wedge\omega\in \Omega^2_H,
\end{equation}
where $b_1,b_2,f\in C^\infty(M)$ with $b_1^2+b_2^2=1$, and $g\in C^\infty(\R)$ satisfies $g(0)=0$ and  for $t\to 0$ 
\begin{equation}
	g(t)=\frac{t^n}{n!}(C+o(1))
\end{equation}  
with $n\geq 1$ and $C\neq 0$. 
This permits to analyze, for instance, magnetic fields in the Heisenberg group of the form $P(x)dx\wedge\omega$ where $P$ is a polynomial in $x$.
\end{remark}

\subsection{Abnormals extremal trajectories and zero locus of the magnetic field}

We consider different situations in which abnormal extremal trajectories do enter or do not enter the lifted set $\mathcal{Z}\times \R$.
Since, by Theorem~\ref{intro:nonvanishing}, abnormal extremal trajectories on $\lift M$ project onto $M$ on characteristic curves of $\beta$, we can restrict our analysis to checking on the manifold $M$ whether or not characteristic curves of $\beta$ enter into its zero set $\mathcal{Z}$.

All examples we consider here are built on $M=\mathbb{H}$ the Heisenberg group.
\begin{example}
The first example (cf.\ Example~\ref{ex:rank2}) is a horizontal magnetic field for which characteristic curves of the magnetic field do not enter the zero set $\mathcal{Z}\times \R$. 
Let us consider 
\begin{equation}
\B=x\,dx\wedge\omega+y\,dy\wedge\omega.
\end{equation}
We have that $\mathcal{Z}=\{x=y=0\}$ is the the $z$-axis. 
In the complement of $\mathcal{Z}$, characteristic curves for $\beta$ are tangent to the vector field
\begin{equation}
-yX_1+xX_2=-y\p_x+x\p_y+\frac{1}{2}(x^{2}+y^{2})\p_z.
\end{equation}

It is easy to see that characteristic curves for $\B$ are spirals contained in cylinders around the $z$-axis, never crossing $\mathcal{Z}$.
Since $\mathcal{Z}$ is transversal to $\distr$ at every point, we stress that abnormal curves in the lift of $\mathcal{Z}$ are constant curves.
\end{example}
\begin{example}
\label{ex:crossing0}
The second example we present is a horizontal magnetic field for which there exist characteristic curves of the magnetic field crossing the zero locus $\mathcal{Z}$. This produces a non-smooth abnormal extremal trajectory on $\lift M$.
Let us consider in $\mathbb{H}$
\begin{equation}
\B=y\,dx\wedge\omega+x\,dy\wedge\omega.
\end{equation}
We have that $\mathcal{Z}=\{x=y=0\}$ is again the the $z$-axis. 
In the complement of $\mathcal{Z}$, characteristic curves for $\beta$ are tangent to the vector field
\begin{equation}
-xX_1+yX_2=-x\p_x+y\p_y+xy\partial_{z}.
\end{equation}
It is easy to see that the curve $\gamma:[0,T]\to \mathbb{H}$ defined as
\begin{equation}
\gamma (t) = \begin{cases}
\gamma_{-}(t)=(\frac T 2 - t, 0,0), & t\in\left[0,\frac{T}{2}\right]\\
\gamma_{+}(t)=(0,t- \frac T 2,0),  & t\in\left(\frac{T}{2}, T\right]
\end{cases}
\end{equation}
is a characteristic curve for the magnetic field for $t\neq \frac{T}{2}$ crossing $\mathcal{Z}$ when $t= \frac{T}{2}$.
Moreover, since $\mathcal{Z}$ is transversal to $\distr$ at every point, then abnormal curves in the lift of $\mathcal{Z}$ are constant curves.

We observe that on $\lift M$, along the lift $\lift \gamma$ the step of the lifted sub-Riemannian structure  is equal to $3$ for $t\neq \frac{T}{2}$ and is equal to $4$ for $t= \frac{T}{2}$ (due to $\mathrm{rank}(d\beta_{1},d\beta_{2})=2$ at $\mathcal{Z}$, cf.\ Theorem~\ref{intro:step-rank}).

\end{example}

\begin{example}
\label{ex:crossing1}

The last example (cf.\ Example~\ref{ex:rank1}) is a horizontal magnetic field for which the zero set $\mathcal{Z}$ is a horizontal curve and there exists a characteristic curve of the magnetic field which enters into $\mathcal{Z}$ transversally.
Let us consider in $\mathbb{H}$ 
\begin{equation}
\B= 4z\,dx\wedge\omega+x^2\,dy\wedge\omega.
\end{equation}
We have that $\mathcal{Z}=\{x=z=0\}$ is  the $y$-axis, which is a horizontal curve. 
In the complement of $\mathcal{Z}$, characteristic curves for $\beta$ are tangent to the vector field
\begin{equation} \label{eq:dbvf}
-x^{2}X_1+4zX_2=-x^{2}\p_x+4z\p_y+\frac{x}{2}(xy+4z)\partial_{z}.
\end{equation}
Consider the same curve $\gamma:[0,T]\to \mathbb{H}$ as in the previous example
\begin{equation}
\gamma (t) = \begin{cases}
\gamma_{-}(t)=(\frac T 2 - t, 0,0), & t\in\left[0,\frac{T}{2}\right]\\
\gamma_{+}(t)=(0,t- \frac T 2,0),  & t\in\left(\frac{T}{2}, T\right]
\end{cases}
\end{equation}

We have that $\gamma_{-}(t)$ is a reparametrization of an integral curve of \eqref{eq:dbvf}, while $\gamma_{+}(t)$ is a horizontal curve contained in $\mathcal{Z}=\{x=z=0\}$.

Finally one can compute the step of the lifted distribution $\lift\distr$ at points of $\lift \gamma$ thanks to Theorem~\ref{intro:step-rank}. On the set $\mathcal{Z}$ one easily check that $d\beta_{2}=0$ while $d\beta_{1}\neq0$. The surface $\Sigma=\beta_{1}^{-1}(0)=\{z=0\}$ has a unique characteristic point at the origin.  

Hence one can conclude that:
\begin{itemize}
\item[(i)]  the step is equal to $3$ at $\lift\gamma(t)$ for $t\in [0,\frac{T}{2})$ since we are not in $\mathcal{Z}$, 
\item[(ii)] the step is equal to $5$ at $\lift\gamma(t)$ for $t=\frac{T}{2}$ since $\gamma(\frac{T}{2})$ is a characteristic point for $\Sigma$, 
\item[(iii)] the step is equal to $4$ at $\lift\gamma(t)$ for $t\in (\frac{T}{2},T]$ since $\gamma(t)$ is not a characteristic point for $\Sigma$.
\end{itemize}

\end{example}

\bibliography{Mag_SR.bib}

\end{document}